\numberwithin{equation}{section}
\newcommand{\R}{\ensuremath{\mathbb{R}}}
\newcommand{\Q}{\ensuremath{\mathbb{Q}}}
\newcommand{\N}{\ensuremath{\mathbb{N}}}
\newcommand{\cH}{\mathcal{H}}
\newcommand{\cC}{\mathcal{C}}
\newcommand{\cF}{\mathcal{F}}
\newcommand{\bP}{\mathbb{P}}
\newcommand{\Z}{\mathbb{Z}}
\newcommand{\ltn}{\ensuremath{\left| \! \left| \! \left|}}
\newcommand{\rtn}{\ensuremath{\right| \! \right| \! \right|}}
\newtheorem{theorem}{Theorem}[section]
{ \theorembodyfont{\normalfont}

\newtheorem{remark}[theorem]{Remark}
}
\newtheorem{definition}[theorem]{Definition}
\newtheorem{lemma}[theorem]{Lemma}
\newtheorem{proposition}[theorem]{Proposition}
\newcounter{enumctr}
\begin{document}
\title{Lyapunov spectrum of nonautonomous linear Young differential equations}
\author{
Nguyen Dinh Cong\thanks{Institute of Mathematics, Vietnam Academy of Science and Technology, Vietnam {\it E-mail: ndcong@math.ac.vn}}, $\;$ Luu Hoang Duc\thanks{Institute of Mathematics, Vietnam Academy of Science and Technology, \& Max-Planck-Institut f\"ur Mathematik in den Naturwissenschaften, Leipzig, Germany {\it E-mail: lhduc@math.ac.vn, duc.luu@mis.mpg.de}}, $\;$ Phan Thanh Hong \thanks{Thang Long University, Hanoi, Vietnam {\it E-mail: hongpt@thanglong.edu.vn }}\\[2ex]{\it in memory of V. M. Millionshchikov}}
\date{}
\maketitle

\begin{abstract}
	We show that a linear Young differential equation generates a topological two-parameter flow, thus the notions of Lyapunov exponents and Lyapunov spectrum are well-defined. The spectrum can be computed using the discretized flow and is independent of the driving path for triangular systems which are regular in the sense of Lyapunov. In the stochastic setting, the system generates a stochastic two-parameter flow which satisfies the integrability condition, hence the Lyapunov exponents are random variables of finite moments. Finally, we prove a Millionshchikov theorem stating that almost all, in a sense of an invariant measure, linear nonautonomous Young differential equations are Lyapunov regular. 
\end{abstract}

{\bf Keywords:}
Young differential equation, two parameter flow, Lyapunov exponent, Lyapunov spectrum, Lyapunov regularity, multiplicative ergodic theorem, Bebutov flow.

\section{Introduction}
In this article we study the Lyapunov spectrum of the nonautonomous linear Young differential equation (abbreviated by YDE)
\begin{equation}\label{lin1}
dx(t) = A(t)x(t) dt + C(t) x(t) d \omega(t),\ x(t_0)=x_0 \in \R^d, t\geq t_0,
\end{equation}
where $A, C$ are continuous matrix valued functions on $[0,\infty)$, and $\omega$ is a continuous path on $[0,\infty)$ having finite $p$-th variation on each compact interval of $[0,\infty)$, for some $p \in (1,2)$. Such system \eqref{lin1} appears, for instance, when considering the linearization of the autonomous Young differential equation 
\begin{equation}\label{SDE}
dy(t) = f(y(t))dt + g(y(t))d\omega(t)
\end{equation}
along any reference solution $y(t, y_0,\omega)$. An example is when we would like to solve in the pathwise sense stochastic differential equations driven by fractional Brownian motions with Hurst index $H \in (\frac{1}{2},1)$ defined on a complete probability space $(\Omega,{\cF},{\bP})$ \cite{nualart3}. In fact it follows from \cite{cassetal} that \eqref{SDE} under the stochastic setting also satisfies the integrability condition.

The equation \eqref{lin1} can be rewritten in the integral form
\begin{equation}\label{lin2}
x(t) = x_0 + \int_{t_0}^t A(s)x(s)ds + \int_{t_0}^t C(s)x(s)d\omega(s), \ t\geq t_0,
\end{equation} 
where the second integral is understood in the Young sense \cite{young}, which can also be presented in terms of fractional derivatives \cite{zaehle}. Under some mild conditions, the unique solution of \eqref{lin1} generates a two-parameter flow $\Phi_\omega(t_0,t)$, as seen in \cite{congduchong17}. Under a certain stochastic setting, \eqref{lin1} actually generates a stochastic two-parameter flow in the sense of Kunita \cite{kunita}.

Our aim is to study the Lyapunov exponents and Lyapunov spectrum of the linear two-parameter flow generated from Young equation \eqref{lin1}. Notice that Lyapunov spectrums and its splitting are the main content of the celebrated multiplicative ergodic theorem (MET) by Oseledets \cite{Ose68}. It was also investigated by Millionshchikov in \cite{Mil86,Mil87,mil68,Mil68} for linear nonautonomous differential equations. In the stochastic setting, the MET is also formulated for random dynamical systems in \cite[Chapter 3]{arnold}. Further investigations can be found in \cite{Cong1,cong,cong2004, congquynh09} for stochastic flows generated by nonautonomous linear stochastic differential equations driven by standard Brownian motion. \\
For Young equations, we show that Lyapunov exponents can be computed based on the discretization scheme. Moreover, if the driving path $\omega$ satisfies certain conditions, the Lyapunov spectrum can be computed independently of $\omega$ for {\it triangular systems} (i.e. both $A, C$ are upper triangular matrices) which are Lyapunov regular. 

One important issue is the non-randomness of Lyapunov exponents when the system is considered under a certain stochastic setting, namely if the driving path $\omega$ is a realization of a certain stochastic noise. In case the system is driven by standard Brownian noises, a filtration of independent $\sigma-$ algebras can be constructed and the argument of Kolmogorov's zero-one law can be applied to prove the non-randomness of Lyapunov exponents, which are measurable to tail events, see \cite{cong}. \\
In general, the stochastic noise might be a fractional Brownian motion which is not Markov, hence it is difficult to construct such a filtration and to apply the Kolmogorov's zero-one law. The question of non-randomness of Lyapunov spectrum is therefore still open. However, the answer is affirmative for some special cases. For example, autonomous and periodic systems can generate random dynamical systems satisfying the integrability condition, thus the Lyapunov spectrum is non-random by the multiplicative ergodic theorem \cite{arnold}. Our investigation shows that the Lyapunov spectrum of triangular systems that are Lyapunov regular are also non-random. In general, we expect that the statement of non-randomness of Lyapunov spectrum is still true for any Lyapunov regular system, although finding a counter-example of a nonautonomous system with random Lyapunov spectrum also attracts our interest.

The paper is organized as follow. In Section 2, we prove in Proposition  \ref{thm.lin.flow} the generation of a two-parameter flow from the unique solution of \eqref{lin1}. The concepts of Lyapunov exponents and Lyapunov spectrum of system \eqref{lin1} are then defined in Section 3. Under the assumptions on the driving path $\omega$ and on the coefficient functions, we prove in Theorem \ref{thm2} that Lyapunov spectrum can be computed using the discretized flow and give an explicit formula of the spectrum in Theorem \ref{triangularcase} in case of triangular systems which are regular in the sense of Lyapunov.  Theorem \ref{thm.L.reg.triangle} provides a criterion for a triangular system of YDE to be Lyapunov regular.
In Section 4, we consider the system under random perspectives in which the driving path acts as a realization of a stochastic stationary process in a function space equipped with a probabilistic framework. The system is then proved to generate a stochastic two-parameter flow which satisfies the integrability condition, hence the Lyapunov exponents are proved in Theorem \ref{LEbounded} to be random variables of finite moments.  Subsection 4.2 is devoted to study the regularity of the system, where we prove a Millionshchikov Theorem \ref{thm.Millionshchikov} stating that almost all, in a sense of an invariant measure, nonautonomous linear Young differential equations are Lyapunov regular. We end up with a discussion on the non-randomness of Lyapunov spectrum in some special cases, and raise this interesting question in general.

\section{Preliminary}

In this section we present some well-known facts of Young differential equations and two parameter flows. Let $0 \leq T_1 < T_2 < \infty$. Denote by $\cC([T_1,T_2], \R^{d\times d})$ the Banach space of continuous matrix-valued functions on $[T_1,T_2]$ equipped with the sup norm $\|\cdot\|_{\infty,[T_1,T_2]}$, by $\cC^{r{\rm-var}}([T_1,T_2],\R^d)$ the Banach space of bounded $r-$variation continuous functions on $[T_1,T_2]$ having values in $\R^d$ with the norm
 \begin{equation*}
 	\|u\|_{r{\rm-var},[T_1,T_2]}=|u(T_1)|+\ltn u \rtn_{r{\rm-var},[T_1,T_2]} <\infty,
 \end{equation*}
in which $|\cdot|$ is the Euclidean norm and $\ltn \cdot\rtn_{r{\rm-var},[T_1,T_2]}$ is the seminorm defined by 
 $$
 \ltn u\rtn_{r{\rm-var},[T_1,T_2]} = \left(\sup_{\Pi(T_1,T_2)}\sum_{i=0}^{n-1}|u(t_{i+1})-u(t_i)|^r\right)^{1/r},\quad  u\in \cC^{r\rm{-}\rm{var}}([T_1,T_2],\R^d),
 $$
 where the supremum is taken over the whole class of finite partition $\Pi (T_1,T_2)=\{ T_1= t_0<t_1<\cdots < t_n=T_2\}$ of $[T_1,T_2]$.  For each $0<\alpha<1$, we denote by $\cC^{\alpha\rm{-Hol}}([T_1,T_2],\R^d)$ the space of $\alpha-$H\"older continuous functions on $[T_1,T_2]$ equipped with the norm
 $$\|u\|_{\alpha,[T_1,T_2]}: =  \|u\|_{\infty,[T_1,T_2]} +\ltn u \rtn_{\alpha{\rm-Hol},[T_1,T_2]} ,$$ %|u(T_1)| + \sup_{T_1\leq s<t\leq T_2}\frac{|u(t)-u(s)|}{(t-s)^\alpha}.$$
 in which $\|u \|_{\infty,[T_1,T_2]}:= \sup_{t\in[T_1,T_2]}|u(t)|$  and $\ltn u \rtn_{\alpha{\rm-Hol},[T_1,T_2]} =  \sup_{T_1\leq s<t\leq T_2}\frac{|u(t)-u(s)|}{(t-s)^\alpha}$. It is obvious that for all $u\in \cC^{\alpha\rm{-Hol}}([T_1,T_2],\R^d)$,
 $$\ltn u \rtn_{r{\rm-var},[T_1,T_2]}\leq (T_2-T_1)^\alpha\ltn u \rtn_{\alpha{\rm-Hol},[T_1,T_2]},$$
 with $\alpha=1/r$. Moreover, we have the following estimate, whose proof follows directly from the definitions of the $p-$var seminorm and the sup norm and will be omitted here. 
 \begin{lemma}\label{qnorm-product}
 Let $t_0\geq 0$ and $T>0$ be arbitrary.
 	If $C\in \cC^{q{\rm-var}}([t_0,t_0+T],\R^{d\times d})$, $ x\in \cC^{q\rm{{\rm-var}}}([t_0,t_0+T],\R^{d})$, then for all $s<t$ in $[t_0,t_0+T]$,
 	\begin{eqnarray*}
 		\ltn Cx\rtn_{q{\rm-var},[s,t]}\leq \|C\|_{\infty,[s,t]}\ltn x\rtn_{q{\rm-var},[s,t]} + \|x\|_{\infty,[s,t]}\ltn C\rtn_{q{\rm-var},[s,t]}.
 	\end{eqnarray*}
 \end{lemma}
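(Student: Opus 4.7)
The plan is to reduce the statement to a pointwise product rule on each subinterval of a partition and then bundle the two resulting sums by Minkowski's inequality in $\ell^q$. Fix $[s,t]\subset [t_0,t_0+T]$ and an arbitrary partition $\Pi=\{s=t_0'<t_1'<\cdots<t_n'=t\}$. On each subinterval I would write the product increment as
\begin{equation*}
C(t_{i+1}')x(t_{i+1}')-C(t_i')x(t_i') \;=\; C(t_{i+1}')\bigl(x(t_{i+1}')-x(t_i')\bigr) + \bigl(C(t_{i+1}')-C(t_i')\bigr)x(t_i'),
\end{equation*}
which is the only algebraic identity needed. Taking Euclidean/operator norms and using sub-multiplicativity gives
\begin{equation*}
|C(t_{i+1}')x(t_{i+1}')-C(t_i')x(t_i')| \leq \|C\|_{\infty,[s,t]}\,|x(t_{i+1}')-x(t_i')| + \|x\|_{\infty,[s,t]}\,|C(t_{i+1}')-C(t_i')|.
\end{equation*}

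Next I would raise both sides to the $q$-th power, sum over $i$, take the $q$-th root, and apply the triangle inequality for the $\ell^q$ norm (Minkowski) to split the sum into the two contributions. The bounds $\|C\|_{\infty,[s,t]}$ and $\|x\|_{\infty,[s,t]}$ factor out of the respective sums, leaving
\begin{equation*}
\Bigl(\sum_{i=0}^{n-1}|C(t_{i+1}')x(t_{i+1}')-C(t_i')x(t_i')|^q\Bigr)^{1/q} \leq \|C\|_{\infty,[s,t]}\Bigl(\sum_i|x(t_{i+1}')-x(t_i')|^q\Bigr)^{1/q} + \|x\|_{\infty,[s,t]}\Bigl(\sum_i|C(t_{i+1}')-C(t_i')|^q\Bigr)^{1/q}.
\end{equation*}
Finally, taking the supremum over all partitions $\Pi$ of $[s,t]$ on both sides and using the definition of the $q$-variation seminorm yields the desired inequality; the supremum on the left is $\ltn Cx\rtn_{q\text{-var},[s,t]}$, while on the right each sum is individually bounded by the corresponding seminorm of $x$ and $C$.

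There is no real obstacle here; the only subtle point is that Minkowski is needed (rather than a naive $(a+b)^q\leq 2^{q-1}(a^q+b^q)$ estimate) in order to avoid a spurious constant and obtain the stated clean bound with coefficient one in front of each term. An alternative splitting $C(t_i')(x(t_{i+1}')-x(t_i'))+(C(t_{i+1}')-C(t_i'))x(t_{i+1}')$ works equally well and gives the same bound, which reflects that the estimate is symmetric in the two factors.
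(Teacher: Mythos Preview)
Your argument is correct and is exactly the direct computation from the definitions that the paper has in mind; indeed, the paper omits the proof entirely, stating only that it ``follows directly from the definitions of the $p$-var seminorm and the sup norm.'' Your use of the product-increment identity together with Minkowski's inequality in $\ell^q$ is the canonical way to make this precise, and your remark that Minkowski (rather than the convexity bound $(a+b)^q\le 2^{q-1}(a^q+b^q)$) is needed to avoid an extraneous constant is apt.
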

 Now, consider $x\in \cC^{q\rm{-}\rm{var}}([T_1,T_2],\R^{d\times m})$ and $\omega\in \cC^{p\rm{-}\rm{var}}([T_1,T_2],\R^m)$, $p,q \geq 1$ and $\frac{1}{p}+\frac{1}{q}  > 1$, the Young integral $\int_a^bx(t)d\omega(t)$ can be defined as 
 \[
 \int_a^bx(t)d\omega(t):= \lim \limits_{|\Pi| \to 0} \sum_{t_i\in \Pi} x(t_i)(\omega(t_{i+1})-\omega(t_i)), 
 \]
 where the limit is taken on all finite partitions $\Pi = \{T_1 = t_0 < t_1 < \ldots < t_n = T_2\}$ with $|\Pi| := \displaystyle\max_{0\leq i \leq n-1} |t_{i+1}-t_i|$ (see \cite[p. 264--265]{young}). This integral satisfies additive property by the construction, and the so-called Young-Loeve estimate \cite[Theorem 6.8, p. 116]{friz}
 \begin{equation}\label{YL0}
 	\Big|\int_s^t x(u)d\omega(u)-x(s)[\omega(t)-\omega(s)]\Big| \leq K \ltn x\rtn_{q{\rm-var},[s,t]} \ltn\omega\rtn_{p{\rm-var},[s,t]},
 \end{equation}
 where 
 \begin{equation}\label{constK}
 	K:=(1-2^{1-\theta})^{-1},\qquad \theta := \frac{1}{p} + \frac{1}{q} >1.
 \end{equation}

%%%%%%%%%%%%%%%%%%%%%%%%%%%%%%%%%%%%%%%%%%%%

Now for any $\omega\in \cC^{p{\rm-var}}([t_0,t_0+T],\R)$ with some $1< p<2 $, we consider the deterministic Young equation
\begin{equation}\label{linearfSDE}
x(t)=x_0 + \int_{t_0}^tA(s)x(s)ds+\int_{t_0}^tC(s)x(s)d\omega(s), 
\end{equation}
in which $A\in \cC([t_0,t_0+T],\R^{d\times d}),C\in \cC^{q{\rm-var}}([t_0,t_0+T],\R^{d\times d})$ with $ q > p$ and $\frac{1}{q}+\frac{1}{p}>1$. We first show that under mild conditions on coefficient functions $A,C$, \eqref{linearfSDE} has a unique solution in $ \cC^{q{\rm-var}}([t_0,t_0+T],\R^{d})$. 

\begin{proposition}\label{existence}
	Fix $[t_0,t_0+T]$ and consider $\omega$ varying as an element of the Banach space $\cC^{p{\rm-var}}([t_0,t_0+T])$. Assume that $A\in \cC([t_0,t_0+T],\R^{d\times d}),C\in \cC^{q{\rm-var}}([t_0,t_0+T],\R^{d\times d})$ with $q>p$ and $\frac{1}{q}+\frac{1}{p}>1$. Then equation \eqref{linearfSDE} has a unique solution $x (\cdot,t_0,x_{0},\omega)$  in the space $\cC^{p{\rm-var}}([t_0,t_0+T],\R^d)$ which satisfies
	\begin{alignat}{2}
		&(i)\;\; \|x(\cdot,t_0,x_0,\omega)\|_{\infty,[t_0,t_0+T]}\leq |x_0| \exp \Big\{\eta[2+ (\frac{2M^*}{\mu})^p(T^p+\ltn \omega\rtn^p_{p{\rm-var},[t_0,t_0+T]})]\Big\},\label{growth}\\
		&(ii)\;\; \ltn x(\cdot,t_0,x_0,\omega)\rtn_{p{\rm-var},[t_0,t_0+T]}\leq |x_0| \exp \Big\{(1+\eta)[3+ (\frac{2M^*}{\mu})^p(T^p+\ltn \omega\rtn^p_{p{\rm-var},[t_0,t_0+T]})]\Big\}\label{growth1}
	\end{alignat}
	where 
	\begin{equation}\label{M*}
		M^*=M^*(t_0,T):=\max\{\|A\|_{\infty,[t_0,t_0+T]}, 2K \| C\|_{q{\rm-var},[t_0,t_0+T]}\}<\infty,
	\end{equation}
	$K$ is defined in \eqref{constK}, $\mu$ is a constant such that $0<\mu<\min\{1,M^*\}$ and $\eta= -\log(1-\mu)$. In addition, the solution mapping
	\begin{eqnarray*}
		%X: [t_0,t_0+T]\times \R^d\times \cC^{p{\rm-var}}([t_0,t_0+T],\R)&\longrightarrow & \cC^{p{\rm-var}}([t_0,t_0+T],\R^d) \\
		%(a,x_0,\omega) &\mapsto& x(\cdot,a,x_0,\omega)
		X:  \R^d\times \cC^{p{\rm-var}}([t_0,t_0+T],\R)&\longrightarrow & \cC^{p{\rm-var}}([t_0,t_0+T],\R^d) \\
		(x_0,\omega) &\mapsto& x(\cdot,t_0,x_0,\omega).
	\end{eqnarray*}
	is continuous w.r.t $(x_0,\omega)$. 
\end{proposition}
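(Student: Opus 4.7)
The plan is to reduce \eqref{linearfSDE} to a sequence of small-interval problems via a greedy partition of $[t_0, t_0+T]$. On each small subinterval a Banach fixed-point argument in the $p$-variation norm gives local existence and uniqueness; these local solutions concatenate via the additive property of the Young integral into a global one, and counting the number of subintervals by means of a super-additive control function produces the explicit exponential bounds \eqref{growth}--\eqref{growth1}. Continuity of the solution map follows from the same contraction machinery applied to the difference of two solutions.

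\textbf{Local fixed point.} For a subinterval $[a,b]\subseteq [t_0,t_0+T]$ with prescribed starting value $y_a \in \R^d$, I introduce the affine operator
\[
\Psi(y)(t) := y_a + \int_a^t A(s) y(s)\, ds + \int_a^t C(s) y(s)\, d\omega(s)
\]
on the closed affine subset $\{y \in \cC^{p{\rm-var}}([a,b],\R^d): y(a)=y_a\}$. The Lebesgue integral contributes at most $(b-a) \|A\|_{\infty,[a,b]}$ to the $p$-variation (via the stronger $1$-variation bound); for the Young integral I combine the Young--Loeve estimate \eqref{YL0} with Lemma \ref{qnorm-product} and the embedding $\ltn\cdot\rtn_{q{\rm-var}}\leq \ltn\cdot\rtn_{p{\rm-var}}$ (valid since $p\leq q$). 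Using also $\|y_1-y_2\|_{\infty,[a,b]} \leq \ltn y_1-y_2 \rtn_{p{\rm-var},[a,b]}$ for functions that agree at $a$, a careful estimate yields
\[
\ltn \Psi(y_1)-\Psi(y_2)\rtn_{p{\rm-var},[a,b]} \leq M^* \bigl[(b-a)+\ltn\omega\rtn_{p{\rm-var},[a,b]}\bigr]\ltn y_1-y_2\rtn_{p{\rm-var},[a,b]},
\]
so $\Psi$ is a strict contraction whenever the bracket is at most $\mu/M^*$.

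\textbf{Greedy partition and global solution.} The function $\varphi(s,t) := (t-s)^p + \ltn\omega\rtn^p_{p{\rm-var},[s,t]}$ is a continuous, super-additive control, so for the target size $\kappa := (\mu/(2M^*))^p$ a standard greedy construction produces a partition $t_0=\tau_0<\cdots<\tau_N=t_0+T$ with $\varphi(\tau_i,\tau_{i+1})\leq\kappa$ on each piece and
\[
N \leq (2M^*/\mu)^p\bigl(T^p + \ltn\omega\rtn^p_{p{\rm-var},[t_0,t_0+T]}\bigr)+1.
\]
The elementary inequality $a+b\leq 2^{1-1/p}(a^p+b^p)^{1/p}$ guarantees $M^*[(\tau_{i+1}-\tau_i)+\ltn\omega\rtn_{p{\rm-var},[\tau_i,\tau_{i+1}]}]\leq \mu$ on each subinterval, so the local contraction applies, giving a unique solution on $[\tau_i,\tau_{i+1}]$ with prescribed value at the left endpoint. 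These local solutions glue together by the additive property of the Young integral into a unique global solution in $\cC^{p{\rm-var}}([t_0,t_0+T],\R^d)$.

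\textbf{Growth bounds and continuity.} Comparing the fixed point $x$ on $[\tau_i,\tau_{i+1}]$ with the constant function $x(\tau_i)$ through the above contraction estimate gives $\ltn x-x(\tau_i)\rtn_{p{\rm-var},[\tau_i,\tau_{i+1}]} \leq \frac{\mu}{1-\mu}|x(\tau_i)|$, hence $\|x\|_{\infty,[\tau_i,\tau_{i+1}]}\leq (1-\mu)^{-1}|x(\tau_i)|$. Iterating across the $N$ subintervals produces $\|x\|_{\infty,[t_0,t_0+T]} \leq (1-\mu)^{-N}|x_0| = e^{\eta N}|x_0|$, and inserting the bound on $N$ yields \eqref{growth}, with an analogous derivation producing \eqref{growth1} for the $p$-variation seminorm. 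For continuity, the difference $z := x(\cdot,t_0,x_0,\omega) - x(\cdot,t_0,x_0',\omega')$ satisfies an affine Young equation whose inhomogeneity involves $\omega-\omega'$ and already-bounded terms; applying the same greedy construction with respect to a joint control of $\omega$ and $\omega'$ yields a Gr\"onwall-type estimate of $\ltn z\rtn_{p{\rm-var}}$ by $|x_0-x_0'|$ and $\ltn\omega-\omega'\rtn_{p{\rm-var}}$, which delivers continuity of the solution map.

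The main technical difficulty is the precise bookkeeping that yields the explicit constants $M^*$ and $\eta$ in \eqref{growth}--\eqref{growth1}. One must carry both the sup-norm and the $p$-variation seminorm simultaneously through every step, and pass back and forth between additive (first-power) and $p$-th power aggregation of partition data via the pair of inequalities $a+b \leq 2^{1-1/p}(a^p+b^p)^{1/p}$ and $(a+b)^p \leq 2^{p-1}(a^p+b^p)$. Once these arithmetic manipulations are in place, the rest of the argument is routine.
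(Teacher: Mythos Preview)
Your proposal is correct and follows essentially the same strategy as the paper's proof: a local Banach fixed-point argument on each piece of a greedy partition, concatenation into a global solution, iteration of the per-step bound $\|x\|\leq(1-\mu)^{-1}|x(\tau_i)|$ across the $N$ pieces to obtain \eqref{growth}--\eqref{growth1}, and a difference-equation estimate for continuity. The only cosmetic differences are that the paper carries out the contraction in $\cC^{q{\rm-var}}$ (then reads off the $p$-variation regularity from \eqref{(i)}) rather than directly in $\cC^{p{\rm-var}}$, and defines the greedy times via the first-power equality $(\tau_k-\tau_{k-1})+\ltn\omega\rtn_{p{\rm-var},[\tau_{k-1},\tau_k]}=\mu/M^*$ (invoking \cite{congduchong17} for the bound on $N$) rather than via your $p$-th-power control; these lead to the same constants.
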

\begin{proof}
See the appendix.
\end{proof}

\begin{remark}
(i) Fix $[t_0,t_0+T]$, by considering the backward equation similar to that of \cite{congduchong17}, we can draw the same conclusions on the existence and uniqueness of the solution for the backward equation at an arbitrary point $a\in [t_0,t_0+T]$. Moreover, it can be proved that the solution mapping $X$ is continuous with respect to $(a,x_0,\omega) \in [t_0,t_0+T]\times \R^d\times \cC^{p{\rm-var}}([t_0,t_0+T],\R)$.\\
(ii) If $\omega\in \cC^{1/p\rm{-Hol}}([t_0,t_0+T],\R)\subset  \cC^{p{\rm-var}}([t_0,t_0+T],\R)$ then similar arguments prove that the solution is $1/p-$H\"older continuous and the solution mapping $X$ is continuous with respect to $(a,x_0,\omega) \in [t_0,t_0+T]\times \R^d\times \cC^{1/p\rm{-Hol}}([t_0,t_0+T],\R)$.
\end{remark}

For any $t_0\leq t_1\leq t_2\leq t_0+ T$ the {\em Cauchy operator} $\Phi_\omega(t_1,t_2): \R^d \rightarrow \R^d$ of the YDE \eqref{lin1} is defined as $\Phi_{\omega}(t_1,t_2)x_{t_1}:= x(t_2,t_1,x_{t_1},\omega)$ for any vector $x_{t_1}\in \R^d$.

Following \cite[p. 551]{arnold}, a family of mappings $X_{s,t} : \R^d \rightarrow \R^d$ depending on two real variables $s,t\in [a,b] \subset \R$ is called a {\em two-parameter flow of homeomorphisms of $\R^d$ on $[a,b]$} if the mapping $X_{s,t}$ is a homeomorphism on $\R^d$; $X_{s,s} = id$; $X_{s,t}^{-1} = X_{t,s}$ and $X_{s,t} = X_{u,t}\circ X_{s,u}$ for any $s,t,u\in [a,b]$. If in addition, $X_{s,t}$ is a linear operator for all $s,t \in [a,b]$, then the family $X_{s,t}$ is called a {\em two-parameter flow of linear operators of $\R^d$ on $[a,b]$}.

\begin{proposition}\label{thm.lin.flow}
	Suppose that the assumptions of  Proposition \ref{existence} are satisfied. Then the equation \eqref{lin1} generates a two-parameter flow of linear operators of $\R^d$ by means of its Cauchy operators.
\end{proposition}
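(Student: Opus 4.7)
The plan is to verify, for the family $\{\Phi_\omega(s,t)\}$ produced by Proposition \ref{existence}, the four defining properties of a two-parameter flow of linear operators: linearity in the state variable, the identity $\Phi_\omega(s,s) = \mathrm{id}$, the cocycle law $\Phi_\omega(s,t) = \Phi_\omega(u,t) \circ \Phi_\omega(s,u)$, and the invertibility $\Phi_\omega(s,t)^{-1} = \Phi_\omega(t,s)$. Continuity and bicontinuity are then automatic, since every linear bijection on $\R^d$ is a homeomorphism, so no extra work is needed for the topological aspect.

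I would start with linearity. Given $t_0 \leq t_1 \leq t_2$ and two initial vectors $x^1_{t_1}, x^2_{t_1} \in \R^d$, denote by $x^i(\cdot) := x(\cdot, t_1, x^i_{t_1}, \omega)$ the corresponding solutions in $\cC^{q\text{-var}}([t_1,t_2],\R^d)$. Because the maps $x \mapsto A(s)x$ and $x \mapsto C(s)x$ are linear and the Young integral is linear in its integrand, any affine combination $\alpha x^1 + \beta x^2$ again satisfies \eqref{linearfSDE} on $[t_1,t_2]$ with initial datum $\alpha x^1_{t_1} + \beta x^2_{t_1}$. The uniqueness part of Proposition \ref{existence} then forces $\Phi_\omega(t_1,t_2)\bigl(\alpha x^1_{t_1} + \beta x^2_{t_1}\bigr) = \alpha \Phi_\omega(t_1,t_2) x^1_{t_1} + \beta \Phi_\omega(t_1,t_2) x^2_{t_1}$. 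The relation $\Phi_\omega(s,s) = \mathrm{id}$ is a direct consequence of the initial condition.

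The cocycle property is again a uniqueness argument. For $t_0 \leq t_1 \leq t_2$, set $x(\cdot) := x(\cdot, t_0, x_{t_0}, \omega)$ on $[t_0,t_2]$. By the additivity of the Riemann and Young integrals, the restriction $x\big|_{[t_1,t_2]}$ solves \eqref{linearfSDE} on $[t_1,t_2]$ with initial value $x(t_1) = \Phi_\omega(t_0,t_1) x_{t_0}$, and by uniqueness coincides with $x(\cdot, t_1, \Phi_\omega(t_0,t_1) x_{t_0}, \omega)$. Evaluating at $t = t_2$ gives $\Phi_\omega(t_0,t_2) x_{t_0} = \Phi_\omega(t_1,t_2)\,\Phi_\omega(t_0,t_1) x_{t_0}$ for every $x_{t_0} \in \R^d$, which is the required cocycle identity.

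The most substantive step, and the one I expect to be the main obstacle, is invertibility, i.e.\ the construction of $\Phi_\omega(t_2,t_1)$ for $t_1 \leq t_2$ and the identification $\Phi_\omega(t_1,t_2)^{-1} = \Phi_\omega(t_2,t_1)$. For this I would invoke the backward counterpart of Proposition \ref{existence}, noted in the Remark immediately after it, which provides unique solvability of \eqref{lin1} from a terminal condition at any $a \in [t_0,t_0+T]$ and continuous dependence with respect to the data in the $p$-variation topology; this is delicate precisely because the Young integral must be interpreted consistently under time reversal on the class of driving paths of finite $p$-variation. Once the backward Cauchy operator $\Phi_\omega(t_2,t_1)$ is in hand, concatenating the forward and backward flows and applying the cocycle identity proved in the previous step yields $\Phi_\omega(t_2,t_1)\circ\Phi_\omega(t_1,t_2) = \Phi_\omega(t_1,t_1) = \mathrm{id}$ and symmetrically $\Phi_\omega(t_1,t_2)\circ\Phi_\omega(t_2,t_1) = \mathrm{id}$, establishing $\Phi_\omega(t_1,t_2)^{-1} = \Phi_\omega(t_2,t_1)$ and completing the verification of all flow axioms.
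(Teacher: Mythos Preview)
Your argument for linearity, the identity at coincident times, and the forward cocycle law is correct and matches the paper. The difference lies in the invertibility step, and here there is both a small gap in your reasoning and a genuine methodological divergence from the paper.

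The gap: you write that ``applying the cocycle identity proved in the previous step yields $\Phi_\omega(t_2,t_1)\circ\Phi_\omega(t_1,t_2) = \Phi_\omega(t_1,t_1)$''. But the cocycle identity you established is only for ordered triples $t_0 \leq t_1 \leq t_2$; it says nothing about composing a forward map with a backward one. What you actually need is a direct uniqueness argument: the forward trajectory $x(\cdot)$ on $[t_1,t_2]$ with $x(t_1)=x_{t_1}$ is itself a solution of the \emph{backward} problem started at $t_2$ with terminal value $x(t_2)$, and uniqueness for the backward equation then forces $\Phi_\omega(t_2,t_1)x(t_2)=x_{t_1}$. This is easy to supply once noticed, so your route can be completed, but the justification you wrote does not literally apply.

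The methodological difference: the paper avoids the backward equation entirely. Instead it introduces the \emph{adjoint} forward equation $d\Psi = -A^T\Psi\,dt - C^T\Psi\,d\omega$, $\Psi(t_0,t_0)=I$, and uses the integration-by-parts formula for Young integrals to show that $u(t):=\Psi_\omega(t_0,t)^T x(t)$ satisfies $du=0$, hence $\Psi_\omega(t_0,t)^T\Phi_\omega(t_0,t)=I$. One then \emph{defines} $\Phi_\omega(t,s):=\Psi_\omega(s,t)^T$ for $s\leq t$. This is the classical ODE device transported to the Young setting; its advantage is that it stays entirely within forward equations and yields the explicit formula $\Phi_\omega(s,t)^{-1}=\Psi_\omega(s,t)^T$, which the paper reuses later (e.g.\ in the proof of Theorem~\ref{thm2}). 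Your backward-equation approach is conceptually more direct and relies only on the existence/uniqueness machinery already flagged in the Remark after Proposition~\ref{existence}, but it does not produce this adjoint representation of the inverse.
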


\begin{proof}
	First note that the same method in the proof of Theorem \ref{existence} can be applied to prove the existence and uniqueness of solution $\Phi_\omega(t_0,t)$ of the  matrix-valued differential equation
	\begin{equation}\label{lin3}
	\Phi(t) = I + \int_{t_0}^t A(s)\Phi(s)ds + \int_{t_0}^t C(s)\Phi(s)d\omega(s),\ t \in [t_0,t_0+T].
	\end{equation}
	It is easy to show that the solution $\Phi_\omega(\cdot,\cdot):  \Delta^2 \to \R^{d\times d}$, with $\Delta^2 :=\{(s,t) \in [t_0,t_0+T] \times [t_0,t_0+T]: s\leq t\}$, has properties that  
	 $\Phi_\omega(s,s)= I_{d\times d}$ for all $s\geq 0$ and 
	\begin{equation}\label{stoTPF}
	\Phi_\omega(s,t)\circ \Phi_\omega(\tau,s) = \Phi_\omega(\tau,t),\quad \forall t_0\leq \tau\leq s \leq t \leq t_0+T. 
	\end{equation}
	The solution 
	$\Phi_\omega(\cdot,\cdot)$ is the mapping along trajectories of \eqref{linearfSDE} 
	 in forward time since YDE is directed. Like the ODE case, in our setting, the solution of the matrix equation \eqref{lin3} is the Cauchy operator of the vector equation \eqref{linearfSDE}.
	 
	Next, consider the adjoint matrix-valued pathwise differential equation  
	\begin{equation}\label{lin4}
	d\Psi(t_0,t) = -A^T(t) \Psi(t_0,t) dt -C^T(t) \Psi(t_0,t) d\omega(t) 
	\end{equation}
	with initial value $\Psi(t_0,t_0) =I$, and $A^T(\cdot), C^T(\cdot)$ are the transpose matrices of $A(\cdot)$ and $C(\cdot)$, respectively. By similar arguments we can prove that there exists a unique solution $\Psi_\omega(t_0,t)$ of \eqref{lin4}. Introduce the transformation $u(t) = \Psi_\omega(t_0,t)^T x(t)$. By the formula of integration by parts (see \cite[Proposition 6.12 and Exercise 6.13]{friz} or a fractional version in Z\"ahle \cite{zaehle}), we conclude that 
	\begin{eqnarray*}
		du(t) &=& [d\Psi_\omega(t_0,t)^T ] x(t) + \Psi_\omega(t_0,t)^T dx(t) \\
		&=&  [-\Psi_\omega(t_0,t)^T A(t) dt - \Psi_\omega(t_0,t)^T C(t) d\omega(t) ] x(t)+\Psi_\omega(t_0,t)^T [A(t)x(t) dt + C(t)x(t) d\omega(t)] \\
		&=& 0. 
	\end{eqnarray*}
	In other words, $u(t) = u(t_0)=x(t_0)=x_0$ or equivalently $\Psi_\omega(t_0,t)^T x(t) = x_0$. Combining with $\Phi$ in equation \eqref{lin3} we conclude that $\Psi_\omega(t_0,t)^T \Phi_\omega(t_0,t)x_0  = x_0$ for all $x_0 \in \R^d$, hence there exists $\Phi_\omega(t_0,t)^{-1}$ and $\Phi_\omega(t_0,t)^{-1} = \Psi_\omega(t_0,t)^T$. As a result, for any $x_0 \ne 0$ we have $\Phi_\omega(t_0,t)x_0 \ne 0 $ for all $t\geq t_0$. Thus we showed that the linear operator $\Phi_\omega(t_0,t)$, $t\geq t_0$, is nondegenerate. Similarly, for all  $t_0\leq s\leq t\leq t_0+T$ the operator $\Phi_\omega(s,t)$ is nondegenerate and $\Phi_\omega(s,t)^{-1} = \Psi_\omega(s,t)^T$. Putting $\Phi_\omega(t,s) := \Psi_\omega(s,t)^T$ for $t_0\leq s\leq t\leq t_0+T$ we have defined the family $\Phi_\omega(t,s)$ for all $s,t\in [t_0,t_0+T]$, and it is clearly a continuous two-parameter flow generated by \eqref{linearfSDE}.
\end{proof}
\begin{remark}
Using the solution formula for one dimensional system as in Section 3, one derives a Liouville - like formula as follow
$$\det\Phi_\omega(t_0,t) = \exp\left\{ \int_{t_0}^t {\rm trace\ } A(s)ds + \int_{t_0}^t {\rm trace\ } C(s)d\omega(s)\right\},$$
which also proves the invertibility of $\Phi_\omega(t_0,t)$.
\end{remark}
%%%%%%%%%%%%%%%%%%%%%%%%%%%%%%%%%%%%%%%%%%%%

\section{Lyapunov spectrum for nonautonomous linear system of YDEs}\label{Lyasec}
The classical Lyapunov spectrum of linear system of ordinary differential equations (henceforth abbreviated by ODEs) is a powerful tool in investigation of qualitative behavior of the system, see e.g. \cite{Bylov} or \cite{NeSt49}. Since \eqref{lin2} generates a two-parameter flow of homeomorphisms, we can instead study Lyapunov spectrum of the flow generated by the equation.
\subsection{Exponents and spectrum}
We aim to follow the technique in \cite{cong} and \cite{Mil86,Mil87}. From now on, let us consider the following assumptions on  the coefficients of \eqref{lin2}.\\

(${\textbf H}_1$)  $\hat{A}:=\|A\|_{\infty,\R^+} < \infty.$\\

(${\textbf H}_2$)  For some $\delta >0$, $\hat{C}:=\|C\|_{q{\rm-var},\delta,\R^+}:= \displaystyle\sup_{0\leq t-s \leq \delta} \|C\|_{q{\rm-var},[s,t]}< \infty$.\\
In (${\textbf H}_2$) we can assume, without loss of generality that $\delta = 1$. Put 
\begin{equation}\label{M0}
M_0:=\max\{\hat{A} , 2K\hat{C}\}
\end{equation}
where $K$ given by \eqref{constK}. It is obvious from \eqref{M*} that, for any $t_0\in\R^+$,
$$M^*(t_0,1)\leq M_0.$$
 
Note that conditions (${\textbf H}_1$), (${\textbf H}_2$) and Proposition \ref{existence} assure the existence and uniqueness of solution of \eqref{lin2} on $\R^+$. Moreover, Proposition \ref{thm.lin.flow} asserts that  \eqref{lin2} generates a two-parameter flow on $\R^d$ by means of its Cauchy operators $\Phi_\omega(\cdot,\cdot)$, and  $\Phi_\omega(s,t)x_0$ represents the value at time $t\in\R^+$ of the solution of \eqref{lin2} started at $x_0\in\R^d$ at time $s\in\R^+$. Following \cite{cong}, we introduce the notion of Lyapunov exponents of two-parameter flow of linear operators first, and then use it to define the Lyapunov exponents. We shall denote by ${\mathcal G}_k$ the Grassmannian manifold of all linear $k$-dimensional subspaces of $\R^d$.

Recall that for a real function $h: \R^+\rightarrow\R^d$ the {\em Lyapunov exponent of $h$} is the
number (which could be $\infty$ or $-\infty$)
$$
\chi(h(t)) := \limsup_{t\rightarrow\infty}\frac{1}{t}\log|h(t)|.
$$
(We make the convention that $\log$ is the logarithm of natural base and
$\log 0 := -\infty$.) 

\begin{definition}\label{dfn.LE2flow}
(i)\; Given a two-parameter flow $\Phi_\omega(s,t)$ of linear operators of $\R^d$ on the time interval $[t_0,\infty)$, the extended-real numbers (real numbers or symbol $\infty$ or $-\infty$)
\begin{equation}\label{eqn.LE.2flow}
\lambda_k(\omega) := \inf_{V\in {\mathcal G}_{d-k+1}} \sup_{y\in V} \limsup_{t\rightarrow \infty} \frac{1}{t} \log |\Phi_	\omega(t_0,t)y|,
\quad k=1,\ldots, d,
\end{equation}
are called  Lyapunov exponents of the flow $\Phi_\omega(s,t)$. The collection $\{ \lambda_1(\omega),\ldots, \lambda_d(\omega)\}$ is called  Lyapunov spectrum of the flow  $\Phi_\omega(s,t)$.\\
(ii)\; For any $u\in [t_0,\infty)$ the linear  subspaces of $\R^d$
\begin{equation}\label{eqn.flag.flow}
E_k^u(\omega) := \big\{ y\in\R^d\bigm| \limsup_{t\rightarrow\infty}
\frac{1}{t} \log |\Phi_\omega(u,t)y|  \leq \lambda_k(\omega) \big\},
\quad k=1,\ldots,d, \;\; 
\end{equation}
are called Lyapunov subspaces at time $u$ of the flow
$\Phi_\omega(s,t)$. The flag
of nonincreasing linear subspaces of $\R^d$
$$
\R^d = E_1^u(\omega) \supset E_2^u(\omega) \supset \cdots \supset
E_d^u(\omega) \supset \{0\}
$$
is called Lyapunov flag at time $u$ of the flow
$\Phi_\omega(s,t)$.\\
(iii)\; The  Lyapunov spectrum, Lyapunov exponents and Lyapunov subspaces of the linear YDE  \eqref{lin2} are those of the two-parameter flow $\Phi_\omega(s,t)$ generated by \eqref{lin2}.
\end{definition}
It is easily seen that the Lyapunov exponents in
Definition~\ref{dfn.LE2flow} are independent of $t_0$, and are ordered:
\[
\lambda_1(\omega) \geq \lambda_2(\omega) \geq \cdots \geq
\lambda_d(\omega), \qquad \omega\in\Omega.
\]
%==========================
Moreover, due to \cite[Theorems 2.5, 2.7, 2.8]{cong},  for any $u\in [t_0,\infty)$ and $k=1,\ldots, d$, the Lyapunov subspaces $E_k^u(\omega)$ are  invariant with respect to the flow in the following sense
\[
\Phi_\omega(s,t) E_k^s (\omega) = E_k^t(\omega),\qquad\hbox{for all}\;
s,t\in [t_0,\infty), k=1,\ldots,d.
\]
The classical definition of Lyapunov spectrum of a linear system of ODE is based on the normal basis of the solution of the system (see \cite{demidovich}). Millionshchikov \cite{Mil86} pointed out that these definitions are equivalent. In the following remark we restate some facts in \cite{demidovich}.

\begin{remark}\label{remLE}
(i) For every invertible matrix $B(\omega)$, the matrix $\Phi_\omega(t_0,t)B(\omega)$  satisfies $$\sum_{i=1}^d \alpha_i(\omega)\geq \sum_{i=1}^d\lambda_i(\omega)$$
	where $\alpha_i(\omega) $ is the Lyapunov exponent of its $i^{th}$ column.\\
(ii) Furthermore, we have Lyapunov inequality
	\[
	\sum_{i=1}^d\lambda_i \geq \displaystyle\limsup_{t\to \infty} \frac{1}{t}\log|\det\Phi_{\omega}(t_0,t)|.
	\]
Note that if Lyapunov exponents $\{\alpha_i(\omega), i=1,\dots,d\}$ of the columns of the matrix $\Phi_\omega(t_0,t)B(\omega)$ satisfy the equality $\sum_{i=1}^d\alpha(\omega) =\limsup\limits_{t\to\infty} \frac{1}{t}\log|\det\Phi_{\omega}(t_0,t)|$ then 
	  $\{\alpha_1(\omega),\dots, \alpha_d(\omega)\} $ is the spectrum of the flow $\Phi_{\omega}(s,t)$, i.e
	$$
	\{\alpha_i(\omega),i=1,\dots,d\} = \{\lambda_i(\omega),i=1,\dots,d\},
	$$
(but the inverse is not true).
\end{remark}
%=============================
%%%%%%%%%%%%%%%%%%%%%%%%%%%%
Now let us consider the following assumptions on the driving path $\omega$.\\

 (${\textbf H}_3$)  $\lim \limits_{n \to \infty\atop n\in\N} \frac{1}{n} \ltn \omega \rtn^p_{p-{\rm var},[n,n+1]} = 0.$\\
 
 (${\textbf H}_3^\prime$)  $\lim \limits_{n \to \infty\atop n\in\N} \frac{1}{n} \sum_{k =0}^{n-1}\ltn \omega \rtn^p_{p-{\rm var},[k,k+1]} = \Gamma_p(\omega) < \infty.$\\
 
It is easy to see that assumption (${\textbf H}_3^\prime$) implies (${\textbf H}_3$). We formulate below the first main result of this paper on the Lyapunov spectrum of equation \eqref{lin2}. 
\begin{comment}
Indeed,
 	\begin{eqnarray*}
 		&& \lim \limits_{n \to \infty} \frac{1}{n}  \ltn \omega \rtn^p_{p-{\rm var},[n,n+1]}\\
 		&=& \lim \limits_{n \to \infty} \frac{1}{n+1}  \ltn \omega \rtn^p_{p-{\rm var},[n,n+1]} \\
 		&=& \lim \limits_{n \to \infty} \Big( \frac{1}{n+1} \sum_{k=0}^{n} \ltn \omega \rtn^p_{p-{\rm var},[k,k+1]}-  \frac{1}{n} \sum_{k=0}^{n-1} \ltn \omega \rtn^p_{p-{\rm var},[k,k+1]} -  \frac{1}{n+1} \frac{1}{n}\sum_{k=0}^{n-1} \ltn \omega \rtn^p_{p-{\rm var},[k,k+1]}\Big)\\
 		&=& 0.
 	\end{eqnarray*}
 	
\end{comment}
 
%%%%%%%%%%%%%%%%%%%%%%%%%%%%

\begin{theorem}\label{thm2}
	Let $\Phi_{\omega}(s,t)$ be the two-parameter flow generated by \eqref{lin2} and $\{\lambda_1(\omega),\ldots,\lambda_d(\omega)\}$ be the Lyapunov spectrum of the flow $\Phi_{\omega}(s,t)$, hence of equation \eqref{lin2}. Then under assumptions (${\textbf H}_1$), (${\textbf H}_2$), (${\textbf H}_3$), %\eqref{omegaest}, 
	the Lyapunov exponents $\lambda_k(\omega), k=1,\ldots,d,$ can be computed via a discrete-time interpolation of the flow, i.e.
	\begin{equation}\label{est10} 
		\lambda_k(\omega) := \inf_{V \in \mathcal{G}_{d-k+1}} \sup_{y\in V} \limsup \limits_{\mathbb{N} \ni t\to \infty} \frac{1}{t} \log |\Phi_{\omega}(t_0,t)y|,\ k = 1,\ldots,d.
	\end{equation}
	In addition, if condition (${\textbf H}_3^\prime$) is satisfied, then 
	\begin{equation}\label{lyaestimate}
|\lambda_k(\omega)| \leq	\eta \Big[ 2 + \Big(\frac{2M_0}{\mu}\Big)^{p} (1+ \Gamma_p(\omega) )\Big], \quad \forall k = 1,\dots,d,	
	\end{equation}
	where $M_0$ is determined by \eqref{M0}, $0<\mu<\min\{1,M_0\}$ and $\eta= -\log(1-\mu)$.
\end{theorem}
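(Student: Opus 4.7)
The plan is to reduce the continuous–time $\limsup$ in \eqref{eqn.LE.2flow} to the integer–time $\limsup$ via the cocycle identity $\Phi_\omega(t_0,t) = \Phi_\omega(n,t)\circ \Phi_\omega(t_0,n)$ together with the uniform one–step growth estimate from Proposition \ref{existence}. Fix $y\in\R^d\setminus\{0\}$, $t\geq t_0$, and set $n=\lfloor t\rfloor$. Applying Proposition \ref{existence} on $[n,n+1]$ (where $M^*(n,1)\leq M_0$) to the matrix equation \eqref{lin3} yields
\begin{equation*}
\log\|\Phi_\omega(n,t)\|\leq a_n := \eta\Big[2+\Big(\tfrac{2M_0}{\mu}\Big)^{p}\big(1+\ltn\omega\rtn^{p}_{p\text{-var},[n,n+1]}\big)\Big].
\end{equation*}
Assumption $({\bf H}_3)$ gives $a_n/n\to 0$, and combining with $\log|\Phi_\omega(t_0,t)y|\leq a_n + \log|\Phi_\omega(t_0,n)y|$, dividing by $t$ and letting $t\to\infty$, I obtain $\limsup_{t\to\infty}\tfrac{1}{t}\log|\Phi_\omega(t_0,t)y|\leq \limsup_{n\to\infty,\,n\in\N}\tfrac{1}{n}\log|\Phi_\omega(t_0,n)y|$. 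The reverse inequality is trivial since $\N$ is a subsequence of $\R^{+}$. This pointwise equality is preserved by $\sup_{y\in V}$ and $\inf_{V\in\mathcal{G}_{d-k+1}}$, giving \eqref{est10}.

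For \eqref{lyaestimate} I would iterate the one–step estimate. Since the Lyapunov exponents are independent of $t_0$, take $t_0=0$ and write $\Phi_\omega(0,n)=\Phi_\omega(n-1,n)\circ\cdots\circ\Phi_\omega(0,1)$. Submultiplicativity of the operator norm combined with the previous bound gives
\begin{equation*}
\tfrac{1}{n}\log\|\Phi_\omega(0,n)\|\leq \eta\Big[2+\Big(\tfrac{2M_0}{\mu}\Big)^{p}\Big(1+\tfrac{1}{n}\sum_{k=0}^{n-1}\ltn\omega\rtn^{p}_{p\text{-var},[k,k+1]}\Big)\Big],
\end{equation*}
whose right–hand side converges under $({\bf H}_3')$ to $\eta[\,2+(2M_0/\mu)^{p}(1+\Gamma_p(\omega))\,]$. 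Since $\lambda_k\leq\lambda_1\leq \limsup_{n}\tfrac{1}{n}\log\|\Phi_\omega(0,n)\|$, this supplies the upper bound.

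The lower bound is obtained by applying the same procedure to the inverse flow. Proposition \ref{thm.lin.flow} gives $\Phi_\omega(t_0,t)^{-1}=\Psi_\omega(t_0,t)^{T}$, with $\Psi_\omega$ solving the adjoint equation \eqref{lin4}. That equation has the same structural form as \eqref{lin1} with coefficients $-A^{T}$ and $-C^{T}$, whose relevant norms coincide with those of $A,C$, so Proposition \ref{existence} produces an identical one–step bound and hence the same cumulative estimate on $\|\Phi_\omega(0,n)^{-1}\|$. From $|\Phi_\omega(0,n)y|\geq |y|/\|\Phi_\omega(0,n)^{-1}\|$ and \eqref{est10} I obtain $\lambda_k\geq\lambda_d\geq -\limsup_{n}\tfrac{1}{n}\log\|\Phi_\omega(0,n)^{-1}\|\geq -\eta[\,2+(2M_0/\mu)^{p}(1+\Gamma_p(\omega))\,]$, completing the two–sided bound.

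The main technical obstacle I anticipate is the bookkeeping in the first paragraph when $L:=\limsup_{n}\tfrac{1}{n}\log|\Phi_\omega(t_0,n)y|$ is negative: the crude inequality $\tfrac{n(L+\varepsilon)}{t}\leq L+\varepsilon$ fails for $t\in[n,n+1]$, so one must exploit that $n/t\in[n/(n+1),1]\to 1$ to upper–bound $\tfrac{n(L+\varepsilon)}{t}$ by $\tfrac{n(L+\varepsilon)}{n+1}$, which still tends to $L+\varepsilon$. A secondary but routine check is that Proposition \ref{existence} genuinely transfers to the adjoint equation with the \emph{same} constant $M_0$, since $\|A^{T}\|_{\infty}=\|A\|_{\infty}$ and $\ltn C^{T}\rtn_{q\text{-var}}=\ltn C\rtn_{q\text{-var}}$.
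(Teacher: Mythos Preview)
Your proposal is correct and follows essentially the same approach as the paper's proof: the one--step growth bound \eqref{growth} applied on $[n,n+1]$, combined with the flow property and $({\bf H}_3)$, reduces the continuous $\limsup$ to the integer one; then iteration of the one--step bound under $({\bf H}_3')$ gives the upper estimate, and the identical argument for the adjoint flow $\Psi_\omega$ gives the lower estimate. The only cosmetic difference is that the paper handles the first part by choosing a sequence $t_m$ realizing the continuous $\limsup$ and passing to $n_m=\lfloor t_m\rfloor$, which sidesteps the sign issue you flagged with $n/t\to 1$; your direct argument also works once that routine point is addressed.
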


\begin{proof}  
Recall from \eqref{growth} that for each $s\in \R^+$
\begin{eqnarray}\label{phiest-new}
\sup_{t\in [s,s+1]} \log \|\Phi_\omega(s,t)\| &\leq& \eta\Big[2+ (\frac{2M_0}{\mu})^p(1+\ltn \omega\rtn^p_{p{{\rm -var}},[s,s+1]})\Big].
\end{eqnarray}	

Fix $k\in\{1,\ldots,d\}$ and $y\in \R^d$.
Suppose  $0\leq t_0<t_1<t_2<t_3\cdots$ is an  increasing sequence of positive
real numbers on which the upper limit
$$
\limsup_{t\rightarrow\infty} \frac{1}{t} \log |\Phi_\omega(t_0,t)y| =:
z\in{\bar\R}
$$
is realized, i.e.,
$$
\lim_{m\rightarrow\infty} \frac{1}{t_m} \log |\Phi_\omega(t_0,t_m)y| = z.
$$
Let $n_m$ denotes the largest natural number which is smaller than or
equal to
$t_m$. Using the flow property of $\Phi_{\omega}(s,t)$ and 
assumption (${\textbf H}_3$) we have
\begin{eqnarray*}
z &=& \lim_{m\rightarrow\infty} \frac{1}{t_m} \log
|\Phi_\omega(t_0,t_m)y| \\
&=& \lim_{m\rightarrow\infty} \frac{1}{t_m} \log(
|\Phi_\omega(n_m,t_m)\Phi_\omega(t_0,n_m)y|) \\
&\leq& \lim_{m\rightarrow\infty} \frac{1}{t_m} \Big( \log
\|\Phi_\omega(n_m,t_m)\| + \log(|\Phi_\omega(t_0,n_m)y|)\Big) \\
&\leq& \limsup_{m\rightarrow\infty} \frac{1}{n_m} \log 
|\Phi_\omega(t_0,n_m)y| + \limsup_{m\rightarrow\infty} \frac{1}{n_m} \eta[2+ (\frac{2M_0}{\mu})^p(1+\ltn \omega\rtn^p_{p{{\rm -var}},[n_m,n_m+1]})]\\
&=&  \limsup_{m\rightarrow\infty}
\frac{1}{n_m}\log|\Phi_\omega(t_0,n_m)y|\\
&\leq&\limsup_{t\rightarrow\infty\atop t\in\N} \frac{1}{t} \log
|\Phi_\omega(t_0,t)y|.
\end{eqnarray*}
On the other hand, 
$$
\limsup_{t\rightarrow\infty\atop t\in\N} \frac{1}{t} \log
|\Phi_\omega(t_0,t)y| \leq
\limsup_{t\rightarrow\infty} \frac{1}{t} \log |\Phi_\omega(t_0,t)y| = z.
$$
Consequently, for all  $k\in\{1,\ldots,d\}$ and
$y\in \R^d$, we have the equality
$$
\limsup_{t\rightarrow\infty\atop t\in\N} \frac{1}{t} \log
|\Phi_\omega(t_0,t)y| =
\limsup_{t\rightarrow\infty} \frac{1}{t} \log |\Phi_\omega(t_0,t)y|,
$$
which proves \eqref{est10}.\\
Next, assume condition (${\textbf H}_3^\prime$) is satisfied. Then 
\begin{eqnarray}\label{integest}
\limsup_{n \to \infty} \frac{1}{n} \log
|\Phi_\omega(t_0,n)y|&\leq& \limsup_{n \to \infty} \frac{1}{n} \left( \log\|\Phi_\omega(t_0,\lceil t_0\rceil)\|  + \sum_{j=\lceil t_0\rceil}^{n-1}\log
\|\Phi_\omega(j,j+1)\| \right)\nonumber\\
&\leq& \limsup_{n \to \infty} \frac{1}{n} \sum_{j=0}^{n-1} \eta \Big[2+ \Big(\frac{2M_0}{\mu}\Big)^{p} \Big(1+ \ltn \omega \rtn_{p{\rm-var},[j,j+1]}^{p} \Big)\Big] \nonumber \\
&\leq& \eta \Big[ 2 + \Big(\frac{2M_0}{\mu}\Big)^{p} (1+ \Gamma_p(\omega)) \Big].
\end{eqnarray}
Since $\Phi_\omega(s,t) = (\Psi_\omega(s,t)^T)^{-1}$ where $\Psi$ is the solution matrix of the adjoint equation \eqref{lin4}, it follows that
\begin{eqnarray*}
	\limsup_{n \to \infty} \frac{1}{n} \log
	|\Phi_\omega(t_0,n)y|\geq \limsup_{n \to \infty} -\frac{1}{n} \log
	\|\Psi_\omega(t_0,n)\|= -\liminf_{n \to \infty} \frac{1}{n} \log
	\|\Psi_\omega(t_0,n)\|.
\end{eqnarray*}
Hence, either 
\[
0\leq \limsup_{n \to \infty} \frac{1}{n} \log
|\Phi_\omega(t_0,n)y| \leq \eta \Big[ 2 + \Big(\frac{2M_0}{\mu}\Big)^{p} (1+ \Gamma_p(\omega)) \Big]
\]
or 
\[
0\geq \limsup_{n \to \infty} \frac{1}{n} \log
|\Phi_\omega(t_0,n)y| \geq  -\liminf_{n \to \infty} \frac{1}{n} \log
\|\Psi_\omega(t_0,n)\|,
\]
which yields
\[
0\leq \liminf_{n \to \infty} \frac{1}{n} \log
\|\Psi_\omega(t_0,n)\| \leq \limsup_{n \to \infty} \frac{1}{n} \log
\|\Psi_\omega(t_0,n)\| \leq \eta \Big[ 2 + \Big(\frac{2M_0}{\mu}\Big)^{p} (1+ \Gamma_p(\omega)) \Big]
\]
where the last inequality can be proved similarly to the one in \eqref{integest}. Hence \eqref{lyaestimate} holds. 
\end{proof}

\begin{remark}\label{rem.dis.step}
The discretization scheme in Theorem \ref{thm2} can be formulated for any step size $h>0$.
\end{remark}

%%%%%%%%%%%%%%%%%%%%%%%%%%%%%%%%%%%%5
\subsection{Lyapunov spectrum of triangular systems}
It is well known in the theory of ODE that a linear triangular system can be solved successively and its Lyapunov spectrum is easily computed via its coefficients. In this subsection we present our similar result for linear triangular systems of YDE, under addition assumptions. Let us consider the system
\begin{eqnarray}\label{trianglesys}
dX(t) = A(t)X(t)dt+C(t)X(t)d\omega(t)
\end{eqnarray}
in which, $X= (x_1,x_2,...,x_d)$, $A = (a_{ij}(t)),C=(c_{ij}(t))$ are $d$ dimensional upper triangular matrices of coefficient functions satisfying conditions  (${\textbf H}_1$), (${\textbf H}_2$), the driving path $\omega $ satisfies (${\textbf H}_3$) and also the additional assumption\\

(${\textbf H}_4$)  $\displaystyle\lim\limits_{n\to \infty \atop n \in \N} \dfrac{\Big|\int_0^n c_{ii}(s)d\omega(s)\Big|}{t} = 0$  for any elements $c_{ii}(t),\; i=1,\dots, d$ in the diagonal of $C$.\\

As a motivation of our ideas, (${\textbf H}_4$) is satisfied for almost all realization $\omega$ of  a fractional Brownian motion  (see Lemma \ref{lemma2a} in Section~\ref{sec.appendix} for the proof and \cite{mishura} for details on fractional Brownian motions). Another situation satisying (${\textbf H}_4$)  is the case in which $\omega(t) = t^{\alpha}$ with $0<\alpha<1$ and $C(\cdot)$ is continuous and bounded.\\
To see how assumption (${\textbf H}_4$) is applied, we first consider equation  \eqref{trianglesys} in the  one dimensional case
\begin{equation}\label{1dimsystem}
dz(t) = a(t)z(t)dt + c(t)z(t)d\omega(t), \; z(0) = z_0.
\end{equation}
Thanks to the integration by part formula (see Z\" ahle~\cite[Theorem 3.1]{zaehle}), \eqref{1dimsystem} can be solved explicitly as 
	\begin{equation}\label{solution1}
	z(t) = z_0e^{\int_{0}^ta(s)ds+\int_{0}^tc(s)d\omega(s)}.
	\end{equation}
Moreover, we have the following lemma.
%==============================
\begin{lemma}\label{lemma13}
	The following estimates hold  for any nontrivial solution $z\not\equiv 0$ of \eqref{1dimsystem} 
	\\
	$(i)\;\chi(z(t)) = \overline{a}$,  
	\\
	$(ii)\;\chi(\ltn z\rtn_{q{\rm-var},[t,t+1]}) \leq \overline{a}$,\\
	where $\overline{a}:=\displaystyle \limsup_{n\to\infty\atop n\in\N}\frac{1}{n} \int_0^na(s)ds $.
\end{lemma}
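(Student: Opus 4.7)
The plan is to use the explicit solution formula \eqref{solution1},
\[
\log|z(t)| \;=\; \log|z_0| + \int_0^t a(s)\,ds + \int_0^t c(s)\,d\omega(s),
\]
combined with Proposition~\ref{existence} applied on the unit-length interval $[t,t+1]$. The recurring technical issue will be the passage from the integer-time hypotheses (${\textbf H}_3$), (${\textbf H}_4$) to arbitrary continuous $t$.

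For part (i), I would prove two Ces\`aro-type statements separately:
(a) $\limsup_{t\to\infty}\tfrac{1}{t}\int_0^t a(s)\,ds = \bar a$ and
(b) $\lim_{t\to\infty}\tfrac{1}{t}\int_0^t c(s)\,d\omega(s) = 0$.
Statement (a) is immediate from boundedness of $a$ under (${\textbf H}_1$): with $n=\lfloor t\rfloor$ one has $\bigl|\tfrac{1}{t}\int_0^t a(s)\,ds - \tfrac{1}{n}\int_0^n a(s)\,ds\bigr| \leq 2\hat A/t$, so the continuous $\limsup$ coincides with the integer-time one $\bar a$. For (b), (${\textbf H}_4$) gives $\tfrac{1}{n}\bigl|\int_0^n c\,d\omega\bigr|\to 0$, while the fractional remainder $\bigl|\int_n^t c(s)\,d\omega(s)\bigr|$ is controlled via the Young--Loeve estimate \eqref{YL0} (together with Lemma~\ref{qnorm-product} and (${\textbf H}_1$), (${\textbf H}_2$)) by a constant multiple of $\ltn\omega\rtn_{p{\rm-var},[n,n+1]}$, which is $o(t)$ because (${\textbf H}_3$) yields $\ltn\omega\rtn^p_{p{\rm-var},[n,n+1]}/n\to 0$ and hence $\ltn\omega\rtn_{p{\rm-var},[n,n+1]}/n\to 0$. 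Combining (a), (b) and the vanishing term $\log|z_0|/t$ gives $\chi(z(t))=\bar a$.

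For part (ii), I would apply Proposition~\ref{existence} to equation \eqref{1dimsystem} on $[t,t+1]$ with initial datum $z(t)$. Using $M^*(t,1)\leq M_0$, estimate \eqref{growth1} becomes
\[
\ltn z\rtn_{p{\rm-var},[t,t+1]} \leq |z(t)|\exp\!\Bigl\{(1+\eta)\Bigl[3+\Bigl(\tfrac{2M_0}{\mu}\Bigr)^p\bigl(1+\ltn\omega\rtn^p_{p{\rm-var},[t,t+1]}\bigr)\Bigr]\Bigr\}.
\]
The elementary inequality $\|\cdot\|_{\ell^q}\leq \|\cdot\|_{\ell^p}$ for $q\geq p$, applied to the partition sums defining the $p$- and $q$-variation seminorms, gives $\ltn z\rtn_{q{\rm-var},[t,t+1]}\leq \ltn z\rtn_{p{\rm-var},[t,t+1]}$. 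Taking logarithms, dividing by $t$, and invoking part (i), the proof reduces to $\ltn\omega\rtn^p_{p{\rm-var},[t,t+1]}/t\to 0$ for every real $t\to\infty$. For $t\in[n,n+1]$ this follows from $\ltn\omega\rtn^p_{p{\rm-var},[t,t+1]} \leq \ltn\omega\rtn^p_{p{\rm-var},[n,n+2]} \leq 2^{p-1}\bigl(\ltn\omega\rtn^p_{p{\rm-var},[n,n+1]}+\ltn\omega\rtn^p_{p{\rm-var},[n+1,n+2]}\bigr)$ (the last step a standard splitting of $p$-variation across two adjacent unit intervals obtained by inserting $n+1$ into any partition of $[n,n+2]$) together with (${\textbf H}_3$).

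The only substantive obstacle is this discrete-to-continuous extension of (${\textbf H}_3$) and (${\textbf H}_4$); once it is handled by the two splittings above, both assertions reduce to termwise estimation of the explicit solution \eqref{solution1}.
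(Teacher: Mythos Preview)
Your proof is correct. For part (i) your argument coincides with the paper's, both reading off the explicit formula \eqref{solution1}; you are simply more explicit than the paper about why the continuous-time $\limsup$ agrees with the integer-time one (the paper tacitly relies on the discretization result Theorem~\ref{thm2}).

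For part (ii) you take a genuinely different route. The paper writes $z=e^{f}e^{g}$ with $f(t)=\int_0^t a\,ds$, $g(t)=\int_0^t c\,d\omega$, then estimates $\ltn e^{f}\rtn_{q{\rm-var},[t_0,t_0+1]}$ and $\ltn e^{g}\rtn_{q{\rm-var},[t_0,t_0+1]}$ separately via the mean value theorem (e.g.\ $\ltn e^{f}\rtn_{q{\rm-var}}\le \|e^{f}\|_\infty\ltn f\rtn_{q{\rm-var}}$), and combines these with the pointwise bounds $\chi(e^{f})=\bar a$, $\chi(e^{g})=0$ to obtain $\ltn z\rtn_{q{\rm-var},[t_0,t_0+1]}\le D e^{(\bar a+\varepsilon)t_0}$. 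You instead invoke the a~priori growth estimate \eqref{growth1} of Proposition~\ref{existence} as a black box on each unit interval, followed by the monotonicity $\ltn\cdot\rtn_{q{\rm-var}}\le\ltn\cdot\rtn_{p{\rm-var}}$ for $q>p$. Your approach is shorter and reuses existing machinery; the paper's hands-on decomposition makes the dependence on the two pieces $e^{f}$ and $e^{g}$ transparent, but this finer structure is not actually needed downstream. Both arguments reduce, as you note, to the same technical point---extending $({\textbf H}_3)$ from integers to real $t$---and your splitting $\ltn\omega\rtn^p_{p{\rm-var},[n,n+2]}\le 2^{p-1}\bigl(\ltn\omega\rtn^p_{p{\rm-var},[n,n+1]}+\ltn\omega\rtn^p_{p{\rm-var},[n+1,n+2]}\bigr)$ handles it correctly (the paper phrases the same fact as ``$({\textbf H}_3)$ implies the boundedness of $\ltn\omega\rtn_{p{\rm-var},[t_0,t_0+1]}/t_0$'').
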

\begin{proof}
$(i)$ The statement is evident under the assumption (${\textbf H}_4$). Namely,\\ 
	\begin{eqnarray*}
		\chi(z(t))
		&=&\limsup_{n\to \infty \atop n\in \N}\left( \frac{\log |z_0|}{n}+\frac{\int_0^na(s)ds}{n}+\frac{\int_0^nc(s)d\omega(s)}{n}\right)= \limsup_{n\to \infty \atop n\in \N}\frac{\int_0^na(s)ds}{n}=\overline{a}.
	\end{eqnarray*} 

	$(ii)$ Due to linearity it suffices to prove for $z_0=1$. Introduce the notations  $f(t)= \int_0^ta(s)ds,\;\; g(t) = \int_0^tc(s)d\omega(s)$, then  $z(t) = e^{f(t).g(t)}$.
	We have 
	$$
	\ltn f\rtn_{q{\rm-var},[s,t]}\leq (t-s)\|a\|_{\infty,[s,t]},\;  \ltn g\rtn_{q{\rm-var},[s,t]}\leq K \|c\|_{q{\rm-var},[s,t]}\ltn\omega\rtn_{p{\rm-var},[s,t]},\; \hbox{for all} \;\; 0\leq s<t;
	$$
	  and
	   $$
	   \chi(e^{f(t)})=\overline{a}, \;\; \chi(e^{g(t)}) = 0. 
	   $$
	For given $\varepsilon>0$, there exists $D_1$ such that
	$$
	e^{f(s)}< D_1e^{(\overline{a}+\varepsilon/3)s}, \;\; e^{g(s)}<D_1 e^{\varepsilon s /3},\; \forall s\geq 0.
	$$
	Hence, for any $t_0\geq 0$, the estimates
	$$
	\|e^{f}\|_{\infty,[t_0,t_0+1]}\leq D_2e^{(\overline{a}+\varepsilon/3)t_0};\;\;\|e^{g}\|_{\infty,[t_0,t_0+1]}\leq D_2 e^{\varepsilon t_0/3}
	$$
	hold for $D_2=\max\{D_1, D_1e^{\overline{a}+\varepsilon/3}\}$. On the other hand, by the mean value theorem and the continuity of $f$, for any $s,t\in [t_0,t_0+1]$,
	\begin{eqnarray*}
		|e^{f(t)}-e^{f(s)}|&=& e^{f(\xi)} |f(t)-f(s)|, \;\; \xi\in [s,t]\\
		&\leq &\|e^{f}\|_{\infty,[t_0,t_0+1]} |f(t)-f(s)|\leq \|e^{f}\|_{\infty,[t_0,t_0+1]} \ltn f\rtn_{q{\rm-var},[s,t]},
	\end{eqnarray*}
	which yields $$\ltn e^f\rtn_{q{\rm-var},[t_0,t_0+1]}\leq \|e^{f}\|_{\infty,[t_0,t_0+1]} \ltn f\rtn_{q{\rm-var},[t_0,t_0+1]}.$$
	Similarly, 
	\[\ltn e^g\rtn_{q{\rm-var},[t_0,t_0+1]}\leq \|e^{g}\|_{\infty,[t_0,t_0+1]} \ltn g\rtn_{q{\rm-var},[t_0,t_0+1]}.
	\]
	For $s,t\in [t_0,t_0+1]$,
	\begin{eqnarray*}
		|z(t)-z(s)|&=& |e^{f(t)g(t)}-e^{f(s)g(s)}|\\
		&\leq & e^{f(t)}|e^{g(t)}-e^{g(s)}|+ e^{g(s)}|e^{f(t)}-e^{f(s)}|\\
		&\leq & \|e^{f}\|_{\infty,[t_0,t_0+1]}\|e^{g}\|_{\infty,[t_0,t_0+1]} \left(\ltn f\rtn_{q{\rm-var},[s,t]}+\ltn g\rtn_{q{\rm-var},[s,t]}\right),
	\end{eqnarray*}
	hence by using Minkowski inequality we get
	\begin{eqnarray*}
		\ltn z\rtn_{q{\rm-var},[t_0,t_0+1]}&\leq & \|e^{f}\|_{\infty,[t_0,t_0+1]}\|e^{g}\|_{\infty,[t_0,t_0+1]} \left(\ltn f\rtn_{q{\rm-var},[t_0,t_0+1]}+\ltn g\rtn_{q{\rm-var},[t_0,t_0+1]}\right)\\
		&\leq& D_2^2e^{(\overline{a}+2\varepsilon/3)t_0} (\|a\|_{\infty,\R^+}+K\|c\|_{q{\rm-var},1,\R^+}\ltn\omega\rtn_{p{\rm-var},[t_0,t_0+1]}).
	\end{eqnarray*}
Note that condition (${\textbf H}_3$) implies  the boundedness of  $\frac{\ltn\omega\rtn_{p{\rm-var},[t_0,t_0+1]}}{t_0}$, $t_0\in \R^+$.  Therefore, there exists a constant  $D_3$ such that
$$
\ltn z\rtn_{q{\rm-var},[t_0,t_0+1] }\leq D_3 e^{(\overline{a}+\varepsilon)t_0},
$$
which proves (ii). 
\end{proof}
\medskip 

%===============================
Next we will show by induction that the Lyapunov spectrum of system \eqref{trianglesys} is $\{\overline{a}_{kk}, 1\leq k\leq d\}$ with $\overline{a}_{kk}:=\lim\limits_{t\to\infty}\frac{\int_0^ta_{kk}(s)ds}{t}$, provided that the limit is well-defined and exact.

The following lemma is a modified version of Demidovich~\cite[Theorem 1, p. 127]{demidovich}.
\begin{lemma}\label{lemma14}
  Assume that $g^i:\R^+\rightarrow  \R$, $i=1,\ldots, n$, are continuous functions of finite $q$-variation norm on any compact interval of $\R^+$, which satisfy
    $$
    \chi(g^i(t)),\;\chi(\ltn g^i\rtn_{q{\rm-var},[t,t+1]})\leq \lambda_i\in \R, \; i=1,\dots ,n.
    $$  
    Then 
    
    \noindent
$(i)\;$ $ \chi (\sum_{i=1}^n g^i(t)),\;\chi\left(\ltn \sum_{i=1}^n g^i\rtn_{q{\rm-var},[t,t+1]}\right) \leq \max_{1\leq i\leq n}\lambda_i,$\\
$(ii)\;$ $ \chi (\prod_{i=1}^n g^i(t)),\;\chi\left(\ltn \prod_{i=1}^n g^i\rtn_{q{\rm -var},[t,t+1]}\right) \leq \sum_{i=1}^n\lambda_i.$

\end{lemma}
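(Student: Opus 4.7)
The plan is to translate each Lyapunov-exponent hypothesis into an explicit exponential envelope valid for every $t\ge 0$, and then to combine these envelopes by the triangle inequality for sums and by Lemma \ref{qnorm-product} for products. Fix an arbitrary $\eps > 0$. From the hypotheses on $g^i$ I can extract a single constant $M=M(\eps)$ with
\[
|g^i(t)| \le M e^{(\lambda_i+\eps)t}, \qquad \ltn g^i\rtn_{q{\rm-var},[t,t+1]} \le M e^{(\lambda_i+\eps)t}
\]
for every $i=1,\dots,n$ and every $t\ge 0$. Since for $s\in[t,t+1]$ one has $|g^i(s)|\le |g^i(t)|+\ltn g^i\rtn_{q{\rm-var},[t,s]}$, these two envelopes combine into the supplementary estimate $\|g^i\|_{\infty,[t,t+1]} \le 2M e^{(\lambda_i+\eps)t}$, which is the auxiliary fact that will let exponential rates propagate through products.

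For part (i), I would apply the triangle inequality to $\sum_{i=1}^n g^i(t)$ pointwise, together with its analogue for the $q$-variation seminorm $\ltn \sum_{i=1}^n g^i \rtn_{q{\rm-var},[t,t+1]} \le \sum_{i=1}^n \ltn g^i \rtn_{q{\rm-var},[t,t+1]}$ (which is valid since the $q$-variation is a seminorm via Minkowski). In both cases this produces an envelope of the form $nM e^{(\max_i \lambda_i + \eps)t}$; taking $\log$, dividing by $t$, sending $t\to\infty$ and then $\eps\to 0^+$ yields $\chi \le \max_i \lambda_i$ for both quantities.

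For part (ii), the pointwise bound is immediate: $|\prod_i g^i(t)| = \prod_i |g^i(t)| \le M^n e^{(\sum_i\lambda_i + n\eps)t}$. For the $q$-variation factor I would proceed by induction on $n$ using Lemma \ref{qnorm-product}: with $u = g^1\cdots g^{n-1}$ and $v = g^n$ it yields
\[
\ltn uv\rtn_{q{\rm-var},[t,t+1]} \le \|u\|_{\infty,[t,t+1]}\,\ltn v\rtn_{q{\rm-var},[t,t+1]} + \|v\|_{\infty,[t,t+1]}\,\ltn u\rtn_{q{\rm-var},[t,t+1]}.
\]
The inductive hypothesis, together with the auxiliary sup-norm bound above, provides the expected exponential control of $\|u\|_{\infty,[t,t+1]}$, $\ltn u\rtn_{q{\rm-var},[t,t+1]}$, $\|v\|_{\infty,[t,t+1]}$ and $\ltn v\rtn_{q{\rm-var},[t,t+1]}$, so each of the two terms on the right has exponential rate $\sum_i \lambda_i + O(\eps)$. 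Sending $\eps\to 0^+$ finishes the argument.

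There is no genuine obstacle in this lemma: its content is essentially the classical Lyapunov-exponent calculus for sums and products, the only new ingredient being the bookkeeping for the $q$-variation seminorm on unit intervals. The one place to be slightly careful is the passage from the pointwise Lyapunov hypothesis to sup-norm control on $[t,t+1]$, since this is exactly what makes Lemma \ref{qnorm-product} applicable; once that observation is in place, both (i) and (ii) reduce to straightforward envelope manipulations followed by $\eps\to 0^+$.
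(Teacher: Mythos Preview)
Your proposal is correct and follows essentially the same route as the paper: both use Minkowski for the $q$-variation seminorm in (i), and for (ii) both invoke Lemma~\ref{qnorm-product} together with the sup-norm bound $\|g^i\|_{\infty,[t,t+1]}\le |g^i(t)|+\ltn g^i\rtn_{q{\rm-var},[t,t+1]}$ and reduce to $n=2$ by induction. The only cosmetic difference is that the paper cites Demidovich for the classical pointwise parts and packages the product estimate as $\ltn g^1g^2\rtn\le 4(|g^1(t)|+\ltn g^1\rtn)(|g^2(t)|+\ltn g^2\rtn)$, whereas you spell out the $\eps$-envelope bookkeeping explicitly.
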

\begin{proof}
$(i)\; $ The proof is similar to \cite[Theorem 1, p. 127]{demidovich} with note that
$$\ltn \sum_{i=1}^ng^i\rtn_{q{\rm-var},[t,t+1]}\leq \sum_{i=1}^n \ltn g^i\rtn_{q{\rm-var},[t,t+1]}.$$
\\
$(ii)\;$ The first inequality is known due to  \cite[Theorem 2, p\ 19]{demidovich}. For the second one, it suffices to show for $k=2$, since the general case is obtained by induction.\\ %\cite[Theorem1.5, p61]{demidovich}\\
It follows from Lemma \ref{qnorm-product} that
\begin{eqnarray*}
\ltn g^1g^2\rtn_{q{\rm-var},[t,t+1]}&\leq & (\|g^1\|_{\infty,[t,t+1]}+ \ltn g^1\rtn_{q{\rm-var},[t,t+1]})(\|g^2\|_{\infty,[t,t+1]}+ \ltn g^2\rtn_{q{\rm-var},[t,t+1]})\\
&\leq & 4 (\|g^1(t)\|+ \ltn g^1\rtn_{q{\rm-var},[t,t+1]})(\|g^2(t)\|+ \ltn g^2\rtn_{q{\rm-var},[t,t+1]}).
\end{eqnarray*}
Therefore the the second inequality followed from the first one and $(i)$.
\end{proof}

\vspace{0.5cm}
By similar arguments using the integration by part formula, the non-homogeneous one dimensional linear equation 
\begin{equation}
dx(t) = [a(t)x(t)+h_1(t)]dt + [c(t)x(t)+h_2(t)]d\omega(t)
\end{equation}
can be solved explicitly as
\begin{eqnarray*}
x(t) = e^{\int_0^ta(s)ds+\int_0^tc(s)d\omega(s)}\left(x_0 +  \int_0^t e^{-\int_0^ta(s)ds-\int_0^tc(s)d\omega(s)}h_1(s) ds + \int_0^t e^{-\int_0^ta(s)ds-\int_0^tc(s)d\omega(s)}h_2(s)d\omega(s)\right),\notag\\
\end{eqnarray*}
provided that $h_1,h_2$ are in $\cC^{q\rm{-var}}([0,t],\R)$ for all $t>0$. 
This allow us to solve triangular systems  by substitution as seen in the following theorem.

%====================================
\begin{theorem}\label{triangularcase} 
	Under assumptions (${\textbf H}_1$) -- (${\textbf H}_4$), if there exist the exact limits
\begin{equation}\label{exactmean}
\overline{a}_{kk}:=\lim_{t\to\infty} \frac{1}{t}\int_{0}^t a_{kk}(s)ds ,\;\; k= 1, \dots, d,
\end{equation}
	 then the spectrum of system \eqref{trianglesys} is given by
	$$\{\overline{a}_{11},\overline{a}_{22},\dots ,\overline{a}_{dd}\}.$$
\end{theorem}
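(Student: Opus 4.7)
The plan is to invoke the characterization in the last sentence of Remark \ref{remLE}(ii): once one exhibits a basis of solutions whose column Lyapunov exponents sum to $\displaystyle\limsup_{t\to\infty} \frac{1}{t}\log|\det\Phi_\omega(t_0,t)|$, these column exponents are exactly the Lyapunov spectrum. So the proof reduces to (a) identifying that $\limsup$ and (b) constructing such a basis with column exponents $\overline{a}_{11},\ldots,\overline{a}_{dd}$.

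Step (a) is a direct application of the Liouville-type formula stated in the remark following Proposition \ref{thm.lin.flow}:
\[
\log|\det\Phi_\omega(t_0,t)| = \sum_{i=1}^d \int_{t_0}^t a_{ii}(s)\,ds + \sum_{i=1}^d \int_{t_0}^t c_{ii}(s)\,d\omega(s).
\]
Combining the exact-mean hypothesis \eqref{exactmean} with (${\textbf H}_4$) (along $\N$) gives $\lim_{n\in\N} \frac{1}{n}\log|\det\Phi_\omega(t_0,n)| = \sum_{i=1}^d \overline{a}_{ii}$, which in particular equals the corresponding $\limsup$.

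For step (b) I would proceed by back-substitution. Fix $k$ and prescribe $y^{(k)}_j(0)=0$ for $j>k$ together with $y^{(k)}_k(0)=1$; the upper triangular structure then forces $y^{(k)}_j(t)\equiv 0$ for all $j>k$, so the initial vectors $\{y^{(k)}(0)\}_{k=1}^d$ are automatically linearly independent. The diagonal component $y^{(k)}_k$ solves the scalar equation \eqref{1dimsystem} with coefficients $a_{kk},c_{kk}$, so Lemma \ref{lemma13} gives $\chi(y^{(k)}_k)=\overline{a}_{kk}$ and $\chi(\ltn y^{(k)}_k\rtn_{q{\rm-var},[t,t+1]})\leq \overline{a}_{kk}$. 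Descending from $j=k-1$ down to $j=1$, each $y^{(k)}_j$ satisfies a non-homogeneous scalar YDE which, by the variation-of-constants formula used to derive \eqref{solution1}, admits the explicit representation
\[
y^{(k)}_j(t) = e^{f_j(t)}\Big[C_j + \int_0^t e^{-f_j(s)}F^{(k)}_j(s)ds + \int_0^t e^{-f_j(s)}G^{(k)}_j(s)d\omega(s)\Big],
\]
where $f_j(t)=\int_0^t a_{jj}+\int_0^t c_{jj}d\omega$, the forcings $F^{(k)}_j, G^{(k)}_j$ are linear in the previously constructed $y^{(k)}_l$ ($l>j$), and the initial constant $C_j$ is still at our disposal. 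By Lemmas \ref{lemma13}(ii) and \ref{lemma14}(ii) both integrands have Lyapunov exponent at most $\overline{a}_{kk}-\overline{a}_{jj}$, simultaneously in sup-norm and in $q$-variation on unit intervals. If $\overline{a}_{jj}\leq \overline{a}_{kk}$ I take $C_j=0$ and estimate the two integrals from $0$ to $t$ directly (Young-Loeve on each unit subinterval for the $d\omega$ part), giving them exponent $\leq \overline{a}_{kk}-\overline{a}_{jj}$ and hence $\chi(y^{(k)}_j)\leq \overline{a}_{kk}$. If $\overline{a}_{jj}>\overline{a}_{kk}$ the integrands decay exponentially, the improper integrals $\int_0^\infty$ converge (the Young one via a Cauchy argument built from Young-Loeve on consecutive unit intervals and geometric summation, with $\ltn\omega\rtn_{p{\rm-var},[n,n+1]}$ controlled by (${\textbf H}_3$)); I then choose $C_j$ to cancel these improper values, so that $y^{(k)}_j(t)=-e^{f_j(t)}\int_t^\infty\cdots$ and the tail estimate again gives exponent $\leq \overline{a}_{kk}$. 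The same estimates propagate a bound $\chi(\ltn y^{(k)}_j\rtn_{q{\rm-var},[t,t+1]})\leq \overline{a}_{kk}$, closing the induction.

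Once the basis is in hand, $\chi(y^{(k)})\geq \overline{a}_{kk}$ from the $k$-th coordinate and $\chi(y^{(k)})\leq \overline{a}_{kk}$ from Lemma \ref{lemma14}(i), so $\sum_k \chi(y^{(k)})=\sum_k \overline{a}_{kk}$, which by step (a) coincides with $\limsup_{t\to\infty}\frac{1}{t}\log|\det\Phi_\omega(t_0,t)|$. Remark \ref{remLE}(ii) then identifies $\{\overline{a}_{11},\ldots,\overline{a}_{dd}\}$ as the Lyapunov spectrum of \eqref{trianglesys}. The hard part is Case~2 of step~(b): making sense of the improper Young integrals and carrying simultaneous sup-norm and $q$-variation control through the inductive layer with only (${\textbf H}_3$) available on $\omega$ — this is where the two-sided control in Lemma \ref{lemma13}(ii) and the product estimate of Lemma \ref{lemma14}(ii) must be used in concert.
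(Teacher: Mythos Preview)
Your proposal is correct and follows essentially the same route as the paper: construct an upper-triangular fundamental matrix by back-substitution and variation of constants, split each off-diagonal entry into two cases according to the sign of $\overline{a}_{kk}-\overline{a}_{jj}$ (choosing the lower limit of integration to be $0$ or $+\infty$), propagate simultaneous sup-norm and $q$-variation-on-unit-intervals control via Lemmas~\ref{lemma13} and~\ref{lemma14}, and close with the normal-basis criterion of Remark~\ref{remLE}(ii). The paper packages your ``Young--Loeve on unit subintervals'' and ``Cauchy/geometric summation for the improper Young integral'' arguments into two appendix lemmas (Lemmas~\ref{lemma11} and~\ref{lemma12}), and computes the determinant directly from the triangular structure of $X(t)$ rather than via the Liouville formula, but these are cosmetic differences.
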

\begin{proof}
For all $k=1,2,...,d$, put $Y_k(t)=e^{ \int_0^t a_{kk}(s)ds +\int_0^t c_{kk}(s)d\omega(s)} $. Then due to Lemma \ref{lemma13}
$$\chi(Y_k(t)) = \overline{a}_{kk},\; \chi (\ltn Y_k\rtn_{q{\rm -var},[t,t+1]})\leq \overline{a}_{kk} ,\; \chi(Y^{-1}_k(t)) =- \overline{a}_{kk},  \;\chi (\ltn Y^{-1}_k\rtn_{q{\rm -var},[t,t+1]})\leq -\overline{a}_{kk}.$$

 We construct a fundamental solution matrix $X(t)=\left(x_{ij}(t)\right)_{d\times d}$ of \eqref{trianglesys} as follows.
$$x_{ik}(t)=
\begin{cases}
0 & {\rm if}  \; i>k,\\
Y_k(t)& { \rm if}  \; i=k,\\
 Y_i(t) \left[\displaystyle\int_{t_{ik}}^t Y^{-1}_i(s)\sum_{j=i+1}^k a_{ij}(s)x_{jk}(s)ds  + \int_{t_{ik}}^t Y^{-1}_i(s)\sum_{j=i+1}^k c_{ij}(s)x_{jk}(s)d\omega(s)  \right]    &       {\rm if}  \; i<k,
\end{cases}
$$
in which, $t_{ik}=
	\begin{cases}
	0,\;\;{\rm if}\;\; \overline{a}_{kk}-\overline{a}_{ii}\geq 0\\
	+\infty ,\;\;{\rm if}\;\; \overline{a}_{kk}-\overline{a}_{ii}< 0.
	\end{cases}
	$\\
Now we consider the $d^{th}$ collumn of $X$ and prove by induction that 
$$\chi(x_{jd}(t)), \; \chi(\ltn x_{jd}\rtn_{q{{{\rm -var}}},[t,t+1]})\leq  \overline{a}_{dd},\; j=1,2,\dots, d.$$
First, by Lemma \ref{lemma13} the statement is true for $j=d$. Assume that $\chi(x_{jd}(t)), \; \chi(\ltn x_{jd}\rtn_{q{{{\rm -var}}},[t,t+1]})\leq  \overline{a}_{dd}$ for all $i+1\leq j\leq d$, we will prove that
$$\chi(x_{id}(t)), \; \chi(\ltn x_{id}\rtn_{q{{{\rm -var}}},[t,t+1]})\leq  \overline{a}_{dd}.$$
Put  
$$
I(t):=\displaystyle\int_{t_{id}}^t Y^{-1}_i(s)\sum_{j=i+1}^d a_{ij}(s)x_{jd}(s)ds \; \;{\rm{and }}\;\; J(t):=\displaystyle \int_{t_{id}}^t Y^{-1}_i(t)\sum_{j=i+1}^d c_{ij}(s)x_{jd}(s)d\omega(s),
 $$
 then 
$$
x_{id}(t) = Y_i(t) [I(t)+J(t)].
$$
Since $A$ is bounded, we apply \cite[ Corollary of Theorem 2, p. 129]{demidovich} to get 
\begin{eqnarray}
	\chi\left(\sum_{j=i+1}^{d} a_{ij}(s)x_{jd}(s)\right)\leq \overline{a}_{dd}. \label{a}
\end{eqnarray}
Therefore, $\chi\left(Y^{-1}_i(s) \sum_{j=i+1}^{d} a_{ij}(s)x_{jd}(s)\right)\leq \overline{a}_{dd}-\overline{a}_{ii}$.  Due to \cite[Theorem 4,p. 131]{demidovich} we obtain 
\[ \chi(I(t))\leq  \overline{a}_{dd}-\overline{a}_{ii}.\]
On the other hand, the following estimate holds
\[ \chi(\ltn I\rtn_{q{{\rm -var}},[t,t+1]})\leq  \overline{a}_{dd}-\overline{a}_{ii}.\]
 Indeed, with $I(t) = \int_0^t k(s)ds  $  and $\chi(k(s))\leq \lambda$, we have for $u,v\in[t,t+1]$,
\begin{eqnarray*}
|I(u)-I(v)|&\leq & |u-v| \|k\|_{\infty,[u,v]}\\
&\leq & |u-v| D(\varepsilon)e^{(\lambda+\varepsilon)t}, \text{\ for each\ } \varepsilon >0.
\end{eqnarray*}
This implies $\ltn I\rtn_{q{\rm-var},[t,t+1]}\leq D(\varepsilon)e^{(\lambda+\varepsilon)t}$.  The proof for the case $I(t) = \int_t^\infty k(s)ds  $ is similar.\\
Next, $\chi (Y^{-1}_i(t)),\chi(\ltn Y^{-1}_i\rtn_{q{{\rm -var}},[t,t+1]})\leq - \overline{a}_{ii}$ and $C$ satisfies (${\textbf H}_2$), i.e $\chi(C(t)),\;\chi(\ltn C\rtn_{q{\rm -var},[t,t+1]})\leq 0$. 
Together with the induction hypothesis that 
 $$\chi(x_{jd}(t)), \; \chi(\ltn x_{jd}\rtn_{q{{{\rm -var}}},[t,t+1]})\leq  \overline{a}_{dd}, \;\;\forall i+1\leq j\leq d$$
   and Lemma \ref{lemma14} we obtain
 $$\chi\left(Y^{-1}_i(t)\sum_{j=i+1}^d c_{ij}(t)x_{jd}(t)\right),\; \chi\left(\ltn Y^{-1}_i\sum_{j=i+1}^d c_{ij}x_{jd} \rtn_{q{\rm -var},[t,t+1]}\right)\leq \overline{a}_{dd}-\overline{a}_{ii}.$$
 Due to Lemma \ref{lemma11} and \ref{lemma12},
 \[
  \chi(J(t))\leq  \overline{a}_{dd}-\overline{a}_{ii}, \;  \chi(\ltn J\rtn_{q{\rm -var},[t,t+1]})\leq  \overline{a}_{dd}-\overline{a}_{ii}.
 \]
 Again, we apply Lemma \ref{lemma14} for $Y_i$, $I$ and $J$ to get
 $$
 \chi(x_{id}(t)), \; \chi(\ltn x_{id}\rtn_{q{{{\rm -var}}},[t,t+1]})\leq \overline{a}_{ii} +\overline{a}_{dd}-\overline{a}_{ii}=\overline{a}_{dd}.
 $$
 Hence, the Lyapunov exponent of the column  $d^{th}$, $X_d$, of matrix $X$ does not exceed $\overline{a}_{dd}$, meanwhile $\chi(x_{dd}(t)) = \overline{a}_{dd}$. This proves $\chi(X_d(t))=\overline{a}_{dd}$.
 
 Similarly, $\chi(X_i(t))=\overline{a}_{ii}$ for $i=1,2,\dots,d$, in which $X_i$ is the column $i^{th}$ of $X$.
 Finally, since $\sum_{i=1}^d\overline{a}_{ii} = \lim_{t\to\infty}\frac{1}{t}\log |\det X(t)|$, $X(t)$ is a normal matrix solution to \eqref{trianglesys} and the Lyapunov spectrum of \eqref{trianglesys} is
 $\{\overline{a}_{11},\overline{a}_{22},\dots ,\overline{a}_{dd}\}$.
 	\end{proof}
	
\begin{remark}
 In the theory of ODEs, Theorem Perron states that a linear equation can be reduced to a linear triangular system (see \cite[p. 180]{demidovich}). However, we do not know if it is true for linear Young differential equations. That is because for a linear YDE, besides the drift term $A$ corresponding to $dt$ we do have also the diffusion term $C$ corresponding to $d\omega$. Hence it is difficult to tranform the original system to a triangular form whose coefficient matrices only depend on $t$.
\end{remark}
%===========================
\subsection{Lyapunov regularity}
The concept {\em regularity} has been introduced by Lyapunov for linear ODEs, and since then has attracted lots of interests (see e.g. \cite[Chapter 3, p. 115]{arnold}, \cite{cong2004}, \cite{mil68}, or \cite[Section 1.2]{barreira}). For a linear YDE, we define the concept of Lyapunov regularity via the generated two-parameter flow. 

%===========================
\begin{definition}\label{dfn.Lregular}
	Let $\Phi_{\omega}(s,t)$ be a two-parameter flow of linear
	operators of $\R^d$ and
	$\{\lambda_1(\omega),\ldots, \lambda_d(\omega)\}$ be the Lyapunov spectrum
	of $\Phi_{\omega}(s,t)$.
	Then the non-negative $\bar\R$-valued random variable
	$$
	\sigma(\omega) :=  \sum_{k=1}^d \lambda_k
	-\liminf_{t\rightarrow\infty} \frac{1}{t} \log |\det\Phi_\omega(0,t)|
	$$ %\sum_{k=1}^d \lambda_k(\omega) -{\underline\delta}(\omega) =
	is called coefficient of nonregularity of  the two-parameter flow
	$\Phi_{\omega}(s,t)$.\smallskip\\
	The  coefficient of nonregularity of the linear YDE
	(\ref{lin2}) is, by definition, the coefficient of
	nonregularity of the two-parameter flow  generated by
	(\ref{lin2}).\smallskip\\
	A two-parameter flow is called Lyapunov regular if its coefficient
	of nonregularity equals 0 identically.
	A linear YDE is called Lyapunov regular if its coefficient of
	nonregularity equals 0.
\end{definition}
It follows from \cite{cong} that if a two-parameter linear flow $\Phi_{\omega}(s,t)$ is	Lyapunov regular then its determinant $\det\Phi_{\omega}(s,t)$ as well as any trajectory have exact Lyapunov exponents, i.e. the limit in \eqref{eqn.LE.2flow} is exact.\\
%=======================
We define the {\em adjoint equation} of \eqref{lin1} (and also of the equivalent integral equation \eqref{lin2}) by
\begin{equation}\label{adjoint}
dy(t) = -A^T(t)y(t) dt -C^T(t) y(t)d\omega(t). 
\end{equation}
The following lemma is a version of Perron Theorem from  the classical ODE case.
\begin{lemma}[Perron Theorem]\label{lem.perron}
	Let $\alpha_1\geq \cdots \geq \alpha_d$ and $\beta_1\leq \cdots \leq
	\beta_d$ be the Lyapunov spectrum of \eqref{lin2} and \eqref{adjoint} respectively. Then \eqref{lin2} is Lyapunov regular if and only if $\alpha_i + \beta_{i}=0$ for all $i=1,\ldots,d$. 
\end{lemma}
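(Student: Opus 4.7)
The plan is to imitate the classical Perron theorem for linear ODEs (Demidovich \cite{demidovich}). The entire argument is algebraic and needs only two structural facts from Section~2: the biorthogonality $\Psi_\omega(t_0,t)^T \Phi_\omega(t_0,t) = I$ derived inside the proof of Proposition \ref{thm.lin.flow}, and the Liouville identity $\det \Phi_\omega(t_0,t)\cdot \det \Psi_\omega(t_0,t) = 1$ from the remark following it, which rewrites as $\log|\det \Psi_\omega(t_0,t)| = -\log|\det \Phi_\omega(t_0,t)|$.

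I begin with two preparatory ingredients. Pick any fundamental matrix $X(t)$ of \eqref{lin2} with columns $\{x_i\}$ and let $Y(t) := (X(t)^T)^{-1}$ be the corresponding fundamental matrix of \eqref{adjoint}, with columns $\{y_i\}$ biorthogonal to $\{x_i\}$. From $\langle y_i(t), x_i(t)\rangle \equiv 1$ and Cauchy--Schwarz we obtain Perron's individual inequality $\chi(x_i) + \chi(y_i) \geq 0$ for every $i$. Applying Remark \ref{remLE}(ii) to both \eqref{lin2} and \eqref{adjoint}, together with the Liouville identity,
$$\sum_{i=1}^d \alpha_i \;\geq\; \limsup_{t\to\infty}\tfrac{1}{t}\log|\det\Phi_\omega(t_0,t)|, \qquad \sum_{i=1}^d \beta_i \;\geq\; -\liminf_{t\to\infty}\tfrac{1}{t}\log|\det\Phi_\omega(t_0,t)|,$$
so that $\sum_i(\alpha_i + \beta_i) \geq 0$. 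The sufficiency direction is then immediate: if $\alpha_i + \beta_i = 0$ for all $i$, then $\sum\beta_i = -\sum\alpha_i$; substituted into the second inequality this forces $\sum\alpha_i \leq \liminf \tfrac{1}{t}\log|\det\Phi_\omega|$, which together with the first inequality and the trivial $\limsup \geq \liminf$ collapses to $\sum\alpha_i = \liminf\tfrac{1}{t}\log|\det\Phi_\omega|$, i.e. $\sigma(\omega) = 0$.

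For the necessity direction, suppose $\sigma(\omega) = 0$. Then the above chain becomes a chain of equalities and $L := \lim_{t\to\infty}\tfrac{1}{t}\log|\det\Phi_\omega(t_0,t)|$ exists and equals $\sum \alpha_i$. I would choose a normal basis $\{x_i\}$ of \eqref{lin2} realizing $\chi(x_i) = \alpha_i$ and form its dual $\{y_i\}$. Perron's individual inequality gives $\chi(y_i) \geq -\alpha_i$. To upgrade this to an equality I would use the standard bounded-angles property of a normal basis under regularity: $\prod_i |x_i(t)|$ and $|\det X(t)| = \exp(Lt + o(t))$ differ only subexponentially, and by duality the same transfers to $\{y_i\}$, giving $\sum\chi(y_i) = -L = -\sum\alpha_i$; combined with $\chi(y_i) \geq -\alpha_i$ for each $i$ this forces the componentwise equalities $\chi(y_i) = -\alpha_i$. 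Hence $\{y_i\}$ is a normal basis of \eqref{adjoint} whose exponent multiset $\{-\alpha_1,\ldots,-\alpha_d\}$ coincides, by the minimum-sum characterization in Remark \ref{remLE}(i), with the Lyapunov spectrum $\{\beta_i\}$; with the prescribed orderings $\alpha_1\geq\cdots\geq\alpha_d$ and $\beta_1\leq\cdots\leq\beta_d$, this yields $\beta_i = -\alpha_i$ for each $i$.

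The main obstacle is the necessity direction, specifically the bounded-angles property of a normal basis of a regular flow used to pass from the summed inequality $\sum(\alpha_i+\beta_i) \geq 0$ to componentwise equality. Its classical proof (see e.g. Demidovich \cite{demidovich}) uses only biorthogonality, Hadamard's inequality and exactness of the determinant growth --- all of which are available here via the Liouville identity and the linear flow structure of Proposition \ref{thm.lin.flow} --- so it transfers to the YDE setting without modification.
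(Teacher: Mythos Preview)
Your proposal is correct and follows exactly the approach the paper takes: the paper's proof consists of the single sentence ``The proof goes line by line with the ODE version in Demidovich~\cite[p.~170--173]{demidovich},'' and you have reconstructed precisely that argument, correctly isolating the two structural ingredients (biorthogonality $\Psi_\omega^T\Phi_\omega=I$ and the Liouville identity) that make the classical Perron proof transfer verbatim to the YDE setting.
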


%=============================================
\begin{proof}
 The proof goes line by line with the ODE version in Demidovich ~\cite[p. 170-173]{demidovich}.
\end{proof}
%=======================
\begin{theorem}[Lyapunov theorem on regularity of triangular system]\label{thm.L.reg.triangle} 
	Suppose that the matrices $A(t), C(t)$ are upper triangular and satisfy (${\textbf H}_1$) -- (${\textbf H}_4$). 
	Then  system \eqref{trianglesys} is Lyapunov
	regular if and only if  there exists $\lim\limits_{t\to\infty} \frac{1}{t}\int_{t_0}^t a_{kk}(s)ds, \; k=\overline{1,d}$.
\end{theorem}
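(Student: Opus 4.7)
The plan is to establish the equivalence by two separate implications: sufficiency is immediate from Theorem \ref{triangularcase} and the Liouville--type identity, while necessity requires combining the Liouville formula with the natural flag of invariant subspaces furnished by the triangular structure.

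For the sufficient direction, assume the diagonal limits $\bar{a}_{kk}:=\lim_{t\to\infty}\frac{1}{t}\int_0^t a_{kk}(s)\,ds$ exist for every $k$. Theorem \ref{triangularcase} then identifies the Lyapunov spectrum of \eqref{trianglesys} as $\{\bar{a}_{11},\ldots,\bar{a}_{dd}\}$, so $\sum_{k=1}^d\lambda_k=\sum_{k=1}^d\bar{a}_{kk}$. The Liouville--type formula recorded after Proposition \ref{thm.lin.flow} gives
\[
\log|\det\Phi_\omega(0,t)| \;=\; \int_0^t\sum_{k=1}^d a_{kk}(s)\,ds \;+\; \int_0^t\sum_{k=1}^d c_{kk}(s)\,d\omega(s),
\]
and assumption $(\textbf{H}_4)$ applied to each diagonal entry $c_{kk}$ makes the $\omega$-integral contribute $o(t)$. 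Hence $\lim_{t\to\infty}\frac{1}{t}\log|\det\Phi_\omega(0,t)|=\sum_{k=1}^d\bar{a}_{kk}$, so the coefficient of nonregularity $\sigma$ vanishes.

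For the necessary direction, assume \eqref{trianglesys} is Lyapunov regular. The Lyapunov inequality from Remark \ref{remLE}(ii) combined with $\sigma=0$ forces the $\liminf$ and $\limsup$ of $\frac{1}{t}\log|\det\Phi_\omega(0,t)|$ to both equal $\sum_k\lambda_k$, so the limit exists; the same Liouville--$(\textbf{H}_4)$ step as above then yields the existence of the aggregate limit $\lim_{t\to\infty}\frac{1}{t}\int_0^t\sum_{j=1}^d a_{jj}(s)\,ds$. To separate the individual terms I exploit the nested flag $V_k:=\mathrm{span}(e_1,\ldots,e_k)$, $k=1,\ldots,d$: since $A$ and $C$ are upper triangular, each $V_k$ is $\Phi_\omega$-invariant, and the restricted flow $\Phi_\omega|_{V_k}$ is generated by the leading $k\times k$ triangular subsystem with diagonal coefficients $(a_{jj},c_{jj})_{1\le j\le k}$.

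The main obstacle is to justify that Lyapunov regularity of $\Phi_\omega$ on $\R^d$ descends to each restriction $\Phi_\omega|_{V_k}$. The natural route is via the exterior power $\Lambda^k\Phi_\omega$, which remains a linear two-parameter flow with exact top Lyapunov exponent $\sum_{j=1}^k\lambda_j$; the one-dimensional invariant line $\Lambda^k V_k$ carries the induced action $\det(\Phi_\omega|_{V_k})$, and exactness of the scalar limit $\lim_{t\to\infty}\frac{1}{t}\log|\det(\Phi_\omega|_{V_k})(0,t)|$ combined with the Lyapunov inequality applied to $\Phi_\omega|_{V_k}$ forces the sum of its Lyapunov exponents to match this limit, i.e.\ regularity of the restriction. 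Granting this inheritance, the Liouville--$(\textbf{H}_4)$ argument applied to $\Phi_\omega|_{V_k}$ yields the existence of $\lim_{t\to\infty}\frac{1}{t}\int_0^t\sum_{j=1}^k a_{jj}(s)\,ds$ for every $k$. Taking telescoping differences of these aggregate limits between $k$ and $k-1$ finally produces $\lim_{t\to\infty}\frac{1}{t}\int_0^t a_{kk}(s)\,ds$ for each $k$, completing the proof.
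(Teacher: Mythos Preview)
Your sufficiency argument is correct and coincides with the paper's: Theorem~\ref{triangularcase} together with the Liouville identity and $(\textbf{H}_4)$ forces the coefficient of nonregularity to vanish.

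For the necessity, you take a different route from the paper, and the step you yourself flag (``granting this inheritance'') is a genuine gap as written. Knowing that the \emph{top} exponent of $\Lambda^k\Phi_\omega$ is exact and that $\Lambda^k V_k$ is an invariant line does not force the limit along that particular line to exist: the decomposable vector $e_1\wedge\cdots\wedge e_k$ need not realize the top exponent of $\Lambda^k\Phi_\omega$, so your in-line justification is circular. What you actually need is the standard but nontrivial fact that Lyapunov regularity of $\Phi$ passes to every exterior power $\Lambda^k\Phi$ (equivalently, that volume growth is exact on \emph{every} subspace). Once that is imported, your Liouville--telescoping argument goes through cleanly; but this fact is not developed in the paper, and the remark after Definition~\ref{dfn.Lregular} only asserts exactness for trajectories in $\R^d$, not in $\Lambda^k\R^d$.

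The paper's own argument avoids this by using Perron's theorem (Lemma~\ref{lem.perron}) and the adjoint equation. One constructs an upper-triangular normal fundamental matrix $X$ with diagonal entries $Y_k(t)=\exp\big(\int_0^t a_{kk}\,ds+\int_0^t c_{kk}\,d\omega\big)$; then $Y=(X^{-1})^T$ is a normal basis of the adjoint, lower triangular with diagonal entries $Y_k^{-1}$. Perron's criterion gives $\chi(x^k)+\chi(y^k)=0$, while the componentwise bounds $\chi(x^k)\geq\chi(Y_k)=\limsup\frac{1}{t}\int_0^t a_{kk}$ and $\chi(y^k)\geq\chi(Y_k^{-1})=-\liminf\frac{1}{t}\int_0^t a_{kk}$ squeeze $\limsup=\liminf$. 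This stays entirely within the tools already established in the paper and requires no appeal to regularity of exterior powers.
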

\begin{proof}
	The only if part is proved in Theorem \ref{triangularcase}. For the if part, the proof is similar to the \cite[p. 174]{demidovich}. Indeed, based on the normal basis of $\R^d$ which forms the unit matrix we construct a fundamental basis $ \tilde{X}$ of the system which is an upper triangular matrix and the diagonal entry is  
	$$Y_{1}(t),Y_{2}(t),\dots, Y_{d}(t),$$
	where $Y_{k}$ are defined in Theorem \ref{triangularcase}.\\
	We choose an upper triangular matrix $D=D(\omega)$ of which diagonal elements  are 1, such that $X:=\tilde{X}D$ is an normal basis of \eqref{lin2} with $x^i$ to be the column vectors (see also Remark \ref{remLE}). Put  $Y=(y_{ij})=(X^{-1})^T$ and repeat the arguments in Lemma \ref{lem.perron} under the regularity assumption, it follows that $Y$ is a normal basis of \eqref{adjoint}. Moreover, $ y_{kk}= Y_{k}^{-1}$ and 
	$$
	\chi(x^k(t))+\chi(y^k(t))=0,\forall k = 1,\dots,d.
	$$
	Hence
	\begin{eqnarray*}
		\chi(x^k(t))&\geq& \chi(Y_{k}(t)) = \limsup_{t\to\infty}\frac{1}{t}\int_{t_0}^ta_{kk}(s)ds
	\end{eqnarray*}
	and similarly,
	\begin{eqnarray*}
		\chi(y^k(t))&\geq& \chi(Y^{-1}_{k}(t)) = -\liminf_{t\to\infty}\frac{1}{t}\int_{t_0}^ta_{kk}(s)ds.
	\end{eqnarray*}
	Therefore, $$0\geq \limsup_{t\to\infty}\frac{1}{t}\int_{t_0}^ta_{kk}(s)ds-\liminf_{t\to\infty}\frac{1}{t}\int_{t_0}^ta_{kk}(s)ds\geq 0$$
	which implies that there exists the limit $\lim_{t\to\infty}\frac{1}{t}\int_{t_0}^ta_{kk}(s)ds, \;\; k = 1,\dots, d$.
\end{proof}

%====================================

\section{Lyapunov spectrum for linear stochastic differential equations}

In this section, we would like to investigate the same question in the random perspective, i.e. the driving path $\omega$ is a realization of a stochastic process $Z$ with stationary increments. System \eqref{lin1} can then be embedded into a stochastic differential equation, or precisely a random differential equation which can be solved in the pathwise sense. Such a system generates a stochastic two-parameter flow, hence it makes sense to study its Lyapunov spectrum and also to raise the question on the non-randomness of the spectrum.  

\subsection{Generation of stochastic two-parameter flows}

More precisely, recall that $\cC^{0,p-\rm{var}}([a,b],\R^d)$ is the closure of smooth paths from $[a,b]$ to $\R^d$ in $p$-variation norm and $\cC^{0,p-\rm{var}}(\R,\R^d)$ is the space of all $x: \R\to \R^d$ such that $x|_I \in \cC^{0,p-\rm{var}}(I, \R^d)$ for each compact interval $I\subset\R$. Then equip $\cC^{0,p-\rm{var}}(\R,\R^d)$ with the compact open topology given by the $p-$variation norm, i.e  the topology generated by the metric:
\[
d_p(x,y): = \sum_{m\geq 1} \frac{1}{2^m} (\|x-y\|_{p{\rm-var},[-m,m]}\wedge 1).
\]
Assign
$$\cC^{0,p-\rm{var}}_0(\R,\R^d):= \{x\in \cC^{0,p-\rm{var}}(\R,\R^d)|\; x(0)=0\}.$$
Note that for $x\in \cC^{0,p-\rm{var}}_0(\R,\R^d)$, $\ltn x\rtn_{p{\rm-var},I} $ and $\|x\|_{p{\rm-var},I}$ are equivalent norms for every compact interval $I$ containing $0$. \\
Let us consider a stochastic process $\bar{Z}$ defined on a complete probability space $(\bar{\Omega},\bar{\mathcal{F}},\bar{\bP})$  with realizations in $(\cC^{0,p-\rm{var}}_0(\R,\R), \mathcal{B})$, where $\mathcal{B}$ is Borel $\sigma -$algebra. Denote by $\theta$ the {\it Wiener shift}
\[
\theta_t m(\cdot) = m(t+\cdot) - m(t),\forall t\in \R, m\in \cC^{0,p-\rm{var}}_0(\R,\R^d).
\]
It is easy to check that $\theta$ forms a metric dynamical system $(\theta_t)_{t\in \R}$ on $(\cC^{0,p-\rm{var}}_0(\R,\R), \mathcal{B})$. Moreover, the Young integral satisfies the shift property with respect to $\theta$, i.e.
\begin{equation}\label{shift}
	\int_a^b x(u)d\omega(u) = \int_{a-r}^{b-r} x(u+r) d\theta_r \omega(u).  
\end{equation}  
Assume further that $\bar{Z}$ has stationary increments. It follows, as the simplest version for rough cocycle in \cite[Proposition 1]{bailleuletal} w.r.t. Young integrals that, there exists a probability $\bP$ on $(\Omega, \mathcal{F}) = (\cC^{0,p-\rm{var}}_0(\R,\R), \mathcal{B})$ that is invariant under $\theta$, and the so-called {\it diagonal process}  $Z: \R \times \Omega \to \R, Z(t,\tilde{\omega}) = \tilde{\omega}(t)$ for all $t\in \R, \tilde{\omega} \in \Omega$, such that $Z$ has the same law with $\bar{Z}$ and satisfies the {\it helix property}:
	$$Z_{t+s}(\omega) = Z_s(\omega) + Z_t(\theta_s\omega), \forall  \omega \in \Omega, t,s\in \R.$$
%The measurability of $Z$ comes from Castaing \& Valadier argument for separable space $\cC^{0,p-\rm{var}}_0(\R,\R)$. \\
Such stochastic process $Z$ has also stationary increments and almost all of its realization belongs to $\cC^{0,p-\rm{var}}_0(\R,\R)$. It is important to note that the existence of $\bar{Z}$ is necessary to construct the diagonal process $Z$. For example if $\bar{Z}$ is a fractional Brownian motion then the corresponding probability space  $(\bar{\Omega},\bar{\mathcal{F}},\bar{\bP})$ can be constructed explicitly as in \cite{mariaetal}. \\		

%===============================

Next, we consider the stochastic differential equation
\begin{equation}\label{stochlin1}
dx(t) = A(t)x(t) dt + C(t) x(t) d Z(t,\omega),\ x(t_0)=x_0 \in \R^d, t\geq t_0, 
\end{equation}
where the second differential is understood in the path-wise sense as Young differential. Under the assumptions in Proposition \ref{existence}, there exists, for almost sure all $\omega \in \Omega$, a unique solution to \eqref{stochlin1} in the pathwise sense with the initial value $x_0\in\R^d$. Moreover, the solution $X:[t_0,t_0+T]\times[t_0,t_0+T]\times\R^d \times\Omega \rightarrow \R^d$  satisfies: (i) for a.s. all $\omega \in \Omega$, $X(\cdot,a,x_0,\omega) \in \cC^{0,q{\rm-var}}([t_0,t_0+T],\R^d)$, and (ii) $X(t,\cdot,\cdot,\cdot)$   is measurable w.r.t $(a,x_0,\omega)$. As a result, the generated two parameter flow $\Phi_{\omega}(s,t): \R^d \rightarrow \R^d$ in Proposition \ref{thm.lin.flow} in the pathwise sense is also a {\it stochastic two-parameter flow} (see definition in \cite[p. 114]{kunita}). 

%=========================

\begin{proposition}\label{thm.LE.flow}
(i)The Lyapunov exponents $\lambda_k(\omega)$,
$k=1,\ldots, d$, of $\Phi_\omega(s,t)$ are measurable functions of
$\omega\in\Omega$. \\
(ii) For any $u\in [t_0,\infty)$, the Lyapunov subspaces $E_k^u(\omega)$,
$k=1,\ldots, d$, of $\Phi_\omega(s,t)$ are measurable with respect to
$\omega\in\Omega$, and  invariant with respect to
the flow in the following sense
\[
\Phi_\omega(s,t) E_k^s (\omega) = E_k^t(\omega),\qquad\hbox{for all}\;
s,t\in [t_0,\infty), \omega\in\Omega, k=1,\ldots,d.
\]
\end{proposition}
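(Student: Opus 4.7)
The plan is to combine three ingredients already established in the paper: the continuity of the solution map from Proposition~\ref{existence}, which makes $\omega\mapsto\Phi_\omega(s,t)$ Borel measurable for each fixed $s,t$; the reduction to integer times proved in Theorem~\ref{thm2}; and the abstract results \cite[Theorems~2.5, 2.7, 2.8]{cong} on measurability and invariance of Lyapunov exponents and subspaces for two-parameter linear flows. The present proposition is essentially the translation of those abstract results into our Young-equation framework, once the joint measurability of the Cauchy operator has been checked.

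For part (i), I would first note that Proposition~\ref{existence} gives continuity of the solution mapping in $\omega\in(\Omega,d_p)$, so for each fixed $t\geq t_0$ the Cauchy operator $\omega\mapsto \Phi_\omega(t_0,t)\in\R^{d\times d}$ is continuous and a fortiori Borel measurable. By Theorem~\ref{thm2} we may restrict the outer $\limsup$ to integer times, so that
\[
\lambda_k(\omega)=\inf_{V\in\mathcal{G}_{d-k+1}}\sup_{y\in V}\limsup_{n\to\infty,\, n\in\N}\frac{1}{n}\log|\Phi_\omega(t_0,n)y|.
\]
Measurability of $\lambda_k$ then follows by the standard countable-dense-subset argument on the compact Grassmannian, exactly as in \cite[Theorem~2.5]{cong}, since $\omega\mapsto\Phi_\omega(t_0,n)$ is measurable for each $n\in\N$ and the dependence of the inner quantity on $V$ is sufficiently regular to reduce the infimum to a countable one.

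For part (ii), invariance is immediate from the cocycle identity \eqref{stoTPF}: for $s\leq u\leq t$ and $y\in E_k^s(\omega)$, setting $z:=\Phi_\omega(s,u)y$ gives $\Phi_\omega(u,t)z=\Phi_\omega(s,t)y$, hence $\limsup_{t\to\infty}\frac{1}{t}\log|\Phi_\omega(u,t)z|=\limsup_{t\to\infty}\frac{1}{t}\log|\Phi_\omega(s,t)y|\leq\lambda_k(\omega)$, so $z\in E_k^u(\omega)$. The reverse inclusion follows from the non-degeneracy of $\Phi_\omega(s,u)$ (Proposition~\ref{thm.lin.flow}) together with the $u$-independence of $\lambda_k$ and of the multiplicities of the distinct Lyapunov exponents, which forces $\dim E_k^u(\omega)$ to be the same on both sides. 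For measurability of $E_k^u$, joint measurability of $(\omega,y)\mapsto \limsup_{t\to\infty}\frac{1}{t}\log|\Phi_\omega(u,t)y|$ together with the measurability of $\lambda_k$ from (i) shows that $\{(\omega,y):\limsup_{t\to\infty}\frac{1}{t}\log|\Phi_\omega(u,t)y|\leq\lambda_k(\omega)\}$ is Borel in $\Omega\times\R^d$; the measurable-selection argument of \cite[Theorems~2.7, 2.8]{cong} then yields measurability of the subspace-valued map $\omega\mapsto E_k^u(\omega)$.

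The main obstacle I anticipate is this last step: in our pathwise and not necessarily ergodic setting the dimension of $E_k^u(\omega)$ can depend on $\omega$, so one cannot simply produce a continuous basis. The cleanest remedy is to work with the orthogonal projector onto $E_k^u(\omega)$ and extract its Borel measurability from the graph described above via the selection argument of \cite{cong}, rather than attempting to redo that step from scratch.
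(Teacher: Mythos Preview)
Your proposal is correct and follows essentially the same approach as the paper, which simply writes ``The proof is similar to the one in \cite[Theorems~2.5, 2.7, 2.8]{cong}''; you have merely supplied the details the paper omits. One small remark: your appeal to Theorem~\ref{thm2} to reduce the $\limsup$ to integer times is unnecessary (and imports the extra hypothesis~(${\textbf H}_3$)), since the continuity of $t\mapsto\Phi_\omega(t_0,t)$ already lets you replace the $\limsup$ over $t\in\R^+$ by a $\limsup$ over rationals, which suffices for measurability.
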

\begin{proof}
The proof of Theorem \ref{thm.LE.flow} is similar to the one in \cite[Theorems 2.5, 2.7, 2.8]{cong}.
\end{proof}

\begin{lemma}[Integrability condition]\label{lem3}
	Assume that 
	 there exists a function $H(\cdot,\cdot)$ which is increasing in the second variable, such that for any $r \geq 0$ 
	\begin{equation}\label{fBmest-new}
		E\ltn Z\rtn^r_{p{\rm-var},[s,t]} \leq H(r,t-s),\ \forall 0\leq s\leq t \leq 1.
	\end{equation}
Then under assumptions (${\textbf H}_1$) and (${\textbf H}_2$), $\Phi_\omega$ satisfies the following integrability condition for any $t_0\geq 0$
	\begin{equation}\label{integrability}
		E \sup_{t_0\leq s \leq t \leq t_0+1} \log^+ \|\Phi_\omega(s,t)^{\pm 1}\| \leq \eta \Big[2+ \Big(\frac{2M_0}{\mu}\Big)^{p} \Big(1+ H(p,1) \Big)\Big],
	\end{equation}
	where $M_0$ is determined by \eqref{M0}, $0<\mu<\min\{1,M_0\}$ and $\eta= -\log(1-\mu)$, and we use the notation  
	$$
	\log^+ \|\Phi_\omega(s,t)\| := \max \{ \log \|\Phi_\omega(s,t)\|, 0\}.
	$$
\end{lemma}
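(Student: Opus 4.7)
The plan is to reduce the integrability estimate to the deterministic growth bound \eqref{growth} applied on a unit time window, and then to convert the resulting pathwise estimate into an expectation estimate via the stationarity of the increments of $Z$.

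First I would fix $t_0\geq 0$ and arbitrary $s,t$ with $t_0\leq s\leq t\leq t_0+1$. By construction of the two-parameter flow in Proposition~\ref{thm.lin.flow}, each column of $\Phi_\omega(s,t)$ is the value at time $t$ of a solution of \eqref{linearfSDE} started at time $s$ from a unit-norm initial datum. Applying estimate \eqref{growth} on the interval $[s,t]$ (so with $T=t-s\leq 1$), together with $M^*(s,t-s)\leq M_0$, $T^p\leq 1$, and the monotonicity $\ltn\omega\rtn^p_{p{\rm-var},[s,t]}\leq \ltn\omega\rtn^p_{p{\rm-var},[t_0,t_0+1]}$, yields the pathwise bound
$$\log\|\Phi_\omega(s,t)\| \;\leq\; \eta\Big[2+\Big(\tfrac{2M_0}{\mu}\Big)^p\big(1+\ltn\omega\rtn^p_{p{\rm-var},[t_0,t_0+1]}\big)\Big].$$
The right-hand side is non-negative and independent of $s,t$, so it simultaneously controls $\sup_{t_0\leq s\leq t\leq t_0+1}\log^+\|\Phi_\omega(s,t)\|$.

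For the inverse, I would invoke the identity $\Phi_\omega(s,t)^{-1}=\Psi_\omega(s,t)^T$ established in the proof of Proposition~\ref{thm.lin.flow}, where $\Psi_\omega(s,\cdot)$ is the Cauchy operator of the adjoint equation \eqref{lin4}. Since \eqref{lin4} is itself a linear YDE of the form \eqref{lin1} with coefficients $-A^T$ and $-C^T$ having the same operator norms as $A$ and $C$ (so the same constant $M_0$ applies), the verbatim argument produces the identical pathwise bound for $\log\|\Phi_\omega(s,t)^{-1}\|$, since transposition preserves the operator norm.

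Taking expectations and using the Wiener shift $\theta$ together with the helix/stationary-increment property of $Z$ gives $E\ltn Z\rtn^p_{p{\rm-var},[t_0,t_0+1]} = E\ltn Z\rtn^p_{p{\rm-var},[0,1]} \leq H(p,1)$ by hypothesis \eqref{fBmest-new}, which, combined with the pathwise bound, yields \eqref{integrability}. The only delicate point is the measurability of the supremum over $(s,t)$; this is handled by the joint continuity of $(s,t)\mapsto\Phi_\omega(s,t)$ noted in the remark after Proposition~\ref{existence}, so no dominated-convergence argument is required beyond the uniform pathwise majorant already obtained.
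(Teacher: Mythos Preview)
Your proposal is correct and follows essentially the same route as the paper: the paper's proof is a one-paragraph sketch that cites the pathwise estimate \eqref{phiest-new} (itself a consequence of \eqref{growth}) for both $\Phi$ and the adjoint flow $\Psi$, uses $\Phi_\omega(s,t)^{-1}=\Psi_\omega(s,t)^T$, and then invokes the stationary-increment property of $Z$ to carry \eqref{fBmest-new} from $[0,1]$ to $[t_0,t_0+1]$. Your write-up simply makes the underlying steps explicit---applying \eqref{growth} on the subinterval $[s,t]$, using $M^*(s,t-s)\leq M_0$ and the monotonicity of the $p$-variation seminorm to absorb everything into a bound depending only on $\ltn\omega\rtn_{p{\rm-var},[t_0,t_0+1]}$, and noting measurability via joint continuity---but there is no substantive difference in strategy.
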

\begin{proof}
	The proof follows directly from \eqref{phiest-new} for $\Phi$ and $\Psi$, and from \eqref{fBmest-new}, 
	with note that for the inverse flow $\Psi_\omega(s,t)^{\rm T} = \Phi_\omega(s,t)^{-1}$ 
	\[
	\sup \limits_{t_0 \leq s \leq t \leq t_0+ 1} \log^+ \|\Phi_\omega(s,t)^{-1}\| = \sup\limits_{t_0 \leq s \leq t \leq t_0+1} \log^+ \|\Psi_\omega(s,t)\|
	\]
	and that \eqref{fBmest-new} is still satisfied for all $s,t\in [t_0,t_0+1]$ due to the increment stationary property of $Z$.
\end{proof}
%=========================

 Notice that condition \eqref{fBmest-new} derives (${\textbf H}_3^\prime$) for almost all driving paths $\omega$ due to Birkhorff ergodic theorem. Moreover, $\Gamma_p(\omega)$ is a random variable in $L^r(\Omega,\mathcal{F},\bP)$ for all $r>0$. If the metric dynamical system $(\Omega, \mathcal{F}, \bP, (\theta_t)_{t\in \R})$ is ergodic, it is known that $\Gamma_p(\omega)=E\ltn Z \rtn^p_{p{\rm-var},[0,1]} $ almost surely. As a result, the estimate \eqref{lyaestimate}  implies the following theorem.
\begin{theorem}\label{LEbounded}
Under assumptions (${\textbf H}_1$) and (${\textbf H}_2$) and condition \eqref{fBmest-new}, for each $k=1,\dots, d$ the Lyapunov exponent $\lambda_k(\omega)$ is of finite moments of any order $r>0$. More precisely,
\[
E|\lambda_k(\omega)|^r \leq \eta^r E \Big[ 2 + \Big(\frac{2M_0}{\mu}\Big)^{p} (1+ \Gamma_p(\omega)) \Big]^r, \quad \forall k = 1,\dots,d.
\]
In particular, if the metric dynamical system $(\Omega, \mathcal{F}, \bP, (\theta_t)_{t\in \R})$ is ergodic, the Lyapunov spectrum is bounded a.s. by non-random constants as follow, 
 \[
|\lambda_k(\omega)| \leq \eta \Big[ 2 + \Big(\frac{2M_0}{\mu}\Big)^{p} (1+E\ltn Z \rtn^p_{p{\rm-var},[0,1]} ) \Big], \quad \forall k = 1,\dots,d.
\]
\end{theorem}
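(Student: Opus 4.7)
The plan is to derive Theorem~\ref{LEbounded} directly from the pathwise estimate \eqref{lyaestimate} in Theorem~\ref{thm2} by showing that, under the integrability hypothesis \eqref{fBmest-new}, hypothesis (${\textbf H}_3^\prime$) holds almost surely and the Cesaro limit $\Gamma_p(\omega)$ is in $L^r(\Omega,\mathcal{F},\bP)$ for every $r>0$.

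First, I would use the helix property $Z_{t+s}(\omega)=Z_s(\omega)+Z_t(\theta_s\omega)$ together with the shift identity \eqref{shift} to see that $\ltn Z(\cdot,\omega)\rtn^p_{p\text{-var},[k,k+1]} = \ltn Z(\cdot,\theta_k\omega)\rtn^p_{p\text{-var},[0,1]}$, so the sequence $\{\ltn Z\rtn^p_{p\text{-var},[k,k+1]}\}_{k\geq 0}$ is stationary under $\theta$. The integrability bound \eqref{fBmest-new} with exponent $r=p$ gives $E\ltn Z\rtn^p_{p\text{-var},[0,1]}\leq H(p,1)<\infty$, so Birkhoff's ergodic theorem applies and yields, for $\bP$-a.e.\ $\omega$,
\[
\Gamma_p(\omega)=\lim_{n\to\infty}\frac{1}{n}\sum_{k=0}^{n-1}\ltn Z(\cdot,\theta_k\omega)\rtn^p_{p\text{-var},[0,1]} = E\bigl[\ltn Z\rtn^p_{p\text{-var},[0,1]}\,\bigm|\,\mathcal{I}\bigr],
\]
where $\mathcal{I}$ is the $\theta$-invariant $\sigma$-algebra. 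In particular (${\textbf H}_3^\prime$) holds almost surely, so \eqref{lyaestimate} is applicable pathwise.

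Next I would bound the moments of $\Gamma_p$. For $r\geq 1$, Minkowski's inequality and the fact that the map $\omega\mapsto\ltn Z(\cdot,\omega)\rtn^p_{p\text{-var},[0,1]}$ is $\theta$-stationary give
\[
\Bigl\|\frac{1}{n}\sum_{k=0}^{n-1}\ltn Z(\cdot,\theta_k\omega)\rtn^p_{p\text{-var},[0,1]}\Bigr\|_{L^r(\bP)}\leq \bigl\|\ltn Z\rtn^p_{p\text{-var},[0,1]}\bigr\|_{L^r(\bP)} = \bigl(E\ltn Z\rtn^{pr}_{p\text{-var},[0,1]}\bigr)^{1/r}\leq H(pr,1)^{1/r},
\]
and Fatou's lemma applied to the almost sure limit gives $\|\Gamma_p\|_{L^r}\leq H(pr,1)^{1/r}<\infty$ by \eqref{fBmest-new}; for $0<r<1$ the same conclusion follows from Jensen's inequality. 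Combining this with \eqref{lyaestimate}, raising to the $r$-th power, and taking expectation yields the asserted moment bound on $|\lambda_k|$ for every $k$ and every $r>0$.

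Finally, for the ergodic case, the $\sigma$-algebra $\mathcal{I}$ is trivial, so the conditional expectation above collapses to the constant $E\ltn Z\rtn^p_{p\text{-var},[0,1]}$, and substituting into \eqref{lyaestimate} yields the stated deterministic almost-sure bound. The only slightly delicate point I anticipate is the rigorous justification that the Cesaro averages converge in $L^r$ (not merely almost surely), which I handle via the Minkowski/Fatou argument above rather than appealing to the $L^r$-version of Birkhoff's theorem; everything else is a direct application of Theorem~\ref{thm2}.
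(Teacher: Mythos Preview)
Your proposal is correct and follows exactly the route taken in the paper: the paper observes (in the paragraph immediately preceding the theorem) that \eqref{fBmest-new} together with Birkhoff's ergodic theorem yields (${\textbf H}_3^\prime$) almost surely with $\Gamma_p\in L^r$ for all $r>0$ (and $\Gamma_p=E\ltn Z\rtn^p_{p{\rm-var},[0,1]}$ in the ergodic case), and then simply invokes \eqref{lyaestimate}. Your Minkowski/Fatou argument for the $L^r$-integrability of $\Gamma_p$ is a welcome elaboration of a step the paper leaves implicit.
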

%===========================
\begin{remark}\label{remomega}
	 Assumption \eqref{fBmest-new} is satisfied in case $Z$ is a fractional Brownian motion, see \cite[Corollary 1.9.2]{mishura} with $H> \frac{1}{2}$. Indeed, applying Garsia-Rademich-Rumsey inequality, see \cite[Lemma 7.3, Lemma 7.4]{nualart3} we see that for any fixed $r\geq 1$ and $\frac{1}{2}<\nu < H$ 
	\begin{eqnarray*}
	E\ltn Z\rtn^r_{p{\rm-var},[s,t]} \leq |t-s|^{\nu r} E \big(\ltn Z \rtn_{\nu{\rm-Hol},[s,t]}\big)^r
		\leq  C_{\nu,H,q,m} |t-s|^{\nu r} |t-s|^{(H-\nu)r} = C_{\nu,H,q,m} |t-s|^{H r} . 
	\end{eqnarray*}
	Moreover,  it is known in \cite{mariaetal} that $Z$ can be defined on a metric dynamical system $(\Omega, \mathcal{F}, \bP, (\theta_t)_{t\in \R})$  which is ergodic. 
\end{remark}

\subsection{Almost sure Lyapunov regularity}
In this subsection, for simplicity of presentation we consider all the equations on the whole time line $\R$. The half-line case $\R^+$ can be easily treated in a similar manner.
 
We start the subsection with a very special situation in which the coefficient functions are autonomous, i.e. $A(\cdot) \equiv A, C(\cdot) \equiv C$. In this case, the stochastic two-parameter flow $\Phi_\omega(s,t)$ of \eqref{stochlin1} generates a linear random dynamical system $\Phi^\prime$ (see e.g. Arnold~\cite[Chapter 1]{arnold} for the definition of  random dynamical systems). Indeed, from \eqref{shift} and the fact that
\begin{eqnarray*}
	x(t) &=& x_0 + \int_0^s A x(u)du + \int_0^s C x(u)d\omega(u) + \int_s^t A x(u)du + \int_s^t C x(u)d\omega(u)\\
	&=& x(s) +  \int_0^{t-s} A x(u+s)du + \int_0^{t-s} C x(u+s)d\theta_s\omega(u),
\end{eqnarray*}
it follows due to the autonomy that $\Phi_\omega(s,t)= \Phi(t-s,\theta_s \omega)$. Hence $\Phi^\prime(t,\omega) := \Phi_\omega(0,t)$ satisfies the {\it cocycle property}
\[
\Phi^\prime(t+s,\omega) = \Phi^\prime(t,\theta_s\omega) \circ \Phi^\prime(s,\omega).
\]
Assign $t_0 = 0$, if follows from \eqref{integrability} that
\[
\sup_{t\in [0,1]} \log \|\Phi^\prime(t,\omega)^{\pm 1}\|  \in L^1(\Omega,\mathbb{P}).
\] 

By applying the multiplicative ergodic theorem (see Oseledets~\cite{Ose68} and Arnold~\cite[Chapter 3]{arnold}) for $\Phi^\prime$ generated from \eqref{stochlin1}, there exists a Lyapunov spectrum consisting of exact Lyapunov exponents provided by the multiplicative ergodic theorem and it coincides with the Lyapunov spectrum defined in Definition \ref{dfn.LE2flow}. In addition, the flag of Oseledets' subspaces coincides with the flag of Lyapunov spaces.

The same conclusions hold if the system is periodic with period $r$, i.e. $A(\cdot+ r) = A(\cdot), C(\cdot+r) = C(\cdot)$. In fact, we can prove that $\Phi_\omega(s,t) = \Phi_{\theta_r \omega}(s-r,t-r)$ and
\[
\Phi(nr,\omega) = \Phi(r,\theta_{(n-1)r}\omega) \circ \Phi((n-1)r,\omega),\ \forall n \in \Z.
\]
In this case, \eqref{stochlin1} generates a discrete random dynamical system $\{\Phi(nr,\omega)\}_{n\in \Z}$	which satisfies the integrability condition \eqref{integrability}. Hence all the conclusions of the MET hold and almost all the path-wise system is Lyapunov regular.

In general, it might not be true that system \eqref{stochlin1} is regular for almost sure $\omega$. However, under the further assumptions of $A, C$, we can construct a linear random dynamical system such that almost sure all the pathwise systems are Lyapunov regular. The construction uses the so-called {\it Bebutov flow}, as investigated by Millionshchikov \cite{mil68}, \cite{Mil68} (see also \cite{JPS87}, \cite{sacker-sell}, \cite{sell}).
Specifically, assume that $A$ satisfies a stronger condition that
$$
({\textbf H}_1^\prime) \quad \hat{A}:=\|A\|_{\infty,[0,\infty)} < \infty \quad\hbox{and}\quad \lim \limits_{\delta \to 0} 
\sup \limits_{|t-s| < \delta} |A(t)-A(s)| =0.\hspace*{4cm}
$$
Consider the shift dynamical system $S^A_t(A)(\cdot) := A(\cdot +t)$ in the space $\mathcal{C}^b = \mathcal{C}^b(\R,\R^{d \times d})$ of bounded and uniformly continuous matrix-valued continuous function on $\R$ with the supremum norm. The closed hull $\mathcal{H}^A:=\overline{\cup_t S_t(A)}$ in $\mathcal{C}^b$ is then compact, hence we can construct on $\mathcal{H}^A$ a probability structure such that $(\mathcal{H}^A,\cF^A,\mu^A, S^A)$ is a probability space where $\mu^A$ is a $S$-invariant probability measure, see e.g. \cite[Theorem 4.9, p. 63]{karatzas}. 

When applying Millionshchikov's approach of using Bebutov flows to our system \eqref{stochlin1}, we need to construct not only $(\mathcal{H}^A,\cF^A,\mu^A, S^A)$, but also $(\mathcal{H}^C,\cF^C,\mu^C, S^C)$, with a little more regularity condition for $C$. Recall that $\cC^{0,\alpha-\rm{Hol}}([a,b],\R^{d \times d})$ is the closure of smooth paths from $[a,b]$ to $\R^{d \times d}$ in $\alpha$-H\"older norm and $\cC^{0,\alpha-\rm{Hol}}(\R,\R^{d \times d})$ is the space of all $x: \R\to \R^{d \times d}$ such that $x|_I \in \cC^{0,\alpha-\rm{Hol}}(I,\R^{d \times d})$ for each compact interval $I\subset\R$, equipped  with the compact open topology given by the H\"older norm, i.e  the topology generated by metric
\[
d(x,y): = \sum_{m\geq 1} \frac{1}{2^m} (\|x-y\|_{\alpha,[-m,m]}\wedge 1).
\]
Following \cite[Chapter 2, p. 62]{karatzas}, for any $c \in \cC^{0,\alpha-\rm{Hol}}(\R,\R^{d \times d})$, 
any interval $[a,b]$ and $\delta >0$, we define the {\it module of $\alpha$-H\"older} on $[a,b]$:
\[
m^{[a,b]}(c,\delta) := \ltn c \rtn_{\alpha,\delta,[a,b]} = \sup_{a\leq s<t \leq b, t-s \leq \delta} \frac{|c(t)-c(s)|}{|t-s|^\alpha}. 
\]
By the same arguments as in \cite[Theorem 4.9, Theorem 4.10, p. 62-64]{karatzas} we get the following result, of which the proof is given in the Appendix. 
\begin{lemma}\label{compact}
	A set $\cH \subset \cC^{0,\alpha-\rm{Hol}}(\R,\R^k)$ has a compact closure if and only if the following conditions hold:
	\begin{eqnarray}
	\sup_{c \in \cH} |c(0)| &<& \infty, \label{mcompact1}\\
	\lim \limits_{\delta \to 0} \sup_{c \in \cH} m^{[a,b]}(c,\delta) &=&  0,\quad \forall [a,b]. \label{mcompact2}
	\end{eqnarray}
\end{lemma}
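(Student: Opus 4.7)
The plan is to reduce the statement to an Arzelà–Ascoli type statement on each compact interval $[-m,m]$ and then paste the results together by a diagonal argument. The metric $d$ is manifestly equivalent to the topology of $\alpha$-H\"older convergence on every compact interval $[-m,m]$, so $\cH$ has compact closure in $d$ if and only if, for each $m$, every sequence $(c_n)\subset \cH$ has a subsequence converging in $\|\cdot\|_{\alpha,[-m,m]}$ to some element of $\cC^{0,\alpha\text{-Hol}}(\R,\R^k)$. So it suffices to establish the compact-interval version: on a fixed $[a,b]$, a family $\cH|_{[a,b]}$ is relatively compact in $\cC^{0,\alpha\text{-Hol}}([a,b],\R^k)$ iff \eqref{mcompact1} and \eqref{mcompact2} hold.

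For the ``if'' direction, I would first note that \eqref{mcompact1} combined with \eqref{mcompact2} bounds both $\|c\|_{\infty,[a,b]}$ and $\ltn c\rtn_{\alpha,[a,b]}$ uniformly on $\cH$: choose $\delta_0$ so that $\sup_{c\in\cH} m^{[a,b]}(c,\delta_0)\le 1$; then chaining over intervals of length $\le \delta_0$ controls oscillation, and pairs $(s,t)$ with $|t-s|>\delta_0$ give a ratio bounded by $\|c\|_{\infty,[a,b]}/\delta_0^{\alpha}$. Given a sequence $(c_n)\subset \cH$, classical Arzelà–Ascoli extracts a subsequence converging uniformly to a limit $c$, which is automatically $\alpha$-H\"older with the same bound. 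To upgrade uniform to H\"older convergence, I would split
\[
\frac{|(c_n-c)(t)-(c_n-c)(s)|}{|t-s|^{\alpha}}
\]
according to $|t-s|\le \delta$ or $|t-s|>\delta$. The first range is bounded by $m^{[a,b]}(c_n,\delta)+m^{[a,b]}(c,\delta)$, which is $\le 2\sup_{k}m^{[a,b]}(c_k,\delta)+o(1)$ as $\delta\to 0$ (noting $m^{[a,b]}(c,\delta)\le\liminf_n m^{[a,b]}(c_n,\delta)$ by uniform convergence); the second is $\le 2\delta^{-\alpha}\|c_n-c\|_{\infty,[a,b]}\to 0$. Fix $\delta$ first, then $n$. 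The limit $c$ lies in $\cC^{0,\alpha\text{-Hol}}$ because the class is closed in the H\"older norm. A standard diagonal argument across $m=1,2,\ldots$ completes the sufficiency.

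For the ``only if'' direction, \eqref{mcompact1} is immediate from continuity of the evaluation $c\mapsto c(0)$ on the compact set $\overline{\cH}$. For \eqref{mcompact2}, fix $\eps>0$ and cover $\overline{\cH}$ with finitely many $\|\cdot\|_{\alpha,[a,b]}$-balls of radius $\eps/3$ centred at $c_1,\ldots,c_N\in\cC^{0,\alpha\text{-Hol}}([a,b],\R^k)$. By definition of $\cC^{0,\alpha\text{-Hol}}$ as the closure of smooth paths, choose smooth $\phi_i$ with $\|c_i-\phi_i\|_{\alpha,[a,b]}<\eps/3$. For $c\in\cH$ belonging to the $i$-th ball,
\[
m^{[a,b]}(c,\delta)\le \ltn c-c_i\rtn_{\alpha,[a,b]}+\ltn c_i-\phi_i\rtn_{\alpha,[a,b]}+m^{[a,b]}(\phi_i,\delta)\le \tfrac{2\eps}{3}+m^{[a,b]}(\phi_i,\delta),
\]
and for smooth $\phi_i$ one has $m^{[a,b]}(\phi_i,\delta)\le \|\phi_i'\|_{\infty,[a,b]}\,\delta^{1-\alpha}\to 0$. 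Taking $\delta$ small uniformly over the finitely many $\phi_i$ gives $\sup_{c\in\cH}m^{[a,b]}(c,\delta)<\eps$.

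The main obstacle is the upgrade from uniform to H\"older convergence of the subsequence in step two: a mere H\"older bound on the $c_n$ is not enough (H\"older balls are not compact in the H\"older norm), and this is precisely where the stronger equi-modulus hypothesis \eqref{mcompact2}, rather than mere boundedness of $\ltn c\rtn_{\alpha,[a,b]}$, is used. Everything else is classical Arzelà–Ascoli plus a diagonal extraction, essentially as in the corresponding H\"older Arzelà–Ascoli results adapted from \cite[Theorems 4.9, 4.10]{karatzas}.
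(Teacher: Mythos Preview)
Your proposal is correct and, in the sufficiency direction (conditions $\Rightarrow$ relative compactness), essentially identical to the paper's proof: both use Arzel\`a--Ascoli plus a diagonal extraction to get a uniform limit on each $[-m,m]$, and both upgrade uniform to $\alpha$-H\"older convergence by the same splitting of $\frac{|(c_n-c)(t)-(c_n-c)(s)|}{|t-s|^\alpha}$ according to whether $|t-s|\le\delta$ or $|t-s|>\delta$, invoking \eqref{mcompact2} on the first piece and uniform convergence on the second.

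The necessity direction (compact closure $\Rightarrow$ \eqref{mcompact2}) is where you differ. You use a finite $\eps/3$-net in the $\|\cdot\|_{\alpha,[a,b]}$-norm and approximate each center by a smooth path to force $m^{[a,b]}(\phi_i,\delta)\to 0$ explicitly. The paper instead first shows that $c\mapsto m^{[a,b]}(c,\delta)$ is continuous on $\cC^{0,\alpha\text{-Hol}}$ (via the Lipschitz estimate $|m^{[a,b]}(c,\delta)-m^{[a,b]}(c',\delta)|\le \ltn c-c'\rtn_{\alpha,[a,b]}$) and that $m^{[a,b]}(c,\delta)\to 0$ pointwise for each $c\in\cC^{0,\alpha\text{-Hol}}$; it then runs a Dini-type compactness argument on the decreasing closed sets $K_\delta=\{c\in\overline{\cH}:m^{[a,b]}(c,\delta)\ge\eps\}$, whose intersection is empty. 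Your argument is more hands-on and makes the role of smooth approximability transparent; the paper's is slightly slicker in that it packages that same information into the single pointwise statement $\lim_{\delta\to 0}m^{[a,b]}(c,\delta)=0$ (which it cites from \cite{friz}) and lets compactness do the rest. Both are standard and equally valid.
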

To construct a Bebutov flow for $C$, assume that there exists $\alpha > \frac{1}{q}$ such that $C \in \cC^{0,\alpha-\rm{Hol}}(\R, \R^{d\times d})$ satisfies a condition  stronger  than (${\textbf H}_2$):
\begin{equation}\label{bounded}
{\rm ({\textbf H}_2^\prime)}\quad \|C\|_{\infty,\R} = \sup_{ t\in \R} |C(t)|< \infty \qquad \text{and} \qquad \lim \limits_{\delta \to 0} \sup_{-\infty <s< t < \infty, |t-s|\leq \delta} \frac{|C(t) - C(s) |}{|t-s|^\alpha} = 0.
\end{equation}
Consider the set of translations $C_r (\cdot) := C(r + \cdot) \in \cC^{0,\alpha-\rm{Hol}}(\R, \R^{d\times d})$. Under conditions \eqref{bounded}, Lemma \ref{compact} concludes that the closure set $\cH^C:=\overline{\{C_r: r\in \R\}}$ is compact on the separable complete metric space $\cC^{0,\alpha-\rm{Hol}}(\R,\R^{d \times d})$, in fact $\theta_t$ also preserves the norm on $\cC^{0,\alpha-\rm{Hol}}(\R,\R^{d \times d})$. The shift dynamical system $S^C_t c(\cdot) = c(t + \cdot)$ maps $\cH^C$ into itself, hence by Krylov-Bogoliubov theorem \cite[Chapter VI, \S9]{NeSt49}, there exists at least one probability measure $\mu^C$ on $\cH^C$ that is invariant under $S^C$, i.e. $\mu^C (S^C_t \cdot) = \mu^C(\cdot)$, for all $t \in \R$. 

It makes sense then to construct the product probability space $\mathbb{B} = \mathcal{H}^A \times \mathcal{H}^C \times \Omega$ with the product sigma field $\cF^A \times \cF^C \times \cF$, the product measure $\mu^{\mathbb{B}}:=\mu^A \times \mu^C \times \bP$ and the product dynamical system  $\Theta = S^A \times S^C \times \theta$ given by 
\[
\Theta_t(\tilde{A},\tilde{C},\omega) := (S^A_t(\tilde{A}), S^C_t(\tilde{C}), \theta_t \omega). 
\]     
Now for each point $b = (\tilde{A},\tilde{C},\omega) \in \mathbb{B}$, the fundamental (matrix) solution $\Phi^*(t,b)$ of the equation
\begin{equation}\label{eqn.nonauto.YDE}
dx(t) = \tilde{A}(t)x(t)dt + \tilde{C}(t)x(t)d\omega(t), \quad x(0)=x_0\in R^d,
\end{equation}
defined by $\Phi^*(t,b)x_0 := x(t)$, satisfies the cocycle property due to the existence and uniqueness theorem and the fact that
\begin{eqnarray*}
	x(t+s) &=& x_0 + \int_0^s \tilde{A}(u) x(u)du + \int_0^s \tilde{C}(u) x(u)d\omega(u) \\
	&&+ \int_s^{t+s} \tilde{A}(u) x(u)du + \int_s^{t+s} \tilde{C}(u) x(u)d\omega(u)\\
	&=& x(s) +  \int_0^{t} S^A_s(\tilde{A})(u) x(u+s)du + \int_0^{t} S^C_s(\tilde{C})(u) x(u+s)d\theta_s\omega(u).
\end{eqnarray*}
Therefore the nonautonomous linear YDE \eqref{eqn.nonauto.YDE} generates a cocycle (random dynamical system) $\Phi^*: \R \times \mathbb{B} \times \R^d \to \R^d$ over the metric dynamical system $(\mathbb{B},\mu^{\mathbb{B}})$. 
Thus, starting from investigation of one linear stochastic nonautonomous YDE \eqref{stochlin1} we consider its $\omega$-wise and embed to a Bebutov flow using Millionshchikov's approach \cite{Mil68}, henceforth construct a random dynamical system over the product probability space for which the following statement holds.
\begin{theorem}[Millionshchikov theorem] \label{thm.Millionshchikov}
	Under assumptions (${\textbf H}_1^\prime$), (${\textbf H}_2^\prime$) and \eqref{fBmest-new}, the nonautonomous linear stochastic ($\omega$-wise) Young equation \eqref{eqn.nonauto.YDE} is Lyapunov regular for almost all $b \in \mathbb{B}$ in the sense of the probability measure $\mu^{\mathbb{B}}$.
\end{theorem}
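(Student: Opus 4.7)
The strategy is to recognize that the Bebutov construction already carried out in the preceding discussion promotes the family of pathwise YDEs \eqref{eqn.nonauto.YDE} to a measurable linear cocycle $\Phi^*(t,b)$ over the metric dynamical system $(\mathbb{B},\cF^A\otimes\cF^C\otimes\cF,\mu^{\mathbb{B}},\Theta)$, so the conclusion will follow from the multiplicative ergodic theorem of Oseledets. Indeed, once the MET applies we obtain, for $\mu^{\mathbb{B}}$-almost every $b\in\mathbb{B}$, an Oseledets splitting of $\R^d$ with exact (two-sided) Lyapunov exponents together with the identity $\sum_{i=1}^d d_i\lambda_i(b)=\lim_{t\to\infty}\tfrac{1}{t}\log|\det\Phi^*(t,b)|$; by Definition~\ref{dfn.Lregular} this is exactly $\sigma(b)=0$, i.e.\ Lyapunov regularity for the pathwise YDE attached to $b$.

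The only real verification is therefore the Oseledets integrability condition $\sup_{0\le s\le t\le 1}\log^+\|\Phi^*(s,t,b)^{\pm1}\|\in L^1(\mathbb{B},\mu^{\mathbb{B}})$. The point is that the uniform bounds $(\textbf{H}_1)$ and $(\textbf{H}_2)$ used in Lemma~\ref{lem3} transfer to every element of the hull. For the drift part, $(\textbf{H}_1^\prime)$ ensures that the shift $S^A$ preserves both $\|\cdot\|_{\infty,\R}$ and the modulus of uniform continuity, hence $\|\tilde A\|_{\infty,\R}\le\hat A$ for every $\tilde A\in\cH^A$. For the diffusion part, $(\textbf{H}_2^\prime)$ plus the criterion in Lemma~\ref{compact} shows that every $\tilde C\in\cH^C$ satisfies $\|\tilde C\|_{\infty,\R}\le\|C\|_{\infty,\R}$ and $\ltn\tilde C\rtn_{\alpha\rm{-Hol},\R}\le\ltn C\rtn_{\alpha\rm{-Hol},\R}$, which since $\alpha q>1$ forces a uniform bound $\|\tilde C\|_{q{\rm-var},[s,t]}\le\hat C$ for $|t-s|\le 1$. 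Consequently the constant $M_0$ in \eqref{M0} can be chosen independently of $(\tilde A,\tilde C)\in\cH^A\times\cH^C$, and applying the estimate \eqref{integrability} of Lemma~\ref{lem3} fiberwise in $(\tilde A,\tilde C)$ and integrating in $\omega$ using \eqref{fBmest-new} yields a finite bound under $\mu^{\mathbb{B}}$, since the $\omega$-factor is handled exactly as in Lemma~\ref{lem3} and the $(\tilde A,\tilde C)$-factors contribute only the deterministic constant.

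With the cocycle property (already derived above from the shift identity \eqref{shift}) and the integrability condition in hand, the two-sided MET applied to $\Phi^*$ over $(\mathbb{B},\mu^{\mathbb{B}},\Theta)$ produces a $\Theta$-invariant set $\mathbb{B}_0$ of full $\mu^{\mathbb{B}}$-measure on which the Lyapunov exponents exist as genuine limits and the determinant identity $\sum d_i\lambda_i=\lim\tfrac1t\log|\det\Phi^*|$ holds; on $\mathbb{B}_0$ the coefficient of nonregularity vanishes, which is the assertion of the theorem. The main subtlety I expect is not the MET step itself but the uniform-bound transfer to the hull: one has to be careful that the compact-open Hölder topology used to build $\cH^C$ really does preserve a global $\alpha$-Hölder bound on $\R$, which is where $(\textbf{H}_2^\prime)$ and Lemma~\ref{compact} are genuinely needed rather than a purely local statement.
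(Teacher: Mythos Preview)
Your proposal is correct and follows essentially the same route as the paper: verify the Oseledets integrability condition for the cocycle $\Phi^*$ over $(\mathbb{B},\mu^{\mathbb{B}},\Theta)$ by invoking the estimate \eqref{integrability}, then apply the multiplicative ergodic theorem to conclude Lyapunov regularity $\mu^{\mathbb{B}}$-almost everywhere. In fact you supply more detail than the paper does, since you spell out why the constant $M_0$ can be chosen uniformly over the hull $\cH^A\times\cH^C$ (using that $(\textbf{H}_1^\prime)$, $(\textbf{H}_2^\prime)$ and the compact-open H\"older topology preserve the global sup and H\"older bounds, and that $\alpha q>1$ controls the $q$-variation), a point the paper's two-line proof leaves implicit.
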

\begin{proof}
The integrability condition for the product probability measure $\mu^\mathbb{B}$ is a direct consequence of \eqref{integrability}. Hence all the conclusions of the multiplicative ergodic theorem hold for almost all $b \in \mathbb{B}$, which implies the Lyapunov regularity of \eqref{eqn.nonauto.YDE} for almost all $b\in \mathbb{B}$ in the sense of the probability measure $\mu^{\mathbb{B}}$.
\end{proof}
\begin{remark}
	(i) In \cite{mil68} and \cite{Mil68}, Millionshchikov proved the Lyapunov regularity (almost surely with respect to an arbitrary invariant measure of the Bebutov flow on $\cH^A$ generated by the ordinary differential equation $\dot{x} = A(t)x$), using the triangularization scheme provided by the Perron theorem for ordinary differential equations. In other words,  Millionshchikov obtained an alternative proof of the multiplicative ergodic theorem (see also Arnold~\cite[p.~112]{arnold}, Johnson, Palmer and Sell~\cite{JPS87}). In fact, Millionshchikov proved a bit stronger property than Lyapunov regularity that, almost all such systems are statistically regular. 
	
	(ii) Theorem~\ref{thm.Millionshchikov} can be viewed as a version of multiplicative ergodic theorem for a nonautonomous linear stochastic Young differential equation which uses combination of Millionshchikov \cite{Mil68} approach (topological setting using Bebutov flow for differential equation) and Oseledets \cite{Ose68} approach (measurable setting with probability space $(\Omega,\mathcal{F},\bP )$).
	
	(iii) It is important to note that, although for almost all $b\in \mathbb{B}$ the  nonautonomous linear stochastic ($\omega$-wise) Young equation \eqref{eqn.nonauto.YDE} is Lyapunov regular, it does not follow that the original system \eqref{stochlin1} is Lyapunov regular.
\end{remark}

\subsection*{Discussions on the non-randomness of Lyapunov exponents}

Since we are dealing with stochastic equation YDE \eqref{stochlin1} it is important and interesting to know whether its Lyapunov spectrum is nonrandom. We give here a brief discussion on this problem.

We remind the readers of the non-randomness of Lyapunov exponents $\lambda_1(\omega),\ldots, \lambda_d(\omega)$ for systems driven by standard Brownian noises (see e.g. \cite{cong, congquynh09}). Since Theorem~\ref{thm2}  still holds in that situation and the definition of Lyapunov exponent does not depend on the initial time $t_0$, it follows that $\lambda_k(\omega)$ is measurable with respect to the sigma algebra generate by $\{W(n+1)-W(n): n\geq m\}$ for any $m\geq 0$, thus measurable w.r.t. the tail sigma field $\cap_m \sigma (\{W(n+1)-W(n): n\geq m\})$. Due to pairwise independence of all variables of the form $W(n+1)-W(n)$, one can apply Kolmogorov's zero-one law \cite{karatzas} to conclude that Lyapunov exponents are in fact non-random constants. Thus we have nonrandomness of the Lyapunov spectrum in the case of nonautonomous linear stochastic differential equations driven by standard Brownian motions. Note that here the Lyapunov exponents of the systems can be nonexact.

In general, a stochastic process $Z$ does not have independent increments, thus it is difficult to construct such a filtration and to apply the Kolmogorov's zero-one law. However, the second case of nonrandom Lyapunov spectrum is the case of autonomous or periodic linear stochastic Young equations discussed at the beginning of this subsection where we may apply the classical Oseledets MET by exploiting autonomy or periodicity of the system. Note that in this case the probability measure is the probability measure of the process $Z$ and the Lyapunov exponents of the systems are exact.

The third case is triangular nonautonomous linear stochastic Young differential equations treated in Section \ref{Lyasec}. In this case, due to the triangular form of the system we may solve it successively and use explicit formula of the solution to derive  Theorem~\ref{triangularcase} showing that the Lyapunov spectrum consists of exact Lyapunov exponents and is nonrandom. Note that in this case the system in nonautonomous, the measure is the probability measure  of the process $Z$ and the Lyapunov exponents of the systems are exact.

For a general system \eqref{stochlin1} which satisfies assumptions (${\textbf H}_1^\prime$), (${\textbf H}_2^\prime$) and \eqref{fBmest-new}, the statement on the non-randomness of Lyapunov spectrum depends on whether the product dynamical system $\Theta$ is ergodic on the product probability measure $\mu^\mathbb{B}$, as a consequence of the Birkhorff ergodic theorem. The answer is then affirmative in case $S^A$ and $S^C$ are weakly mixing and $\theta$ is ergodic, i.e. $S^A$ (respectively $S^C$) satisfies the condition 
\[
\lim \limits_{n \to \infty} \frac{1}{n} \sum_{j=0}^{n-1} \Big|\mu^A\Big(S^A_j(Q_1) \cap Q_2\Big) - \mu^A (Q_1) \mu^A(Q_2)\Big| =0,\quad \forall Q_1\ne Q_2 \in \mathcal{F}^A
\]
(respectively for $S^C$). It is well known (see e.g. Ma\~n\'e~\cite[p. 147]{mane}) that the weak mixing of $S^A$ and $S^C$ implies the weak mixing of the product dynamical system $S^A \times S^C$ which, together with the ergodicity of $\theta$, implies the ergodicity of the product flow $\Theta$. The problem on non-randomness of Lyapunov spectrum can therefore be translated into the question on the weak-mixing of dynamical systems $S^A$ and $S^C$.  	

%================================================

\section*{Acknowledgments}

This research is partly funded by Vietnam National Foundation for Science and Technology Development (NAFOSTED) under grant number FWO.101.2017.01.

%===================================================

\section{Appendix}\label{sec.appendix}
\begin{proof}[{\bf Proposition \ref{existence}}]
	The proof follows the same techniques in \cite{nualart3} and in \cite{congduchong17} with some modifications. First, consider $x\in \cC^{q{\rm-var}}([a,b],\R^d)$ with some $[a,b] \subset[t_0,t_0+T]$. Define the mapping given by
	\begin{eqnarray}
		F(x)(t) &= &x(a) + I(x)(t)+J(x)(t)\nonumber\\
		&:=& x(a) + \int_{a}^t A(s)x(s) ds +\int_{a}^t C(s)x(s)d\omega(s), \quad\forall t\in [a,b].\label{eqn.F}
	\end{eqnarray}
	Then $F(x)\in   \cC^{p{\rm-var}}([a,b],\R^{d})$ and direct computations show that for every $[s,t]\subset [a,b]$
	\begin{eqnarray}
		\ltn Fx\rtn_{p{\rm-var},[s,t]}&\leq& P \| x\|_{q{\rm-var},[s,t]} \label{(i)}\\
		\ltn Fx-Fy\rtn_{p{\rm-var},[s,t]}&\leq&   P \| x-y\|_{q{\rm-var},[s,t]}, \label{(ii)}
	\end{eqnarray}
	where 
	\[
	P:=\|A\|_{\infty,[t_0,t_0+T]}(t-s)+2K\|C\|_{q{\rm-var},[t_0,t_0+T]}\ltn\omega\rtn_{p{\rm-var},[s,t]}, 
	\quad  K \; \hbox{is defined in \eqref{constK}}.
	\]
	Similar to \cite{congduchong17}, for a given $0<\mu<\min\{1,M^*\}$, where $M^*$ is defined by \eqref{M*}, we construct the sequence of strictly increase greedy times $\tau_n$ with $\tau_0 = 0$ satisfying
	\begin{equation}\label{stoptime1}
		(\tau_{k}-\tau_{k-1})+\ltn \omega\rtn_{p{\rm-var},[\tau_{k-1},\tau_k]}=\mu/M^*.
	\end{equation}
	Then $\tau_k\to \infty$ as $k\to \infty$ (see the proof in \cite{congduchong17}). Denote by $N(a,b,\omega)$ the number of $\tau_k$ in the finite interval $(a,b]$, then from \cite{congduchong17}  
	\begin{eqnarray}\label{N}
		N(t_0,t_0+T,\omega)-1&\leq&  \left(\frac{2M^*}{\mu}\right)^p(T^p+\ltn \omega\rtn^p_{p{\rm-var},[t_0,t_0+T]}).
	\end{eqnarray}
	Without loss of generality assume that $t_0=0$. Define the set 
	\[
	B=\{x\in \cC^{q{\rm-var}}([\tau_0,\tau_1],\R^d)|\ x(\tau_0)= x_{0},\;\|x\|_{q{\rm-var},[\tau_0,\tau_1]}\leq \frac{1}{1-\mu}|x_{0}|\}.
	\]
	It is easy to check that $B$ is a closed ball in Banach space $ \cC^{q{\rm-var}}([\tau_0,\tau_1],\R^d)$. Using \eqref{(i)}, \eqref{stoptime1} and the fact that $p<q$, we have 
	\begin{eqnarray*}
		\| F(x)\|_{q{\rm-var},[\tau_0,\tau_1]}&\leq& |x_{0}| + M^* (\tau_1-\tau_0 + \ltn \omega\rtn_{p{\rm-var},[\tau_0,\tau_1]} ) \|x\|_{q{\rm-var},[\tau_0,\tau_1]}\\
		&\leq& |x_{0}| +\mu \|x\|_{q{\rm-var},[\tau_0,\tau_1]}\\
		&\leq& \frac{1}{1-\mu}|x_{0}|, \ \text{for each\ } x\in B. 
	\end{eqnarray*}
	Hence, $F:B\rightarrow B$. On the other hand, by \eqref{(ii)} and \eqref{stoptime1}, for any $x,y\in B$
	$$
	\| F(x)-F(y)\|_{q{\rm-var},[\tau_0,\tau_1]} \leq \mu \| x-y\|_{q{\rm-var},[\tau_0,\tau_1]}.
	$$
	Since $\mu<1$, $F$ is a contraction mapping on $B$. We conclude that there exists a unique fixed point of $F$ in $B$ or there exists local solution of \eqref{linearfSDE} on $[\tau_0,\tau_1]$. By induction we obtain the solution on $[\tau_i,\tau_{i+1}]$ for $i=1,..,N(0,T,\omega)-1$ and finally on $[\tau_{N(0,T,\omega)-1},T]$. The global solution of \eqref{linearfSDE} then exists uniquely. From \eqref{(i)} it is obvious that the solution is in  $ \cC^{p{\rm-var}}([t_0,t_0+T],\R^d)$.  
	Estimate \eqref{growth} can then be derived using similar arguments to \cite[Remark 3.4iii]{congduchong17}. In fact,
	\[
	\|x\|_{\infty,[\tau_0,\tau_1]}\leq \|x\|_{q{\rm-var},[\tau_0,\tau_1]} \leq\frac{1}{1-\mu}|x_{0}|,
	\]
	thus by induction, it is evident that 
	\[
	\|x\|_{\infty,[t_0,t_0+T]}\leq |x_{0}| \left(\frac{1}{1-\mu}\right)^{N(t_0,t_0+T,\omega) +1}	\leq |x_{0}| e^{\eta [N(t_0,t_0+T,\omega) +1]}.
	\]
	Combining to \eqref{N}, we obtain \eqref{growth}.\\
	Since 
	\[
	\ltn x\rtn_{q{\rm-var},[s,t]}=\ltn F(x)\rtn_{q{\rm-var},[s,t]} \leq M^* (t-s + \ltn \omega\rtn_{p{\rm-var},[s,t]})\| x\|_{q{\rm-var},[s,t]}
	\]
	for all $s<t\in [t_0, t_0+T]$, by proving similarly to \cite[Corollary 3.5]{congduchong17} we obtain
	\begin{eqnarray*}
		\ltn x\rtn_{q{\rm-var},[t_0,t_0+T]}&\leq &|x_{0}| e^{(1+\eta)[3+ (\frac{2M^*}{\mu})^p(T^p+\ltn \omega\rtn^p_{p{\rm-var},[t_0,t_0+T]})]}.%
	\end{eqnarray*}

%{\bf .}\\
%\begin{comment}
	Next, fix $(x_0,\omega)$ and consider $(x_0',\omega')$ in the ball centered at $(x_0,\omega)$ with radius 1.\\
	Put $x(\cdot) = x(\cdot, t_0,x_0,\omega)$, $x'(\cdot) = x(\cdot, t_0,x'_0,\omega')$ and $y(\cdot) =x(\cdot)-x'(\cdot) $. By  \eqref{growth} and \eqref{growth1}, we can choose a positive number $D_1$ (depending on $M^*,x_0,\omega$) such that 
	$$\|x\|_{p{\rm-var},[t_0,t_0+T]}, \|x'\|_{p{\rm-var},[t_0,t_0+T]}\leq D_1 .$$
%	Firstly, without losing generality assuming that $a'\geq a$, we then have
%	\begin{eqnarray*}
%		|y(a')| &= & |x(a')-x_0'|\\
%		&\leq & |x_0 -x_0'| + \left|\int_a^{a'}A(s)x(s)ds\right| +\left|\int_a^{a'} C(s)x(s)d\omega(s)\right|\\
%		&\leq & |x_0 -x_0'| + D_1M^*|a-a'|  +D_1M^*\ltn \omega\rtn_{p{\rm-var},[a,a']}.
%	\end{eqnarray*}
	 We have
	\begin{eqnarray*}
		|y(t)-y(s)|&\leq & \Big|\int_s^t A(u)y(u)du\Big| + \Big|\int_s^t C(u)y(u)d\omega(u)\Big| + \Big|\int_s^t C(u)x'(u)d(\omega(u)-\omega'(u))\Big|\\
		&\leq& \|A\|_{\infty,[t_0,t_0+T]}\|y\|_{\infty,[s,t]}(t-s) + 2K\|C\|_{q{\rm-var},[t_0,t_0+T]}\|y\|_{p{\rm-var},[s,t]}\ltn\omega\rtn_{p{\rm-var},[s,t]}\\
		&& + 2K\|C\|_{q{\rm-var},[t_0,t_0+T]}\|x'\|_{p{\rm-var},[s,t]}\ltn\omega-\omega'\rtn_{p{\rm-var},[s,t]} \\
		&\leq &  M^* (t-s+\ltn\omega\rtn_{p{\rm-var},[s,t]})\|y\|_{p{\rm-var},[s,t]}+M^*\|x'\|_{p{\rm-var},[s,t]}\ltn\omega-\omega'\rtn_{p{\rm-var},[s,t]},
	\end{eqnarray*}
	which yields
	\begin{eqnarray*}
		\ltn y\rtn_{p{\rm-var},[s,t]} \leq M^* (t-s+\ltn\omega\rtn_{p{\rm-var},[s,t]})\|y\|_{p{\rm-var},[s,t]}+M^*\|x'\|_{p{\rm-var},[t_0,t_0+T]}\ltn\omega-\omega'\rtn_{p{\rm-var},[s,t]} 
	\end{eqnarray*}
	By applying \cite[Corollary 3.5]{congduchong17}, we obtain
	\begin{eqnarray*}
		\ltn y\rtn_{p{\rm-var},[t_0,t_0+T]} &\leq& (|y(t_0)|+M^*\|x'\|_{p{\rm-var},[t_0,t_0+T]}\ltn\omega-\omega'\rtn_{p{\rm-var},[t_0,t_0+T]}) e^{D_2(T^p+\ltn\omega\rtn^p_{p{\rm-var},[t_0,t_0+T]})}\\
		&\leq& \left( |y(t_0)|+D_1M^* \ltn\omega-\omega'\rtn_{p{\rm-var},[t_0,t_0+T]}\right)e^{D_2(T^p+\ltn\omega\rtn^p_{p{\rm-var},[t_0,t_0+T]})}\\
		&\leq & D_3( |x_0 -x_0'|+ \ltn \omega\rtn_{p{\rm-var},[a,a']}+ \ltn\omega-\omega'\rtn_{p{\rm-var},[t_0,t_0+T]}),
	\end{eqnarray*}
	where $D_2, D_3$ are constant depending on $x$ and $M^*$. Therefore
	\begin{eqnarray*}
		\| y\|_{p{\rm-var},[t_0,t_0+T]} &\leq& |y(t_0)| +  \ltn y\rtn_{p{\rm-var},[t_0,t_0+T]}\\
		&\leq& D_4(|x_0 -x_0'|+ \ltn \omega\rtn_{p{\rm-var},[a,a']}+ \ltn\omega-\omega'\rtn_{p{\rm-var},[t_0,t_0+T]}),
	\end{eqnarray*}
	with some constant $D_4$, which proves the continuity of $X$.
	%\end{comment}
\end{proof}
%=================
\subsubsection*{Young integral on infinite domain} Consider $f:\R^+\rightarrow \R$  such that $\int_a^b f(s)d\omega(s)$ exists for all $a<b\in \R^+$. Fix $t_0\geq 0$, we define $\int_{t_0}^\infty f(s)d\omega(s)$ as the limit $\displaystyle\lim_{t\to\infty}\int_{t_0}^{t}f(s)d\omega(s)$ if the limit exists and is finite. In this case, $$\int_0^{t_0}f(s)d\omega(s) = \int_0^{\infty}f(s)d\omega(s)-\int^\infty_{t_0}f(s)d\omega(s)$$
By assumption (${\textbf H}_3$) the sequence $\Big\{\frac{ \ltn\omega\rtn_{p{\rm-var},[k,k+1]}}{k},\; k\geq 1 \Big \}$ is bounded.
\begin{lemma}\label{lemma11}
	Consider $G(t) = \int_0^tg(s)d\omega(s)$, where $g$  is of bounded  $q-$variation function on every compact interval. If $\chi(g(t)),\;\chi(\ltn g\rtn_{q{\rm-var},[t,t+1]})\leq \lambda\in [0,+\infty)$ then 
	\[ 
	\chi(G(t)), \chi(\ltn G\rtn_{q{\rm-var},[t,t+1]})\leq \lambda.
	\]
\end{lemma}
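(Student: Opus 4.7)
The plan is to first upgrade the pointwise Young-Loeve estimate to a $q$-variation bound for $G$ on each unit interval, and then obtain the pointwise Lyapunov bound for $G(t)$ itself by telescoping. Fix $\varepsilon>0$. The hypotheses $\chi(g)\leq\lambda$ and $\chi(\ltn g\rtn_{q\text{-var},[t,t+1]})\leq\lambda$ supply a constant $D=D(\varepsilon)$ with
\[
|g(t)|\leq De^{(\lambda+\varepsilon)t},\qquad \ltn g\rtn_{q\text{-var},[t,t+1]}\leq De^{(\lambda+\varepsilon)t}\qquad(t\geq0),
\]
from which $\|g\|_{\infty,[t,t+1]}\leq|g(t)|+\ltn g\rtn_{q\text{-var},[t,t+1]}\leq 2De^{(\lambda+\varepsilon)t}$. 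Assumption $({\textbf H}_3)$ yields $\ltn\omega\rtn_{p\text{-var},[t,t+1]}^{p}=o(t)$, so $\ltn\omega\rtn_{p\text{-var},[t,t+1]}\leq e^{\varepsilon t}$ for $t$ large enough.

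The crux of the argument is the intermediate inequality, valid on any compact interval $[a,b]$:
\[
\ltn G\rtn_{q\text{-var},[a,b]}\leq\ltn G\rtn_{p\text{-var},[a,b]}\leq C_{p,q}\bigl(\|g\|_{\infty,[a,b]}+\ltn g\rtn_{q\text{-var},[a,b]}\bigr)\ltn\omega\rtn_{p\text{-var},[a,b]}.
\]
The first inequality follows from $p\leq q$ applied to $\ell^{p}$-type sums of partition differences. For the second, I would apply the Young-Loeve estimate \eqref{YL0} on every atom $[s_{i},s_{i+1}]$ of an arbitrary partition of $[a,b]$, take the $p$th power via $(u+v)^{p}\leq 2^{p-1}(u^{p}+v^{p})$, and control the cross term $\sum_{i}\ltn g\rtn_{q\text{-var},[s_{i},s_{i+1}]}^{p}\ltn\omega\rtn_{p\text{-var},[s_{i},s_{i+1}]}^{p}$ via Hölder's inequality with conjugate exponents $q/p$ and $q/(q-p)$, combined with super-additivity of the $p$- and $q$-variations, to collapse the sum to $\ltn g\rtn_{q\text{-var},[a,b]}^{p}\ltn\omega\rtn_{p\text{-var},[a,b]}^{p}$. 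Plugging in $[a,b]=[t,t+1]$ and the exponential bounds above gives
\[
\ltn G\rtn_{q\text{-var},[t,t+1]}\leq 3C_{p,q}De^{(\lambda+\varepsilon)t}\cdot e^{\varepsilon t}\leq 3C_{p,q}De^{(\lambda+2\varepsilon)t},
\]
so $\chi(\ltn G\rtn_{q\text{-var},[t,t+1]})\leq\lambda+2\varepsilon$, and letting $\varepsilon\downarrow0$ proves the second claim.

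For the first bound I would telescope: for integer $n\geq1$,
\[
|G(n)|\leq\sum_{k=0}^{n-1}|G(k+1)-G(k)|\leq\sum_{k=0}^{n-1}\ltn G\rtn_{q\text{-var},[k,k+1]}\leq D'\sum_{k=0}^{n-1}e^{(\lambda+\varepsilon)k}.
\]
Since $\lambda\geq 0$, this geometric-type sum is dominated by a constant multiple of $e^{(\lambda+\varepsilon)n}$ (or by $D'n$ in the edge case $\lambda=\varepsilon=0$, which still gives exponent $0=\lambda$). For non-integer $t$, $|G(t)|\leq|G(\lfloor t\rfloor)|+\ltn G\rtn_{q\text{-var},[\lfloor t\rfloor,\lfloor t\rfloor+1]}$ admits the same exponential control, so $\chi(G(t))\leq\lambda+\varepsilon$, and sending $\varepsilon\downarrow0$ completes the proof.

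The main obstacle is the intermediate $q$-variation bound for $G$ itself: Young-Loeve only governs a single increment, and upgrading it to a partition-level variation bound on $[a,b]$ requires the careful Hölder plus super-additivity manipulation sketched above to collapse the cross terms into the clean product $\ltn g\rtn_{q\text{-var},[a,b]}\ltn\omega\rtn_{p\text{-var},[a,b]}$. Once this estimate is in hand, the rest is a routine application of the hypothesized sub-exponential growth of $g$, $\ltn g\rtn_{q\text{-var}}$ and (via $({\textbf H}_{3})$) of $\ltn\omega\rtn_{p\text{-var},[t,t+1]}$, together with the telescoping step which crucially uses the non-negativity $\lambda\geq 0$ so that the geometric sum does not dominate the exponential factor.
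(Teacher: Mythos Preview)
Your argument is correct. Both you and the paper rest on the same pillars: the Young--Loeve increment estimate, super-additivity of variation controls, and the growth information on $\ltn\omega\rtn_{p\text{-var},[k,k+1]}$ coming from $({\textbf H}_3)$. The differences are organisational rather than essential.

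First, you reverse the order: you establish the $q$-variation bound for $G$ on $[t,t+1]$ and then telescope to get the pointwise bound, whereas the paper telescopes $|G(t)|\leq\sum_k\big|\int_k^{k+1}g\,d\omega\big|$ directly and treats the $q$-variation estimate separately afterwards. Second, for the $q$-variation step you prove a general inequality $\ltn G\rtn_{p\text{-var},[a,b]}\leq C_{p,q}(\|g\|_{\infty,[a,b]}+\ltn g\rtn_{q\text{-var},[a,b]})\ltn\omega\rtn_{p\text{-var},[a,b]}$ via the H\"older/super-additivity manipulation. The paper avoids this: on the fixed unit interval $[t_0,t_0+1]$ it simply freezes $\|g\|_\infty$ and $\ltn g\rtn_{q\text{-var}}$ as constants $\leq 2D_1e^{(\lambda+\varepsilon/2)(t_0+1)}$, so that Young--Loeve gives $|G(t)-G(s)|\leq C(t_0)\ltn\omega\rtn_{p\text{-var},[s,t]}$ for all $s,t\in[t_0,t_0+1]$, and the $q$-variation bound follows immediately from $q>p$. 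Third, the paper uses the cruder consequence of $({\textbf H}_3)$ that $\ltn\omega\rtn_{p\text{-var},[k,k+1]}/k$ is bounded (so $\ltn\omega\rtn_{p\text{-var},[k,k+1]}$ is at most linear in $k$), and absorbs the resulting polynomial factor into the exponential; you use the finer statement $\ltn\omega\rtn_{p\text{-var},[t,t+1]}\leq e^{\varepsilon t}$.

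Your route buys a reusable lemma (the $p$-variation estimate for the Young integral on arbitrary $[a,b]$) at the cost of the H\"older argument; the paper's route is shorter because on a unit interval one never needs to collapse the cross term, only to pull the $g$-dependent factor out as a constant. Two minor points: your telescoping bound should carry $e^{(\lambda+2\varepsilon)k}$ rather than $e^{(\lambda+\varepsilon)k}$ (harmless after $\varepsilon\downarrow0$), and note that $({\textbf H}_3)$ is literally stated for integer $n$, so to handle non-integer $t$ you need the observation (which the paper makes) that $\ltn\omega\rtn_{p\text{-var},[t,t+1]}$ is controlled by $\ltn\omega\rtn_{p\text{-var},[n_0,n_0+1]}+\ltn\omega\rtn_{p\text{-var},[n_0+1,n_0+2]}$ for $n_0=\lfloor t\rfloor$.
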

%================================
\allowdisplaybreaks 
\begin{proof}
Since $\chi(g(t)),\;\chi(\ltn g\rtn_{q{\rm-var},[t,t+1]})\leq \lambda$, for any $\varepsilon>0$, there exists $D_1=D_1(\varepsilon)$ such that $|g(s)|\leq D_1e^{(\lambda+\varepsilon/2)s}$, $\ltn g\rtn_{q{\rm-var},[s,s+1]}\leq D_1e^{(\lambda+\varepsilon/2)s}$ for all $s>0$.
Then
	\begin{eqnarray*}
			|G(t)|&\leq &\sum_{k=0}^{\lfloor t\rfloor-1}\Big|\int_k^{k+1}g(s)d\omega(s)\Big|+ \Big|\int_{\lfloor t\rfloor}^tg(s)d\omega(s)\Big| \\
			&\leq &K \sum_{k=0}^{\lceil t\rceil-1}\ltn\omega\rtn_{p{\rm-var},[k,k+1]} (|g(k)|+\ltn g\rtn_{q{\rm-var},[k,k+1]})\\
			&\leq &2KD_1 \sum_{k=0}^{\lceil t\rceil-1}\frac{\ltn \omega\rtn_{p{\rm-var},[k,k+1]}}{k} ke^{(\lambda+\varepsilon/2)k}\\
			&\leq &2KD_1 \sup_{k\geq 1}\frac{\ltn \omega\rtn_{p{\rm-var},[k,k+1]}}{k}    (t+1)^2e^{t(\lambda+\varepsilon/2)}\\
			&\leq & D_2e^{(\lambda+\varepsilon)t},
	\end{eqnarray*}
	where $D_2$ is a generic constant depends on $\varepsilon$.
	This yields $\chi(G(t))\leq \lambda$.
	
Next, fix $t_0> 0$ then $[t_0,t_0+1]\subset [n_0,n_0+2]$ with some $n_0\in \N$. For each $s,t\in [t_0,t_0+1]$ we have 
		\begin{eqnarray*}
			|G(t)-G(s)|&\leq & K\ltn\omega\rtn_{p{\rm-var},[s,t]} (\|g\|_{\infty,[s,t]}+\ltn g\rtn_{q{\rm-var},[s,t]})\\
			&\leq & 2KD_1\ltn\omega\rtn_{p{\rm-var},[s,t]} e^{(\lambda+\varepsilon/2)(t_0+1)}.
		\end{eqnarray*}
Hence
\begin{eqnarray*}
			\ltn G\rtn_{q{\rm-var},[t_0,t_0+1]}&\leq &2 KD_1 \ltn\omega\rtn_{p{\rm-var},[t_0,t_0+1]} e^{(\lambda+\varepsilon/2)(t_0+1)}\\
			&\leq &2.2^{\frac{p-1}{p}}KD_1 \frac{(\ltn \omega\rtn_{p{\rm-var},[n_0,n_0+1]}+\ltn \omega\rtn_{p{\rm-var},[n_0+1,n_0+2]})}{t_0+1}(t_0+1) e^{(\lambda+\varepsilon/2)(t_0+1)}\\
			&\leq & D_2 e^{(\lambda+\varepsilon)t_0},
		\end{eqnarray*}
which implies $\chi(\ltn G\rtn_{q{\rm-var},[t,t+1]})\leq \lambda$. 
\end{proof}
%====================================================================
\begin{lemma}\label{lemma12}
	Let $g$ be of bounded  $q-$variation function on every compact interval,  
satisfying\\ $\chi(g(t)),\;\chi(\ltn g\rtn_{q{\rm-var},[t,t+1]})\leq -\lambda\in (-\infty,0)$. Then the integral $G(t) := \int_t^\infty g(s)d\omega(s)$ exists for all $t\in \R^+$ and 
\[
\chi(G(t)), \chi(\ltn G\rtn_{q{\rm-var},[t,t+1]})\leq -\lambda.
\]
\end{lemma}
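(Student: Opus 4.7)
The strategy is to mirror the proof of Lemma~\ref{lemma11}, now summing over the tail $[\lceil t\rceil,\infty)$ in place of the head $[0,\lceil t\rceil]$, and exploiting the exponential decay of $g$ against the at-most-sub-exponential growth of $\ltn\omega\rtn_{p{\rm-var},[k,k+1]}$ guaranteed by (${\textbf H}_3$). Fix $\varepsilon\in(0,\lambda)$ and use the hypothesis to choose $D_1=D_1(\varepsilon)$ so that $|g(s)|\leq D_1 e^{(-\lambda+\varepsilon/2)s}$ and $\ltn g\rtn_{q{\rm-var},[s,s+1]}\leq D_1 e^{(-\lambda+\varepsilon/2)s}$ for all $s\geq 0$. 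Also, as noted just before Lemma~\ref{lemma11}, (${\textbf H}_3$) provides a constant $C_\omega:=\sup_{k\geq 1}\ltn\omega\rtn_{p{\rm-var},[k,k+1]}/k<\infty$.

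First I would establish that $G(t)$ is well-defined by showing the series $\sum_{k=n}^\infty \int_k^{k+1} g(s)\,d\omega(s)$ converges absolutely for every integer $n\geq 0$. The Young--Loeve estimate \eqref{YL0} on each unit interval gives
\[
\Big|\int_k^{k+1} g(s)\,d\omega(s)\Big|\leq K\,\ltn\omega\rtn_{p{\rm-var},[k,k+1]}\bigl(|g(k)|+\ltn g\rtn_{q{\rm-var},[k,k+1]}\bigr)\leq 2KD_1C_\omega\,k\,e^{(-\lambda+\varepsilon/2)k},
\]
and since $-\lambda+\varepsilon/2<0$ the resulting series converges geometrically. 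Hence the limit defining $G(t)$ exists.

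Next I would bound $|G(t)|$ by splitting $G(t)=\int_t^{\lceil t\rceil}g\,d\omega+\sum_{k=\lceil t\rceil}^\infty\int_k^{k+1}g\,d\omega$. The first piece is controlled by Young--Loeve on $[t,\lceil t\rceil]$, while the tail sum is dominated by its leading term times a fixed geometric factor. Combining yields $|G(t)|\leq D_2\,e^{(-\lambda+\varepsilon)t}$ for some $D_2=D_2(\varepsilon)$, so that $\chi(G(t))\leq -\lambda+\varepsilon$; letting $\varepsilon\to 0^+$ gives $\chi(G(t))\leq -\lambda$. For the $q$-variation part, I would use additivity: for $s<u$ in $[t_0,t_0+1]$, $G(u)-G(s)=-\int_s^u g\,d\omega$, hence
\[
|G(u)-G(s)|\leq K\,\ltn\omega\rtn_{p{\rm-var},[s,u]}\bigl(\|g\|_{\infty,[s,u]}+\ltn g\rtn_{q{\rm-var},[s,u]}\bigr)\leq 2KD_1\,\ltn\omega\rtn_{p{\rm-var},[s,u]}\,e^{(-\lambda+\varepsilon/2)(t_0+1)}.
\]
Taking the variation seminorm and bounding $\ltn\omega\rtn_{p{\rm-var},[t_0,t_0+1]}\leq 2^{(p-1)/p}(\ltn\omega\rtn_{p{\rm-var},[n_0,n_0+1]}+\ltn\omega\rtn_{p{\rm-var},[n_0+1,n_0+2]})$ for the integer $n_0$ with $t_0\in[n_0,n_0+1]$ (as in Lemma~\ref{lemma11}), the factor $\ltn\omega\rtn_{p{\rm-var},[t_0,t_0+1]}\leq 2C_\omega\cdot 2^{(p-1)/p}(t_0+2)$ is absorbed into the exponential, giving $\ltn G\rtn_{q{\rm-var},[t_0,t_0+1]}\leq D_3\,e^{(-\lambda+\varepsilon)t_0}$ and hence $\chi(\ltn G\rtn_{q{\rm-var},[t,t+1]})\leq -\lambda$.

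The main obstacle is purely a bookkeeping one: verifying that the polynomial-growth bound on $\ltn\omega\rtn_{p{\rm-var},[k,k+1]}$ coming from (${\textbf H}_3$) is indeed killed by the strict exponential decay of $|g(k)|+\ltn g\rtn_{q{\rm-var},[k,k+1]}$, so that the tail series converges uniformly fast enough to preserve the Lyapunov exponent. No further hypothesis beyond (${\textbf H}_3$) is needed, since only the sub-exponential growth of $\ltn\omega\rtn_{p{\rm-var},[k,k+1]}$ enters; the rest is a straightforward reindexing of the argument of Lemma~\ref{lemma11} with sums running out to $+\infty$ rather than up to $\lceil t\rceil$.
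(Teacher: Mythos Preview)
Your proposal is correct and follows essentially the same route as the paper: both arguments break the tail integral into unit pieces, apply the Young--Loeve estimate on each, and kill the at-most-linear growth of $\ltn\omega\rtn_{p{\rm-var},[k,k+1]}$ (from (${\textbf H}_3$)) with the strict exponential decay of $g$, then handle the $q$-variation claim exactly as in Lemma~\ref{lemma11} via the identity $G(u)-G(s)=-\int_s^u g\,d\omega$. The only differences are cosmetic bookkeeping (you use $\varepsilon/2$ where the paper uses $\varepsilon$ and then $2\varepsilon$, and you argue absolute convergence directly rather than via the Cauchy criterion).
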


%================================
\begin{proof}
	For each $\varepsilon>0$ such that $2\varepsilon <\lambda$, there exists a constant $D_1$ such that 
	$$|g(s)|\leq D_1e^{(-\lambda+\varepsilon)s},\;\;\ltn g\rtn_{q{\rm-var},[s,s+1]}\leq D_1e^{(-\lambda+\varepsilon)s}.$$
Now fix $t_0 \geq 0$ we first prove the existence and finiteness of $\lim\limits_{n\in\N\atop n\to \infty}\int_{t_0}^ng(s)d\omega(s)$. For all $n<m\in \N $ we have	
\begin{eqnarray*}
		\Big|\int_{n}^{m}g(s)d\omega(s)\Big|&\leq &  \sum_{k=n}^{m-1}\Big|\int_{k}^{k+1}g(s)d\omega(s)\Big| \\
		&\leq &K \sum_{k=n}^{m-1} \ltn \omega\rtn_{p{\rm-var},[k,k+1]}(|g(k)|+\ltn g\rtn_{q{\rm-var},[k,k+1]})\\
		&\leq & 2KD_1\sum_{k=n}^{m-1} \ltn \omega\rtn_{p{\rm-var},[k,k+1]}e^{(-\lambda+\varepsilon)k}\\
		&\leq &2KD_1e^{(-\lambda +2\varepsilon)n} \sup_k \frac{ \ltn \omega\rtn_{p{\rm-var},[k,k+1]}}{k}\sum_{k=0}^{\infty}ke^{-\varepsilon k}  \\
		&\leq &D_2e^{(-\lambda +2\varepsilon)n}
\end{eqnarray*}
which converges to zero as $n,m \to \infty$ since $\frac{ \ltn \omega\rtn_{p{\rm-var},[k,k+1]}}{k}$ are bounded and the series $\sum_{k=0}^{\infty}ke^{-k\varepsilon/2}$ converges. Therefore $\lim\limits_{n\in\N\atop n\to \infty}\int_{t_0}^ng(s)d\omega(s)<\infty$. Moreover, for $t>t_0$  by a similar estimate we have
$$	\Big|\int_{t_0}^{t}g(s)d\omega(s)-\int_{t_0}^{{\lfloor t\rfloor}}g(s)d\omega(s)\Big|=\Big|\int_{{\lfloor t\rfloor}}^{t}g(s)d\omega(s)\Big| \to 0 $$
as $t\to \infty.$
This implies the existence of $\int_{t_0}^\infty g(s)d\omega(s) $. Moreover,  $|G(t)|\leq C(\varepsilon)e^{(-\lambda+2\varepsilon)t}$ which yields $\chi(G(t))\leq -\lambda$.\\	
	The second conclusion can be proved similarly to Lemma \ref{lemma11}.
\end{proof}
%==============================

%======================
The following lemma shows that the condition (${\textbf H}_4$) is satisfied for almost all realization $\omega$ of  a fractional Brownian motion $B^H_t(\omega)$  (see \cite{mishura} for definition and details on fractional Brownian motions).
\begin{lemma} \label{lemma2a}
	Assuming that $c_0:=\|c\|_{\infty,\R^+} < \infty$ and the integral	$$X(t,\omega) = \int_0^t c(s)dB^H_s(\omega)$$ exists for all $t\in\R^+$.
	Then
$\lim\limits_{n\to\infty\atop n\in \N} \frac{X(n,\cdot)}{n} = \lim\limits_{n\to\infty\atop n\in \N} \ \frac{\int_0^nc(s)dB^H_s}{n} = 0,\  \text{a.s.} $
\end{lemma}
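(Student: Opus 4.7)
The plan is to exploit that, for deterministic $c$, the Young integral $X(n,\omega)=\int_0^n c(s)\,dB^H_s(\omega)$ is a centered Gaussian random variable whose variance grows only like $n^{2H}$ with $H\in(1/2,1)$, so that a Gaussian tail estimate combined with the Borel--Cantelli lemma yields the claim.

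First I would establish Gaussianity of $X(n)$. Any Riemann--Stieltjes sum $\sum_i c(t_i)(B^H_{t_{i+1}}-B^H_{t_i})$ is a linear combination of fBm increments with deterministic weights, hence centered Gaussian. Since the Young integral is the almost-sure pathwise limit of such sums as the mesh $|\Pi|\to 0$ (and a.s.\ convergence implies convergence in distribution), $X(n)$ is centered Gaussian.

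Second, I would estimate $\sigma_n^2:=E[X(n)^2]$ by passing to the limit in the second moments of the approximating Riemann sums. Using the well-known off-diagonal covariance density of fBm for $H>1/2$, namely $\cov(dB^H_s,dB^H_t)=H(2H-1)|s-t|^{2H-2}\,ds\,dt$, one obtains
\[
\sigma_n^2 \;=\; H(2H-1)\int_0^n\!\!\int_0^n c(s)c(t)\,|s-t|^{2H-2}\,ds\,dt,
\]
and then the trivial bound $|c|\le c_0$ together with the elementary identity $\int_0^n\!\int_0^n|s-t|^{2H-2}\,ds\,dt = n^{2H}/[H(2H-1)]$ gives $\sigma_n^2\le c_0^2\,n^{2H}$.

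Finally, since $X(n)$ is centered Gaussian with $\sigma_n\le c_0 n^H$, the standard Gaussian tail bound yields, for each $\eps>0$,
\[
\bP\!\left(\frac{|X(n)|}{n}>\eps\right)\;\le\; 2\exp\!\left(-\frac{\eps^2 n^{2-2H}}{2c_0^2}\right),
\]
which is summable in $n$ because $H<1$ forces $2-2H>0$. Borel--Cantelli then gives $\limsup_{n\to\infty}|X(n)|/n\le\eps$ a.s., and letting $\eps\downarrow 0$ along a countable sequence yields the claim. The only technical step requiring care is the passage of second moments through the Young limit, i.e.\ justifying that the variance of the pathwise Young integral equals the stated double integral; this will follow from uniform $L^2$-boundedness of the Riemann-sum approximations (via the same covariance kernel estimate) and dominated convergence, using boundedness of $c$ and integrability of $|s-t|^{2H-2}$ on $[0,n]^2$.
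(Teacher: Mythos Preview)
Your proposal is correct and follows essentially the same route as the paper: establish that $X(n)$ is centered Gaussian as a limit of Gaussian Riemann sums, bound its variance by $c_0^2 n^{2H}$ via the covariance kernel $H(2H-1)|s-t|^{2H-2}$, and conclude by Borel--Cantelli. The only cosmetic difference is that the paper uses Chebyshev's inequality with the $2k$-th Gaussian moment (choosing $k$ large enough to make the resulting series summable), whereas you invoke the Gaussian tail bound $2\exp(-\eps^2 n^{2-2H}/(2c_0^2))$ directly; both yield summability for the same reason ($H<1$), and your version is arguably cleaner.
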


\begin{proof}
Fix $T>0$, and assume that $\pi_n$ is a sequence of partition of $[0,T]$ such that $mesh(\pi_n) \to 0$ as $n\to \infty$. Denote $$X_n(t,\omega) = \sum c(t_i) (B^H_{t_{i+1}}(\omega)-B^H_{t_i}(\omega)).$$
	Then $X_n(t,\omega) \to X(t,\omega)$ as $n\to \infty$.
	It is evident that $X_n$ is a Gaussian random variable with mean zero. Since 
	\begin{eqnarray}
	E(B^H_t-B^H_s)^2&= & |t-s|^{2H}=H(2H-1)\int_s^t\int_s^t|a-b|^{2H-2}dadb\notag\\
	E(B^H_t-B^H_s)(B^H_u-B^H_v) &= &\frac{1}{2} \left[|s-u|^{2H}+|t-v|^{2H}-|t-u|^{2H}-|s-v|^{2H}\right]\notag\\
	&=&(2H-1)H\int_v^u\int_s^t|a-b|^{2H-2}dadb
	\end{eqnarray}
	for all $v<u\leq s<t$ (see \cite[p. 7-8]{mishura}), we have
	\begin{eqnarray}
	V(X_n)= EX_n^2&= &\sum_{i,j=1}^nc(t_i)c(t_j) E(\Delta B^H_{t_i}\Delta B^H_{t_j})\notag\\
	&=&(2H-1)H\sum_i c^2(t_i)\int_{t_i}^{t_{i+1}}\int_{t_i}^{t_{i+1}}|u-v|^{2H-2}dudv \notag\\
	&+& 2(2H-1)H\sum_{i<j} c^2(t_i)\int_{t_i}^{t_{i+1}}\int_{t_j}^{t_{j+1}}|u-v|^{2H-2}dudv \notag\\
	&=& D(H)\int_0^t\int_0^tc(u)c(v)|u-v|^{2H-2}dudv \leq D(H)c_0^2 t^{2H},\notag
	\end{eqnarray}
	where $D(H)$ is a generic constant depending on $H$.
	
	Since $X_n \to X$, a.s, 
	$X(t,.)$ is a  centered normal random variable with $V(X(t,\cdot))\leq D(H)c^2_0t^{2H}$. It follows that $EX(t,.)^{2k}\leq D(H,k,c_0) t^{2kH}$ with $D(H,k,c_0)$ is a constant depending on $H,k,c_0$. 
	Fix $0<\varepsilon < 1-H$ and choose $k$ large enough so that $k(1-\varepsilon-H) \geq 1$ we then have
	\begin{eqnarray*}
	\sum_{n=1}^\infty P(|\frac{X(n,\cdot)}{n}| > \frac{1}{n^\varepsilon})&\leq & \sum_{n=1}^\infty \frac{EX(n,\cdot)^{2k} }{n^{2k(1-\varepsilon)}}\\
	&\leq & D(H,k,c_0)\sum_{n=1}^\infty  \frac{1}{n^{2k(1-\varepsilon-H)}}\\
	&\leq & D(H,k,c_0)\sum_{n=1}^\infty  \frac{1}{n^2} < \infty.
	\end{eqnarray*}
	Using Borel-Caltelli lemma, we conclude that $\frac{X(n,\cdot)}{n} \to 0$ as $n\to \infty$ almost surely.
	\end{proof}

\begin{proof}[{\bf Lemma \ref{compact}}]
	The if part is obvious since it can be proved that
	\begin{eqnarray}
	\lim \limits_{\delta \to 0} m^{[a,b]}(c,\delta)  &=& 0, \\
	|m^{[a,b]}(c,\delta) - m^{[a,b]}(c^\prime,\delta)| &\leq& \ltn c -c^\prime \rtn_{\alpha,[a,b]},  
	\end{eqnarray}
	which shows the continuity of $m$ on $\cC^{0,\alpha-\rm{Hol}}(\R,\R^k)$. Hence $m$ is uniformly continuous on a compact set, which shows \eqref{mcompact1} and \eqref{mcompact2}.\\
	To be more precise, denote by $\tilde{C}$ the space $\cC^{0,\alpha-\rm{Hol}}(\R,\R^k)$. Assume that $\cH$ is compact in $\tilde{C}$, we prove that \eqref{mcompact1} and \eqref{mcompact2} are fulfilled. For each $n\in\N^*$, put
	$$G_n = \{c\in \tilde{C}\; \mid |c(n)|< n\}.$$
	Then $G_n$ is open in $\tilde{C}$. \\
	Since  $\overline{\cH} \subset \bigcup_{n=1}^{\infty}G_n$  and $G_n$ is an increasing sequence of open sets, there exists $n_0$ such that $\cH\subset G_{n_0}$, which proves \eqref{mcompact1}.\\
	To prove \eqref{mcompact2}, first note that for each $c\in \tilde{C}$ and $[a,b] \subset \R$,
	$\lim \limits_{\delta \to 0} m^{[a,b]}(c,\delta)  =0$  (see \cite[Theorem 5.31,p. 96]{friz}). 
	Secondly
	$$|m^{[a,b]}(c,\delta) - m^{[a,b]}(c^\prime,\delta)| \leq \ltn c -c^\prime \rtn_{\alpha{\rm-Hol},[a,b]}.$$
	Indeed, due to the definition of $m^{[a,b]}(c,\delta)$, there exists for any $\varepsilon>0$ two points $s_0,t_0\in [a,b]$, $0<|s_0-t_0|\leq \delta$ such that
	$$m^{[a,b]}(c,\delta) \leq \frac{|c(t_0)-c(s_0)|}{|t_0-s_0|^\alpha}+\varepsilon.$$
	On the other hand, $m^{[a,b]}(c',\delta)\geq \frac{|c'(t_0)-c'(s_0)|}{|t_0-s_0|^\alpha}$, which yields
	\begin{eqnarray*}
		m^{[a,b]}(c,\delta) -   m^{[a,b]}(c',\delta)&\leq&   \frac{|c(t_0)-c(s_0)|- | c'(t_0)-c'(s_0) |}{|t_0-s_0|^\alpha}+\varepsilon\\
		&\leq&  \frac{|c(t_0)-c(s_0) -c'(t_0)+c'(s_0) |}{|t_0-s_0|^\alpha}+\varepsilon\\
		&\leq & \ltn c-c'\rtn_{\alpha{\rm-Hol},[a,b]}+\varepsilon
	\end{eqnarray*}
	Exchanging the role of $c$ and $c'$ we obtain
	$$|m^{[a,b]}(c,\delta) -   m^{[a,b]}(c',\delta)| \leq \ltn c-c'\rtn_{\alpha{\rm-Hol},[a,b]}$$
	since $\varepsilon $ is arbitrary.\\
	We now prove the continuity of the map
	$$m^{[a,b]}(\cdot\;,\delta): (\tilde{C},d)\rightarrow \R.$$
	In fact, fix $[-n,n]$ contains $[a,b]$. For each $c_0\in \tilde{C}$ and $\varepsilon\in (0,1)$ choose $\eta=\varepsilon/2^n$.  If $d(c,c_0)<\eta$ we have $\|c-c_0\|_{\alpha,[-n,n]}\wedge 1\leq 2^nd(c,c_0)< \varepsilon$.
	Therefore
	$$|m^{[a,b]}(c,\delta) -   m^{[a,b]}(c',\delta)|\leq  \ltn c-c'\rtn_{\alpha,[-n,n]}\leq \varepsilon.$$ 
	\\
	Next, fix $\varepsilon>0$ and  define the set $$K_{\delta}:=\{c\in \bar{A}\;\mid\; m^{[a,b]}(c,\delta) \geq \varepsilon\}.$$
	Then $K_{\delta}$ is closed for all $\delta$. Due to the fact that $\lim \limits_{\delta \to 0} m^{[a,b]}(c,\delta)  =0$  for all $c\in \tilde{C}$, we have $\displaystyle\bigcap_{\delta>0}K_{\delta}=\emptyset$. Then there exists $\delta=\delta(\varepsilon)>0$ such that $K_{\delta}=\emptyset$, which proves \eqref{mcompact2}.
	\medskip
	
	For the  ''only if'' part, assume that \eqref{mcompact1} and \eqref{mcompact2} hold, we are going to prove the compactness of $\bar{\cH}$. Since $\tilde{C}$ is a complete metric space, it suffices to prove that every sequence $\{c_n\}_{n=1}^\infty \subset \cH$ has a convergent subsequence. Following the arguments of \cite[Theorem 4.9, p. 63]{karatzas} line by line, we can construct a convergent subsequence $\{\tilde{c}_n\}_{n=1}^\infty$ by the ''diagonal sequence'' such that $\tilde{c}_n(r) \to c(r)$ as $n\to \infty$ for any rational number $r \in \Q$. With \eqref{mcompact1} and \eqref{mcompact2},  $\cH$  satisfies the condition in  \cite[Theorem 4.9, p. 63]{karatzas}, hence $\tilde{c}_n$ converge uniformly to a continuous function $c$ in every $[a,b]\subset\R$.\\
	Fix $[a,b]$, by \eqref{mcompact2} for each $\varepsilon>0$ there exist $\delta_0>0$ such that if $\delta\leq\delta_0$, $\sup_{s,t\in[a,b]\atop |s-t|\leq \delta}\frac{|\tilde{c}_n(t)-\tilde{c}_n(s)|}{|t-s|^{\alpha}}\leq \varepsilon$ for all $n$. Hence
	$$\sup_{s,t\in[a,b]\atop |s-t|\leq \delta}\frac{|c(t)-c(s)|}{|t-s|^{\alpha}}\leq \varepsilon,$$
	thus $c\in \tilde{C}$.
	Finally, we prove that $\tilde{c}_n$ converge to $c$ in the H\"older seminorm  on every compact interval $[a,b]$. Namely, 
	with $\varepsilon, \delta_0$ given, there exist $n_0$ such that for all $n\geq n_0$, $\|\tilde{c}_n-c\|_{\infty,[a,b]}\leq \delta^{\alpha}_0\varepsilon$. Then for $n\geq n_0$
	\begin{eqnarray*}
		\sup_{s,t\in[a,b]}\frac{|(\tilde{c}_n-c)(t)-(\tilde{c}_n-c)(s)|}{|t-s|^\alpha} & \leq & \sup_{s,t\in[a,b]\atop |t-s|\leq \delta_0}\frac{|(\tilde{c}_n-c)(t)-(\tilde{c}_n-c)(s)|}{|t-s|^\alpha}\\
		&&+\sup_{s,t\in[a,b]\atop |t-s|\geq \delta_0}\frac{|(\tilde{c}_n-c)(t)-(\tilde{c}_n-c)(s)|}{|t-s|^\alpha}\\
		&\leq & m^{[a,b]}(\tilde{c}_n,\delta_0)+m^{[a,b]}(c,\delta_0)+ \frac{2}{\delta^{\alpha}_0}\|\tilde{c}_n-c\|_{\infty,[a,b]}\\
		&\leq & 4\varepsilon,
	\end{eqnarray*}
	which implies that $\ltn\tilde{c}_n-c\rtn_{\alpha{\rm -Hol},[a,b]}$ converges to $0$ as $n\to\infty$. This completes the proof.
\end{proof}

%%%%%%%%%%%%%%%%%%%%%%%%%%%%%%%%%%%%%%%%%%%%%%%%%%%%%%

\end{document}